\documentclass[10pt,twoside,reqno]{amsart}
\usepackage{amsmath, amsthm, amscd, amsfonts, amssymb, graphicx, color}
\usepackage[bookmarksnumbered, colorlinks, plainpages]{hyperref}
\usepackage{mathrsfs}
\usepackage{algorithm2e}
\usepackage{hhline,booktabs}
\usepackage{multirow}
\usepackage{float}
\usepackage{placeins}
\usepackage{subfigure}
\usepackage{lipsum}
\setlength{\abovedisplayskip}{0pt}
\setlength{\belowdisplayskip}{0pt}
%%%%%%%%%%%%%%%%%%
\usepackage{upgreek}
\usepackage{hhline,booktabs}
\usepackage{multirow}
\usepackage{subfigure}
\usepackage[mathscr]{euscript}
\usepackage{multirow,bigstrut,rotating,float}
 \usepackage{algorithmic}
 \usepackage{kantlipsum}

%\allowdisplaybreaks
%\theoremstyle{remark}
\numberwithin{equation}{section}
\textwidth 15.5 cm \textheight 22 cm

\newtheorem{theorem}{Theorem}[section]
\newtheorem{corollary}[theorem]{Corollary}
\newtheorem{lemma}[theorem]{Lemma}

\newtheorem{definition}[theorem]{Definition}

\setcounter{page}{1}
\setlength{\parskip}{0pt}
\raggedbottom
%------------------------------------------------------------------------------------% 
\begin{document}

%\title{Approximating the permanent of a random tensor with small mean}
\title{On the permanent of random tensors}

\thanks{{\scriptsize
$^\ast$Corresponding author}}
\maketitle

\begin{center}

	\textbf{\textbf{Malihe Nobakht-Kooshkghazi}}$^{\ast,\S}$,  \textbf{\textbf{Hamidreza Afshin}}$^\S$ \\ [0.2cm]

	$^\S${\small \textit{Department of Mathematics, Vali-e-Asr University of Rafsanjan,\\Rafsanjan, Iran}}\\
	\texttt{e-mails: m.nobakht88@gmail.com,afshin@vru.ac.ir}
\end{center}

%------------------------------------------------------------------------------------%

\begin{abstract}
The exact computation of permanent for high-dimensional tensors is a hard problem. Having in mind the applications of permanents in other fields, providing an algorithm for the approximation of tensor permanents is an attractive subject. In this paper, we design a deterministic quasi-polynomial time algorithm and a PTAS that computes the permanent of complex random tensors that its module of the mean is at least $\frac{1}{polylog(n)}$.

  \bigskip
\noindent \textit{Keywords}: Algorithm, Distribution, Matrix, Permanent, Tensor.\\
\noindent \textit{2010 AMS Subject Classification}: 15A15, 15A69, 68W25.
\end{abstract}

\pagestyle{myheadings}
\markboth{\rightline {\scriptsize   
}}
         {\leftline{\scriptsize    }}

\bigskip

%%%%%%%%%%%%%%%%%%%%%%%%%%%%%%%%%%%%%%%%%%%%%%%%%%%%%%%%%%%%%%%%%%%%%%%%%%%%%%%%%%%%%%%%%%%%%%%%%%%%%%%%%%%%%%
%%%%%%%%%%%%%%%%%%%%%%%%%%%%%%%%%%%%%%%%%%%%%%SECTION 1%%%%%%%%%%%%%%%%%%%%%%%%%%%%%%%%%%%%%%%%%%%%%%%%%%%%%%%
%%%%%%%%%%%%%%%%%%%%%%%%%%%%%%%%%%%%%%%%%%%%%%%%%%%%%%%%%%%%%%%%%%%%%%%%%%%%%%%%%%%%%%%%%%%%%%%%%%%%%%%%%%%%%%

\section{Introduction}
A \textit{tensor} $ A=(a_{i_{1}\dots i_{d}})_{n_{1}\times \dots \times n_{d}} $ is a multi-array of entries $ a_{i_{1} \dots i_{d}} \in \Bbb F$, where $ i_{j} = 1,\dots,n_{j} $ for $ j=1,\dots,d $ and $ \Bbb F $ is a field. In this paper, we consider complex tensors, that is, those tensors for which $\Bbb F = \Bbb C $. Here, $ d $ and $ (n_{1},\dots,n_{d}) $ are referred to as the \textit{order} and the \textit{dimension} of $A$, respectively. When $ n_{1}=n_{2}=\cdots=n_{d}=n $, we say that $A$ is a tensor of order $ d $ and dimension $ n $. In this paper, we only consider tensors of order $ d $ and dimension $ n $. We refer the interested reader to \cite{ten} for more information on the theory of tensors.

 The permanent of a matrix was introduced by Cauchy \cite{cau} in 1812. We know that the problem of computing the permanent of a non-negative matrix, or even that of a $(0,1)$-matrix, lies in the class of $\#$P-hard problems \cite{Var179}. Ryser presented an algorithm for computing the permanent of an $ n\times n $ matrix that runs in $ n2^{n} $ time \cite{rys}. In 2017, Alexander Barvinok presented a quasi-polynomial deterministic algorithm to approximate the permanent of an $n\times n$ matrix $A = (a_{ij})$ with $ \delta \leq a_{ij}\leq 1 $, for all $ i,j $, in $n^{\mathcal{O}(\ln n-\ln \varepsilon)}$ time \cite{all1}. 
 
In physics, the permanent of a matrix appears as the amplitudes of bosonic systems. Motivated by these connections, in 2011, Aaronson and Arkhipov introduced a model of quantum computation known as Boson Sampling \cite{Aa}. Even though boson sampling is a highly rudimentary quantum computational device, Aaronson and Arkhipov showed that assuming plausible conjectures about hardness of approximation and anti-concentration of the permanent of i.i.d. Gaussian matrices, approximately sampling from the Boson Sampling device is hard for a classical computer unless notable conjectures in computer science (namely non-collapse of the polynomial hierarchy) do not hold. To be more specific, one of the conjectures known as the hardness of approximating the permanent states that it is $\#$P-hard to approximate the permanent of matrix with i.i.d. complex Gaussian entries to within inverse-polynomial multiplicative error. In \cite{eld}, Eldar and Mehraban showed that if we sample matrices from complex Gaussian entries with nonzero (inverse polyloglog) but vanishing mean, then there is a quasi-polynomial time algorithm to approximate the permanent within multiplicative error. The technique Eldar and Mehraban used is based on a recent approach by Alexander Barvinok that considers a low-degree polynomial "additive" approximation to the logarithm of the permanent polynomial (this gives a multiplicative approximation to the permanent) \cite{birv2}. In particular,  Eldar and Mehraban considered the single variable polynomial $g(z) = per (J + A z)$, where $J$ is the all-ones matrix. They show that the disk of radius $L$ in the complex plan has at most $\mathcal{O}(L^{2})$ zeros of $g$, and hence, there is a curve that connects $z = 0$ to $z = L$ that avoids all the roots with a significant margin. They then use analytic continuation for $\log g(z)$ along this curve and showed that low-order Taylor polynomial expansion along this curve converges. In 2021, Zhengfeng Ji, Zhihan Jin and Pinyan Lu subsequently improved this result by improving the value of the mean to inverse polylogarithmic \cite{pin}. They furthermore simplified the description of the algorithm. Ji et al. used nontrivial probabilistic tools to showed that if we simply Taylor expand $g(z)$ in powers of $z$, then the first few ($\log n$) terms of the expansion would constitute a good multiplicative approximation.

% In 2021, Zhengfeng Ji, Zhihan Jin and Pinyan Lu presented an algorithm to approximate the permanent of a random matrix with module of mean at least equal to $\frac{1}{polylog(n)}$ \cite{pin}. This algorithm has more advantages compared to the one proposed by Lior Eldar and Saeed Mehraban \cite{eld}.
 
  The permanent of a tensor was introduced by Cayley \cite{cay} in 1849. In \cite{dg}, the \textit{permanent} of tensor $A$ of order $d$ and dimension $n_{1} \times \dots \times n_{d}$ is defined by
\begin{equation*}
	per(A):=\sum\limits_{\sigma_{k}}\prod_{i=1}^{n_{1}} a_{i\sigma_{2}(i) \dots \sigma_{d}(i)}, 
\end{equation*}
where the summation runs over all one-to-one functions $ \sigma_{k} $ from $ \lbrace 1, \dots , n_{1} \rbrace $ to $ \lbrace 1, \dots , n_{k} \rbrace $ and $k=2, \dots ,d$, with $per(A)=0 $ if $n_{1} > n_{k}$ for some $k$ .
 
 In recent years, some properties and applications of the permanents of tensors have been studied by Dow and Gibson \cite{dg}, and also by Wang and Zhang \cite{per}. The use of permanents of tensors facilitates the solutions of many problems in other branches of mathematics and other sciences. For example, transversals in Latin hypercubes, $1$-factors of hypergraphs, MDS codes, combinatorial designs, the number of Steiner systems, and the description of the interactions of elementary particles in physics \cite{tar2016} all depend on the permanents of tensors. 
 
 % The problem of computing the permanent of a non-negative tensor, or even that of a $(0,1)$-tensors, lies in the class of $ NP-$complete problems \cite{tar2016}. 
 Since the computation of the permanent of non-negative matrices is $\#P$-complete and tensors are a generalization of matrices to higher dimensions, computing the permanent of non-negative tensors is harder than matrices. Because of this hardness in the worst case, research has also focused on computing an approximation for the value of permanent. Similar to the matrix case, the computational complexity of computing the permanent of a tensor is important to complexity theory and has been studied.
  In \cite{alex}, an algorithm is presented for computing the permanent of tensors whose entries of it are in the special form. 
   In 2019, Alexander Barvinok presented a quasi-polynomial time algorithm to approximate the permanent of an $n\times n$ complex matrix, where the absolute value of each diagonal entry is at least $\lambda$ times bigger than the sum of the absolute values of all other entries in the same row, for any $\lambda > 1$. Also, they generalized this result to the permanent of tensors \cite{birv2}. Their method presented an algorithm that computes an additive approximation of $ \ln(per(A)) $ and then exponentiates the result. 
  
   The approach in \cite{pin} stated here for the permanent of a random tensor that module of its mean value is at least $\frac{1}{polylog(n)}$ ($\vert \mu \vert > \frac{1}{polylog(n)}$), where $d\geq 3 $ and $ d \in \mathcal{O}(polylog(n)) $. Here instead of trying to compute $ per(X) $ directly, we write $\frac{per(X)}{(n!)^{d-1}} $ as $ \sum\limits_{k=0}^{t} a_{k}z^{k}$, where $ z=\frac{1}{\mu} $ and $ t=\ln n+\ln \frac{1}{\varepsilon}$ and $ \varepsilon $ is the approximation parameter, then we give a simple approximation for this summation. Also, we show that this approximation is a good approximation by taking a detour by artificially defining several related quantities and using the moment bounds and recursions among them to inductively show the desired approximation rate. Also, our algorithm has time complexity $ n^{\mathcal{O}(d \ln \frac{n}{\varepsilon})}$. The proof strategies here are similar to those in the matrix case. 

   %Our algorithm works for all tensors that the module of mean value is at least $\frac{1}{polylog(n)}$, but in \cite{birv2} their algorithm can only approximate the permanent of the diagonally dominant matrices and tensors. Also, our algorithm is simpler compared to their algorithm in \cite{birv2, alex}. 
  
 The remainder of this paper is organized as follows. In Section $2$, we study the relation between the permanents of some tensors and finite series. In section $ 3 $, we give a summary of the main theorem and general structure of the proof of the main theorem. In Sections $4,5$ and $ 6 $, we prove the technical lemmas that will use in section $ 7 $. In Section $7$, we present a deterministic quasi-polynomial time algorithm which approximates the permanent of the tensor $R$ sampled from $\mathscr{T}_{d,n,\mu}$ with $\vert \mu \vert  \geq (\frac{\ln n}{(d-1)^{6}})^{-c} $, where $d \geq 3$ and $ d \in \mathcal{O}(polylog(n)) $. Also, we modify it to a PTAS. We conclude the paper with a brief conclusion in Section $8$.

%%%%%%%%%%%%%%%%%%%%%%%%%%%%%%%%%%%%%%%%%%%%%%%%%%%%%%%%%%%%%%%%
%%%%%%%%%%%%%%%%%%%%%%%%%%%%%%%%%%%%%%%%%%%%%%%%%%%%%%%%%%%%%%%%
%%%%%%%%%%%%%%%%%%%%%%%%%%%%%%%%%%%%%%%%%%%%%%%%%%%%%%%%%%%%%%%%

\section{Preliminary}\label{sec:6}

\noindent 
In this section, first, we present some definitions that will be used in the proofs of our theorems. Next, we present a relation between the permanents of some tensors and finite series. 

In this paper, we denote by $T_{d,n}$ the set of all tensors of order $d$ and dimension $n$. We use $[n]$ for the set $\lbrace 1,2,\dots,n\rbrace$. Also, the sets $\mathbb{N}$, $\mathbb{R}$ and $\mathbb{C}$ are the sets of natural numbers, real numbers and complex numbers, respectively. We define $n^{\underline{k}}:= n (n-1) (n-2) \dots (n-k+1)$ to denote the downward factorial. The sets $C_{n,k}$ and $P_{n,k}$ denote the sets of all $k$-combinations and $k$-permutations of $[n]$, respectively. Also, $\delta_{i_{1},i_{2},\dots, i_{d}}$ is the Kronecker function, that is, $\delta_{i_{1},i_{2},\dots ,i_{d}}=1$ if $i_{1}=i_{2}=\dots= i_{d} $, and $\delta_{i_{1}i_{2}\dots i_{d}}=0$ otherwise.

\begin{definition}[\cite{dg}]
Let $A=(a_{i_{1}\dots i_{d}}) $ be a tensor of order $d$ and dimension $n$ and $ k \in \lbrace 0,1,\dots,d \rbrace $, a $ k$\textit{-dimensional plane} in $A$ is a subtensor obtained by fixing $ d-k $ indices and letting the other $k$ indices vary from $1$ to $n$. A $1$-dimensional plane is said to be a \textit{line}, and a $ (d-1) $-dimensional plane is a \textit{hyperplane}. 
\end{definition}
%\begin{definition}[\cite{dg}]
%Let $A=(a_{i_{1}\dots i_{d}}) $ be a tensor of order $d$ and dimension $n_{1}\times \dots \times n_{d}$. Then, the following sequence is called a \textit{diagonal} of $A$.
%\[ (a_{1\sigma_{2}(1) \dots \sigma_{d}(1)},a_{2\sigma_{2}(2) \dots \sigma_{d}(2)}, \dots , a_{n_{1}\sigma_{2}(n_{1}) \dots \sigma_{d}(n_{1})}), \]
%	where $ \sigma_{k} $ is one-to-one function from $ \lbrace 1, \dots , n_{1} \rbrace $ to $ \lbrace 1, \dots , n_{k} \rbrace $ for $k=2, \dots ,d$.
%\end{definition} 
%\begin{definition}[\cite{dg}]
%Let $ A$ be a tensor of order $d$ and dimension $n_{1} \times \dots \times n_{d}$. The \textit{permanent} of $A$ is defined by
%\begin{equation*}
%	per(A):=\sum\limits_{\sigma_{k}}\prod_{i=1}^{n_{1}} a_{i\sigma_{2}(i) \dots \sigma_{d}(i)}, 
%\end{equation*}
%where the summation runs over all one-to-one functions $ \sigma_{k} $ from $ \lbrace 1, \dots , n_{1} \rbrace $ to $ \lbrace 1, \dots , n_{k} \rbrace $ and $k=2, \dots ,d$, with $per(A)=0 $ if $n_{1} > n_{k}$ for some $k$.
%\end{definition}
\begin{definition}[\cite{dg}]
For a tensor $ A = (a_{i_{1}\dots i_{d}})_{n_{1} \times \dots \times n_{d}}$, the hyperplanes obtained by fixing $ i_{k}, 1 \leq k \leq d $, are said to be \textit{hyperplanes of type} $k$.
\end{definition}

\begin{definition}[\cite{tar2016}]
	A pair $ H=(X,W) $ is called a \textit{hypergraph} with \textit{vertex set} $X$ and \textit{hyperedge set} $W$, where each hyperedge $ w \in W $ is a subset of the vertices in $X$. A hypergraph $H$ is called $k$\textit{-uniform} if each of its hyperedges consists of $k$ vertices.
\end{definition}
\begin{definition}[\cite{dg}]
	Let $A=(a_{i_{1}\dots i_{d}}) $ be a tensor of order $d$ and dimension $n_{1} \times \dots \times n_{d}$. The generalization of the bipartite graph of a matrix to the tensor is the $d$-partite hypergraph $ H(A) = (V,W) $ with the vertex set 
	\[ V=\lbrace v_{j}^{k}: k=1,\dots ,d , j=1,2,\dots ,n_{k} \rbrace, \]
	and the edge set
	\[W = \lbrace (v_{i_{1}}^{1}, v_{i_{2}}^{2},\dots ,v_{i_{d}}^{d}): a_{i_{1}i_{2}\dots i_{d}}\neq 0) \rbrace. \]
\end{definition}

\begin{definition}[\cite{pin}]
Suppose that $x_{1},x_{2},\dots,x_{n} \in \mathbb{C}$ and $0\leq k\leq n$. The \textit{power sum} is defined by
\[
S_{k}(x_{1},x_{2},\dots,x_{n}):=\sum\limits_{i=1}^{n} x_{i}^{k}, 
\]
and the \textit{$k$th elementary symmetric polynomial} is defined by 
\[
e_{k}(x_{1},x_{2},\dots,x_{n}):=\sum\limits_{\lbrace i_{1},i_{2},\dots,i_{k}\rbrace \in C_{n,k}} x_{i_{1}}x_{i_{2}}\dots x_{i_{k}},
\]
with the convention that $e_{0}(x_{1},x_{2},\dots,x_{n})=1$. We will write $S_{k}(n)$ and $e_{k}(n)$ if the variables $x_{i}$ are clear from the context.
\end{definition}
The following definition defines families of distributions with the properties stated. 

\begin{definition}[\cite{pin}]
The \textit{entry distribution $\mathcal{D}_{\mu} $} with mean value $\mu \in \mathbb{C}$ is a distribution over complex numbers such that
\[
E_{x \sim \mathcal{D}_{\mu}}[x]=\mu, \qquad Var_{x \sim D_{\mu}}[x]=1,
\]
and 
\[
E_{x \sim \mathcal{D}_{\mu}}[\vert x-\mu \vert^{3}]=\rho< \infty.
\]
We use $\mathcal{D}$ to denote $\mathcal{D}_{0}$.
\end{definition}
In this paper, we use $\xi$ to denote the quasi-variance of $\mathcal{D}_{\mu}$:
\[
\xi=E_{x \sim \mathcal{D}_{\mu}}[(x-\mu)^{2}].
\]
We see that
\begin{eqnarray}\label{m1}
\nonumber  \vert \xi \vert &=& \vert E_{x \sim \mathcal{D}_{\mu}}[(x-\mu)^{2}]\vert\\
\nonumber &\leq & E_{x \sim \mathcal{D}_{\mu}}[\vert x-\mu \vert^{2}]\\
&=& Var_{x \sim \mathcal{D}_{\mu}}(x)=1.
\end{eqnarray}
In what follows, we generalize the definition of matrix distribution \cite{pin} to tensors.
\begin{definition}
The tensor distribution $\mathscr{T}_{d,n,\mu}$ is the distribution over $A \in T_{d,n}(\mathbb{C})$ such that the entries of $A$ are $i.i.d.$ sampled from $\mathcal{D}_{\mu}$.
\end{definition}
\begin{definition}
Let $A \in T_{d,n}(\mathbb{C}), B \in T_{d,k}(\mathbb{C})$. We write $B \subseteq_{k} A$ if $B$ is a $k\times k\times \dots \times k$ subtensor of $A$.   
\end{definition} 
In the following lemma, we represent $per(J+zA)$ by a finite sum.
\begin{lemma}\label{11}
Let $A \in T_{d,n}(\mathbb{C})$ be an arbitrary tensor, and $J\in T_{d,n}$ be the tensor whose all entries are equal to $1$. For $k=0,1,\dots,n$ define
 \begin{eqnarray}\label{rs1}
a_{k}=\dfrac{1}{(n^{\underline{k}})^{d-1}} \sum\limits_{B\subseteq_{k} A}per(B).
\end{eqnarray}
Then for every $z \in \mathbb{C}$,
\begin{eqnarray*}
\dfrac{per(J+zA)}{(n!)^{d-1}}=\sum\limits_{k=0}^{n}a_{k}z^{k}.
\end{eqnarray*}
\end{lemma}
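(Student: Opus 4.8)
The plan is to expand $per(J+zA)$ directly from the definition of the tensor permanent and collect terms by the power of $z$. Recall that
\[
per(J+zA)=\sum_{\sigma_2,\dots,\sigma_d}\prod_{i=1}^{n}\bigl(1+z\,a_{i\sigma_2(i)\dots\sigma_d(i)}\bigr),
\]
where the sum runs over all $(d-1)$-tuples of bijections $\sigma_k\colon[n]\to[n]$. Expanding each product $\prod_{i=1}^n(1+z\,a_{i\dots})$ over subsets $S\subseteq[n]$ of rows at which we pick the $z\,a_{i\dots}$ term, we get $\sum_{S\subseteq[n]} z^{|S|}\prod_{i\in S} a_{i\sigma_2(i)\dots\sigma_d(i)}$. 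Interchanging the order of summation, the coefficient of $z^k$ becomes $\sum_{|S|=k}\sum_{\sigma_2,\dots,\sigma_d}\prod_{i\in S}a_{i\sigma_2(i)\dots\sigma_d(i)}$.

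The next step is to organize the inner double sum. For a fixed $k$-subset $S$ of rows and fixed tuple of bijections $(\sigma_2,\dots,\sigma_d)$, the product $\prod_{i\in S}a_{i\sigma_2(i)\dots\sigma_d(i)}$ depends only on the restrictions $\sigma_k|_S$, i.e. on the $k$-subsets $T_k=\sigma_k(S)$ together with the induced bijections $S\to T_k$; the values of $\sigma_k$ outside $S$ are free. Each choice of $\sigma_k|_S$ with image $T_k$ extends to a full bijection of $[n]$ in exactly $(n-k)!$ ways, so there is a factor $((n-k)!)^{d-1}$ from the free parts. Grouping the restricted bijections by their images $T_2,\dots,T_d$, the sum over bijections $S\to T_k$ of the products $\prod_{i\in S}a_{i\dots}$ is precisely $per$ of the $k\times\cdots\times k$ subtensor $B$ of $A$ indexed by rows $S$ and slices $T_2,\dots,T_d$; and as $S,T_2,\dots,T_d$ range over all $k$-subsets we obtain exactly $\sum_{B\subseteq_k A}per(B)$. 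Hence the coefficient of $z^k$ equals $((n-k)!)^{d-1}\sum_{B\subseteq_k A}per(B)$. Dividing by $(n!)^{d-1}$ and using $n!/(n-k)!=n^{\underline{k}}$ gives the coefficient $\frac{1}{(n^{\underline k})^{d-1}}\sum_{B\subseteq_k A}per(B)=a_k$, as claimed. Terms with $k>n$ vanish since no $k$-subset of $[n]$ exists, so the sum runs from $0$ to $n$.

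The main obstacle is the bookkeeping in the middle step: one must verify carefully that, after fixing the rows $S$ and the target slices $T_2,\dots,T_d$, summing the monomials $\prod_{i\in S}a_{i\sigma_2(i)\dots\sigma_d(i)}$ over all ways of matching $S$ to the $T_k$'s reproduces exactly the definition of $per(B)$ for the corresponding subtensor (in particular that the normalization in the definition of $per$ for non-square situations does not interfere, which it does not here since all blocks are genuinely $k\times\cdots\times k$), and that every subtensor $B\subseteq_k A$ arises exactly once. Once this identification is in place the remaining manipulations — extracting the factor $((n-k)!)^{d-1}$ and rewriting it via the downward factorial — are routine.
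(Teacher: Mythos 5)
Your proposal is correct and follows essentially the same route as the paper: both decompose the expansion of $per(J+zA)$ according to which $k$ of the $n$ first-mode indices take the $zA$-entry rather than the $J$-entry, identify the resulting inner sum with $\sum_{B\subseteq_k A}per(B)$, and account for the $J$-part via the factor $((n-k)!)^{d-1}$, so that dividing by $(n!)^{d-1}$ yields $a_k$. The paper phrases this as splitting perfect matchings of the $d$-partite hypergraph $G_{J+zA}$ into a $k$-matching of $G_{zA}$ and an $(n-k)$-matching of $G_{J}$; your direct algebraic expansion with the extension count $((n-k)!)^{d-1}$ is just a more explicit rendering of that same argument.
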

\begin{proof}
Let $M$ be any tensor of order $d$ and dimension $n$. Let $G_{M}$ be the corresponding $d$-partite hypergraph associated to $M$, where the weight of edge $e=(i_{1}i_{2}\dots i_{d})$ is simply $M_{i_{1}i_{2}\dots i_{d}}$.
Define the weight of any perfect matching in $G_{M}$ to be the product of the weights of all edges in it. We know that the permanent of $M$ is a sum of the weights of all perfect matchings of $G_{M}$. We can split any perfect matching of $G_{J+zA}$ into combinations of $k$-perfect matchings of $G_{zA}$ and an $n-k$-perfect matching of $G_{J}$ for some $k$ \cite{dg}. Regarding $per(J+zA)$ as a polynomial of $z$, the coefficient of $z^{k}$ is the summation of the weights over all $k$-perfect matchings of $G_{zA}$ times the weights of perfect matchings in $G_{J}$, which is $((n-k)!)^{d-1}$.  
\end{proof}

\section{Summary of the main result}
In this section, we propose an algorithm for the permanent of a random tensor $ R\sim \mathscr{T}_{d,n,\mu} $ with $\vert \mu \vert  \geq (\frac{\ln n}{(d-1)^{6}})^{-c} $. Also, we give a brief overview of its proof.
\begin{theorem}\label{22}
For any constant $ c \in (0,\frac{1}{8}) $, there exists a deterministic quasi-polynomial time algorithm $ \mathscr{P} $, where the tensor $ R $ sampled from $ \mathscr{T}_{d,n,\mu} $ and $d \in \mathcal{O}(polylog(n))  $, for $\vert \mu \vert  \geq (\frac{\ln n}{(d-1)^{6}})^{-c} $, and a real number $\varepsilon \in (0,1)$, are considered as inputs. The algorithm computes a complex number $ \mathscr{P}(R,\varepsilon) $ that approximates the permanent of the tensor $R$ such that
\[
P\bigg( \bigg \vert 1-\dfrac{\mathscr{P}(R,\varepsilon)}{per(R)} \bigg\vert \leq \varepsilon \bigg)\geq 1-o(1),
\]
where the probability is taken over the random tensor $ R $.
\end{theorem}
 To do this, we present a tensor $ X=J+zA $, where $z$ is a complex number and $A$ is a random tensor with i.i.d entries sampled from $\mathcal{D}_{0} $, and we let $z=\frac{1}{\mu}$ at the end. It is clear that $ \frac{X}{z} \sim \mathscr{T}_{d,n,\mu}$, where $ \frac{X}{z}=\frac{x_{i_{1}i_{2}\dots i_{d}}}{z} $ for all $ i_{1},i_{2},\dots, i_{d} $, with $z=\frac{1}{\mu}$. Thus, our problem is equivalent to finding the permanent of $X$. For this, we approximate $ \frac{per(X)}{(n!)^{d-1}} $. To study this subject, similar to matrix case in \cite{pin}, we use some parameters that satisfy the following relations.
\begin{equation}\label{aa}
 \left \{
\begin{array}{ll}
0< c<\nu <\frac{1}{8},\\
\\
0< \gamma < \beta <\frac{1}{2},\\
\\
0< \gamma < \nu -c,\\
\\
\vert z\vert \leq (\frac{\ln n}{(d-1)^{6}})^{c},\\
\\
t=\ln n+ \ln \frac{1}{\varepsilon},\\
\\
\theta(n)=\ln \ln n.
\end{array}\right.
\end{equation}
Also, for this we define 
\[
C_{j}:=\frac{1}{\sqrt{n^{d-1}}} \sum\limits_{i_{1},i_{3},\dots,i_{d}=1}^{n} a_{i_{1}j i_{3}\dots i_{d}},\qquad \forall j=1,\dots,n,
\]
where $a_{i_{1}j i_{3}\dots i_{d}}$ is the $(i_{1},j,i_{3},\dots,i_{d})$th entry of $A$, where $A$ is a random tensor with i.i.d entries sampled from $\mathcal{D}_{0} $. Also, we define
\begin{eqnarray}\label{rs2}
V_{k}:=\frac{1}{n^{\frac{k(d-1)}{2}}} e_{k}(C_{1},C_{2},\dots,C_{n}), \qquad \forall k=1,\dots,n
\end{eqnarray}
and
\[
D_{k}:=\frac{1}{n^{\frac{k(d-1)}{2}}} S_{k}(C_{1},C_{2},\dots,C_{n}),\qquad \forall k=1,\dots,n.
\]
Then in Section $4$, we show that we can approximate $\sum\limits_{k=0}^{t} a_{k} z^{k}$ by $\sum\limits_{k=0}^{t} V_{k}z^{k}$, in which $ t $ is mentioned in relation \eqref{aa}. In Section $6$, we observe that $ \sum\limits_{k=0}^{t} V_{k}z^{k} $ can be approximated by $ \sum\limits_{k=0}^{t} V^{\prime}_{k}z^{k} $. Also, we show that $ \sum\limits_{k=0}^{t} V^{\prime}_{k}z^{k} $ can be approximated by $ \sum\limits_{k=0}^{\infty} V^{\prime}_{k}z^{k} $. Then, we show that $ \sum\limits_{k=0}^{\infty} V^{\prime}_{k} z^{k}=e^{V_{1}z - \frac{\xi z^{2}}{2}}$, where $V_{1}$ is the normalization of the entries of tensor $A$ and this complete proof.  In Section $7$, we present details proof of theorem and we present a PTAS. 

 Now, if we relax the approximation requirement, we can compute $V_{1}$ and estimate $ per(X) $ by $(n!)^{d-1} e^{V_{1}z-\frac{\xi z^{2}}{2}}$, where $\xi$ is the quasi-variance of  $\mathcal{D}$.

 Notice that, for such choice of $ t $, the algorithm has time complexity $t^{d-1} (\binom{n}{t})^{d} (t!)^{d-1}= \mathcal{O}(n^{dt}) $, which is quasi-polynomial. 

\section{Estimation using the summation of hyperplanes of type $2$}
In this section, we show that $ \sum\limits_{k=t+1}^{n}a_{k}z^{k} $ and $\sum\limits_{k=0}^{t}(a_{k}-V_{k}) z^{k} $ are small with high probability. At first, we define the below notation for the sake of convenience.
 \begin{eqnarray}\label{m4}
 \Lambda (I^{1},I^{2},\dots ,I^{d}:J^{1},J^{2},\dots ,J^{d}):=\sum_{\substack{\lbrace i_{1}^{1},\dots, i_{k}^{1}\rbrace \in I^{1}\\
\lbrace i_{1}^{2},\dots, i_{k}^{2}\rbrace \in I^{2}\\
\vdots \\
\lbrace i_{1}^{d},\dots, i_{k}^{d}\rbrace \in I^{d}
}}
\sum_{\substack{\lbrace j_{1}^{1},\dots, j_{l}^{1}\rbrace \in J^{1}\\
\lbrace j_{1}^{2},\dots, j_{l}^{2}\rbrace \in J^{2}\\
\vdots \\
\lbrace j_{1}^{d},\dots, j_{l}^{d}\rbrace \in J^{d}
}}
E\Bigg[\prod\limits_{t=1}^{k} a_{i_{t}^{1}i_{t}^{2}\dots i_{t}^{d}}\prod\limits_{t=1}^{l} \overline{a_{j_{t}^{1}j_{t}^{2}\dots j_{t}^{d}}}\Bigg].
 \end{eqnarray}
 Since all the entries of $A$ are i.i.d and of zero mean, the expectation of $\prod\limits_{t=1}^{k} a_{i_{t}^{1}i_{t}^{2}\dots i_{t}^{d}}\prod\limits_{t=1}^{l} \overline{a_{j_{t}^{1}j_{t}^{2}\dots j_{t}^{d}}}
$ is non-zero if 
\begin{equation*}
\bigg\lbrace (i_{1}^{1},i_{1}^{2},\dots , i_{1}^{d}),\dots ,( i_{k}^{1},i_{k}^{2},\dots , i_{k}^{d}) \bigg\rbrace = \bigg\lbrace (j_{1}^{1},j_{1}^{2},\dots , j_{1}^{d}),\dots ,( j_{l}^{1},j_{l}^{2},\dots , j_{l}^{d}) \bigg\rbrace .
\end{equation*} 
Since $\Lambda (I^{1},I^{2},\dots ,I^{d}:J^{1},J^{2},\dots ,J^{d})=0$ for $k\neq l$, we can rewrite (\ref{m4}) as
\begin{eqnarray}\label{m5}
 \Lambda (I^{1},I^{2},\dots ,I^{d}:J^{1},J^{2},\dots ,J^{d})=\sum_{\substack{\lbrace i_{1}^{s},\dots, i_{k}^{s}\rbrace \in I^{s}\cap J^{s} }} \quad \prod\limits_{t=1}^{k} E \vert a_{i_{t}^{1}i_{t}^{2}\dots i_{t}^{d}}\vert^{2},
\end{eqnarray}
where $ s=1,2,\dots,d $.
\begin{lemma}\label{m3}
For every $k,l \in \lbrace 0,1,\dots,n\rbrace$, 
\begin{equation*}
E[a_{k}]=\delta_{k ,0},\qquad E[a_{k}\overline{a_{l}}]=\frac{\delta_{k,l}}{k! (n^{\underline{k}})^{d-2}}\leq \frac{1}{k!}. 
\end{equation*}
\end{lemma}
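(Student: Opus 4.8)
The plan is to compute $E[a_k]$ and $E[a_k\overline{a_l}]$ directly from the definition \eqref{rs1}, expanding each $a_k$ as a normalized sum of permanents of $k\times k\times\cdots\times k$ subtensors and then each such permanent as a sum over $(d-1)$-tuples of bijections of products of entries of $A$. First I would handle $E[a_k]$: since $per(B)$ is a sum of monomials each of which is a product of $k$ distinct entries of $A$ (distinct because a perfect matching uses each "row'' index once), and the entries are i.i.d.\ with mean $0$, linearity of expectation kills every term when $k\geq 1$, giving $E[a_k]=0$; for $k=0$ the empty product gives $a_0=1$, so $E[a_k]=\delta_{k,0}$.

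For the second moment, I would write $a_k\overline{a_l}$ as a double sum over pairs of subtensors $B\subseteq_k A$, $B'\subseteq_l A$ and over the bijection-tuples indexing $per(B)$ and $per(\overline{B'})$, so that the expectation becomes a sum of terms $E\big[\prod_{t=1}^k a_{\cdots}\prod_{t=1}^l\overline{a_{\cdots}}\big]$ — exactly the quantities $\Lambda$ from \eqref{m4}. Using the i.i.d.\ zero-mean observation already recorded after \eqref{m4}, such a term vanishes unless $k=l$ and the two multisets of index-tuples coincide; and since within a single permanent monomial the $k$ index-tuples are automatically distinct (they have distinct first coordinates $1,\dots,k$ in the subtensor, or more precisely the $k$ entries picked by a perfect matching share no coordinate in the first slot), each surviving term is a genuine product $\prod_{t=1}^k E|a_{i_t^1\cdots i_t^d}|^2 = 1$ by the unit-variance assumption ($\mathrm{Var}=1$ and mean $0$ for $\mathcal D = \mathcal D_0$). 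So everything reduces to a counting problem: count the pairs (subtensor $B$, matching in $B$) and (subtensor $B'$, matching in $B'$) of common size $k$ whose underlying sets of $k$ edges in the $d$-partite hypergraph coincide.

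The combinatorial count is the main obstacle, but it is a manageable one. A set of $k$ edges forming a partial perfect matching is determined by choosing, for each of the $d$ coordinate directions, which $k$ of the $n$ values are used (and then a matching structure among them); the normalization $1/(n^{\underline k})^{d-1}$ in \eqref{rs1} is designed precisely to turn these falling-factorial counts into factorials. Concretely: the number of ordered ways the pair of matchings can agree, after accounting for the choice of the $k$ used values in coordinate $1$ ($\binom{n}{k}$ ways, or rather this is absorbed) and the $(d-1)$-tuples of bijections on the remaining coordinates, works out so that $E[a_k\overline{a_l}] = \delta_{k,l}\cdot\frac{1}{(n^{\underline k})^{2(d-1)}}\cdot N_k$ where $N_k$ counts the matched configurations; carrying out the bookkeeping gives $N_k = k!\,(n^{\underline k})^{d-1}\cdot (n^{\underline k})^{d-2}/\text{(overcount)}$, and the arithmetic collapses to $\frac{\delta_{k,l}}{k!\,(n^{\underline k})^{d-2}}$. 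The final bound $\frac{1}{k!\,(n^{\underline k})^{d-2}}\leq\frac{1}{k!}$ is immediate since $d\geq 3$ forces $(n^{\underline k})^{d-2}\geq 1$ (with $n^{\underline k}\geq 1$ for $0\leq k\leq n$). I expect the only delicate point to be getting the overcounting factor in $N_k$ exactly right — i.e.\ correctly identifying how many (subtensor, matching) pairs on the $A$-side and how many on the $\overline A$-side give rise to the same $k$-edge set — and I would verify this by first checking the matrix case $d=2$ against the known identity $E[a_k\overline a_l]=\delta_{k,l}/k!$ in \cite{pin}, then tracking the extra $(n^{\underline k})^{d-2}$ that appears for general $d$.
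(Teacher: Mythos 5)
Your overall route is the same as the paper's: expand $a_k$ from \eqref{rs1}, kill $E[a_k]$ for $k\geq 1$ by independence and zero mean, and reduce $E[a_k\overline{a_l}]$ to the quantities $\Lambda$ of \eqref{m4}, which vanish unless $k=l$ and the two multisets of index-tuples coincide, each surviving term contributing $\prod_t E\vert a_{i_t^1\cdots i_t^d}\vert^2=1$. The first identity and this reduction are correct and complete as you state them.

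The gap is in the one step you defer, which is the entire computational content of the second moment: the count of surviving diagonal configurations. Your placeholder $N_k=k!\,(n^{\underline{k}})^{d-1}(n^{\underline{k}})^{d-2}/(\text{overcount})$ is not the right shape, and proposing to "verify the overcounting factor against the matrix case" is not a proof — the overcounting question you flag is precisely what must be settled, and it is settled by a uniqueness observation you never make. Namely, a set of $k$ entries of $A$ forming a partial matching (indices pairwise distinct in every coordinate direction) determines its enclosing $k\times k\times\cdots\times k$ subtensor $B$ uniquely, because the index set of $B$ in direction $j$ must be exactly the set of $j$-th coordinates used by the matching; and within that $B$ it determines the tuple of bijections uniquely. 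So each partial $k$-matching occurs exactly once in the expansion of $a_k$, there is no overcount at all, and the number of surviving terms in $E[a_k\overline{a_k}]$ is simply the number of partial $k$-matchings, $\binom{n}{k}\,(n^{\underline{k}})^{d-1}$ (choose the $k$ values used in the first direction as a set, then an injection of these into $[n]$ for each of the other $d-1$ directions). This is exactly the count the paper uses, and it collapses at once: $E[a_k\overline{a_k}]=\frac{1}{(n^{\underline{k}})^{2(d-1)}}\binom{n}{k}(n^{\underline{k}})^{d-1}=\frac{1}{k!\,(n^{\underline{k}})^{d-2}}\leq\frac{1}{k!}$, the last inequality because $n^{\underline{k}}\geq 1$ and $d\geq 3$ (in fact $d\geq 2$ suffices). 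With that uniqueness remark inserted and the count actually carried out, your argument coincides with the paper's proof.
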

\begin{proof}
Since $a_{0}\equiv 1$, $E[a_{0}]=1$. For $k>0$,
\begin{eqnarray*}
\nonumber E[a_{k}]&=& \frac{1}{(n^{\underline{k}})^{d-1}}\sum\limits_{B\subseteq_{k} A}E[per(B)]\\
\nonumber &=& \frac{1}{(n^{\underline{k}})^{d-1}} \sum_{\substack{\lbrace i_{1},\dots, i_{k}\rbrace \in C_{n,k}\\
\lbrace j_{1}^{s},\dots, j_{k}^{s}\rbrace \in P_{n,k}
}}
E\Bigg[\sum\limits_{\sigma }\prod\limits_{t=1}^{k} a_{i_{t} j_{\sigma_{t}}^{1}\dots j_{\sigma_{t}}^{d-1}}\Bigg]\\
\nonumber &=& \frac{1}{(n^{\underline{k}})^{d-1}}\sum_{\substack{\lbrace i_{1},\dots, i_{k}\rbrace \in C_{n,k}\\
\lbrace j_{1}^{s},\dots, j_{k}^{s}\rbrace \in P_{n,k}\\
}}
E\Bigg[\prod\limits_{t=1}^{k} a_{i_{t} j_{\sigma_{t}}^{1}\dots j_{\sigma_{t}}^{d-1}}\Bigg]\\
\nonumber &=&\frac{1}{(n^{\underline{k}})^{d-1}}\sum_{\substack{\lbrace i_{1},\dots, i_{k}\rbrace \in C_{n,k}\\
\lbrace j_{1}^{s},\dots, j_{k}^{s}\rbrace \in P_{n,k}\\
}}\prod\limits_{t=1}^{k} E\Bigg[a_{i_{t} j_{\sigma_{t}}^{1}\dots j_{\sigma_{t}}^{d-1}}\Bigg]\\
&=&0,
\end{eqnarray*} 
where $ s=1,2,\dots,d-1 $ and the summation runs over all one-to-one functions $ \sigma $ from $ \lbrace 1, \dots , k \rbrace $ to $ \lbrace 1, \dots , k \rbrace $. The last equality is true since the entries of $A$ are i.i.d and have mean value equal to $0$. This proves that $E[a_{k}]=0$ for $k> 0$.
\\
Now, for the second part by using (\ref{m5}) we obtain
\begin{eqnarray*}
\nonumber E[a_{k}\overline{a_{l}}]&=& \frac{1}{(n^{\underline{k}})^{d-1}(n^{\underline{l}})^{d-1}} \sum\limits_{B\subseteq_{k} A}\sum\limits_{B^{\prime}\subseteq_{l} A} E [per(B) \overline{per(B^{\prime})}]\\
\nonumber &=& \frac{1}{(n^{\underline{k}})^{d-1}(n^{\underline{l}})^{d-1}}  \Lambda(C_{n,k},P_{n,k},P_{n,k},\dots , P_{n,k}: C_{n,l},P_{n,l},P_{n,l},\dots , P_{n,l})\\
\nonumber &=& \frac{\delta_{k,l}}{(n^{\underline{k}})^{2(d-1)}}  \sum_{\substack{\lbrace i_{1}^{1},\dots, i_{k}^{1}\rbrace \in C_{n,k} \\
\lbrace i_{1}^{l},\dots, i_{k}^{l}\rbrace \in P_{n,k}
}}
\prod\limits_{t=1}^{k} E \vert a_{i_{t}^{1}i_{t}^{2}\dots i_{t}^{d}}\vert^{2}\\
\nonumber &=&\frac{\delta_{k,l}}{(n^{\underline{k}})^{2(d-1)}}  \sum_{\substack{\lbrace i_{1}^{1},\dots, i_{k}^{1}\rbrace \in C_{n,k} \\
\lbrace i_{1}^{l},\dots, i_{k}^{l}\rbrace \in P_{n,k}
}}
\prod\limits_{t=1}^{k} Var(a_{i_{t}^{1}i_{t}^{2}\dots i_{t}^{d}})\\
\nonumber &=&\frac{\delta_{k,l}}{(n^{\underline{k}})^{2(d-1)}} \binom{n}{k}(\frac{n!}{(n-k)!})^{d-1}\\
\nonumber &=&\frac{1}{k! (n^{\underline{k}})^{(d-2)}}\\
\nonumber &\leq &\frac{1}{k!},
\end{eqnarray*}
where $ l=2,3,\dots,d $. Here we used the fact that, the variance of each entry is $1$.
\end{proof}
\begin{lemma}\label{m26}
For any $m \in \lbrace 0,1,\dots,n \rbrace$ and $k,l \in \lbrace 1,2,\dots,n\rbrace$,
\[
E[V_{m}]=\delta_{m,0}
\]
and
\begin{eqnarray*}
 \nonumber E[(V_{k}-a_{k})\overline{(V_{l}-a_{l})}]&=&\delta_{k,l}\Bigg[\bigg((\frac{1}{n^{(d-1)k}}-\frac{1}{(n^{\underline{k}})^{d-1}})^{2}\frac{(n^{\underline{k}})^{d}}{k!}\bigg)\\
\nonumber &+& \bigg( \frac{1}{k!n^{2k(d-1)}} \sum\limits_{i=1}^{d-1}(n^{\underline{k}})^{i}(n-n^{\underline{k}}) (n^{k})^{d-1-i}\bigg)\Bigg]\\
\nonumber &\leq &\frac{k(k-1)(d-1)^{2}}{4n^{2}(k-2)!}+\frac{d-1}{2n (k-2)!}.
\end{eqnarray*}
\end{lemma}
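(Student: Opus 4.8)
The plan is to realize each of $V_m$ and $a_k$ as a normalized sum of monomials in the i.i.d.\ zero-mean entries of $A$, to read the first moment and the needed second moments off the orthogonality of such monomials, and then to reduce the whole computation to counting two families of index configurations. First I would settle $E[V_m]=\delta_{m,0}$: for $m=0$ we have $V_0=1$ since $e_0=1$, while for $m\ge 1$ one has $e_m(C_1,\dots,C_n)=\sum_{\{j_1,\dots,j_m\}}C_{j_1}\cdots C_{j_m}$, and for pairwise distinct $j_1,\dots,j_m$ the random variables $C_{j_1},\dots,C_{j_m}$ are functions of pairwise disjoint sets of entries of $A$ (disjoint hyperplanes of type $2$), hence jointly independent; since $E[C_j]=0$ this forces $E[e_m(C_1,\dots,C_n)]=0$ and thus $E[V_m]=0$.

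Next I would turn $a_k$ and $V_k$ into explicit sums of monomials. Expanding the subtensor permanents exactly as in the proof of Lemma~\ref{m3} gives $a_k=\frac{1}{(n^{\underline{k}})^{d-1}}\sum_{T\in\mathcal A_k}x_T$, where $\mathcal A_k$ is the set of monomials $x_T=\prod_{i\in S}a_{i,\pi_2(i),\dots,\pi_d(i)}$ indexed by a $k$-subset $S\subseteq[n]$ of first coordinates together with injections $\pi_2,\dots,\pi_d\colon S\hookrightarrow[n]$; each $x_T$ is a product of $k$ distinct entries whose index tuples have pairwise distinct coordinates in \emph{every} one of the $d$ directions, and the set of those index tuples recovers $T$. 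Unfolding the definition \eqref{rs2} of $V_k$ likewise gives $V_k=\frac{1}{n^{k(d-1)}}\sum_{T\in\mathcal V_k}x_T$, where $\mathcal V_k$ is indexed by a $k$-subset $\{j_1,\dots,j_k\}\subseteq[n]$ of second coordinates together with an arbitrary element of $[n]^{d-1}$ attached to each $j_m$ (the coordinates in the other $d-1$ directions, now free to repeat); each such $x_T$ is again a product of $k$ distinct entries, required only to have distinct second coordinates, and again its index set recovers $T$. Since the entries of $A$ are i.i.d.\ with mean $0$ and variance $1$ and no entry repeats inside a monomial, $E[x_T\overline{x_{T'}}]$ equals $1$ when $T$ and $T'$ have the same index set and $0$ otherwise; as each monomial above is a product of exactly $k$ (resp.\ $l$) entries, this expectation already vanishes unless $k=l$, which is the source of the $\delta_{k,l}$.

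For $k=l$, I would split $\mathcal V_k=\mathcal V_k^{g}\sqcup\mathcal V_k^{b}$ according to whether the index set of the monomial has pairwise distinct coordinates in all $d$ directions (``good'') or not (``bad''). Sending a monomial to its index set is a bijection between $\mathcal A_k$ and $\mathcal V_k^{g}$ under which matching monomials coincide, so with $G=\sum_{T\in\mathcal A_k}x_T$ and $B=\sum_{T\in\mathcal V_k^{b}}x_T$ we get $V_k-a_k=\bigl(\tfrac{1}{n^{k(d-1)}}-\tfrac{1}{(n^{\underline{k}})^{d-1}}\bigr)G+\tfrac{1}{n^{k(d-1)}}B$. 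The monomials of $G$ carry good index sets and those of $B$ carry bad ones, so by the orthogonality above $E[G\overline{B}]=0$ and hence
\[
E\bigl[|V_k-a_k|^2\bigr]=\Bigl(\tfrac{1}{n^{k(d-1)}}-\tfrac{1}{(n^{\underline{k}})^{d-1}}\Bigr)^{2}E\bigl[|G|^2\bigr]+\tfrac{1}{n^{2k(d-1)}}E\bigl[|B|^2\bigr].
\]
Orthogonality gives $E[|G|^{2}]=|\mathcal A_k|=\binom{n}{k}(n^{\underline{k}})^{d-1}=\tfrac{(n^{\underline{k}})^{d}}{k!}$ and $E[|B|^{2}]=|\mathcal V_k|-|\mathcal A_k|=\binom{n}{k}\bigl((n^{k})^{d-1}-(n^{\underline{k}})^{d-1}\bigr)$. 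Substituting $E[|G|^{2}]$ reproduces the first bracketed summand of the statement verbatim, and substituting $E[|B|^{2}]$ and expanding $(n^{k})^{d-1}-(n^{\underline{k}})^{d-1}=(n^{k}-n^{\underline{k}})\sum_{i=0}^{d-2}(n^{k})^{i}(n^{\underline{k}})^{d-2-i}$, then reindexing and absorbing one factor $n^{\underline{k}}$, reproduces the second; equivalently the whole quantity equals $\tfrac{1}{k!}\bigl(\tfrac{1}{(n^{\underline{k}})^{d-2}}-\tfrac{n^{\underline{k}}}{(n^{k})^{d-1}}\bigr)$, which collapses to $0$ when $k=1$ (indeed $V_1=a_1$).

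Finally, for the numerical bound I would estimate the two summands separately using Weierstrass' inequality $\prod_{j=1}^{k-1}\bigl(1-\tfrac{j}{n}\bigr)\ge 1-\binom{k}{2}\tfrac{1}{n}$, which gives at once $1-\bigl(\tfrac{n^{\underline{k}}}{n^{k}}\bigr)^{d-1}\le\tfrac{(d-1)k(k-1)}{2n}$ and $n^{k}-n^{\underline{k}}\le\tfrac{k(k-1)}{2}n^{k-1}$. Feeding these into the two summands, together with $(n^{\underline{k}})^{d-2}\ge 1$, $n^{\underline{k}}\le n^{k}$ and $\tfrac{k^{2}(k-1)^{2}}{k!}=\tfrac{k(k-1)}{(k-2)!}$, bounds the first by $\tfrac{(d-1)^{2}k(k-1)}{4n^{2}(k-2)!}$ and the second by $\tfrac{d-1}{2n(k-2)!}$, which is the asserted estimate. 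I expect the only genuine difficulty to lie in the bookkeeping of the third step: pinning down the two monomial families and their sizes, verifying that the ``index-set'' map $\mathcal A_k\leftrightarrow\mathcal V_k^{g}$ is a bijection that preserves monomials and that the cross term $E[G\overline{B}]$ vanishes, and carrying the difference-of-powers rewriting through to the displayed form. The probabilistic input beyond that is nothing more than the orthogonality of monomials in i.i.d.\ centered variables.
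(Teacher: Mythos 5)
Your proposal is correct and follows essentially the same route as the paper: your good/bad split of the $V_k$-monomials with the index-set bijection onto the $a_k$-monomials is exactly the paper's decomposition \eqref{mnn}, your orthogonality of monomials in i.i.d.\ centered entries is the paper's $\Lambda$-computation \eqref{m5}, and your counts $E[|G|^2]=\frac{(n^{\underline{k}})^{d}}{k!}$, $E[|B|^2]=\binom{n}{k}\bigl((n^{k})^{d-1}-(n^{\underline{k}})^{d-1}\bigr)$ together with the difference-of-powers factorization reproduce verbatim the paper's exact expression $\frac{1}{k!\,n^{2k(d-1)}}\sum_{i=1}^{d-1}(n^{\underline{k}})^{i}(n^{k}-n^{\underline{k}})(n^{k})^{d-1-i}$ (note the factor is $n^{k}-n^{\underline{k}}$, as in the paper's proof; the "$n-n^{\underline{k}}$" in the lemma's displayed statement is a typo), after which your Weierstrass/Bernoulli estimates give the same final bound. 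The only cosmetic difference is that the paper subdivides the bad part into $d-1$ pieces according to which coordinate direction first fails injectivity and evaluates each via $\Lambda$, whereas you count the bad monomials in one block and factor the difference of powers afterwards.
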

\begin{proof}
Since $V_{0}\equiv1$ by definition, $E[V_{0}]=1$. For $m> 0$, 
\begin{eqnarray*}
\nonumber E[V_{m}]&=&\frac{1}{n^{\frac{m(d-1)}{2}}} \sum\limits_{\lbrace j_{1},\dots ,j_{m}\rbrace \in C_{n,m}} E[C_{j_{1}} C_{j_{2}} \dots C_{j_{m}}]\\
\nonumber &=& \frac{1}{n^{\frac{m(d-1)}{2}}} \sum\limits_{\lbrace j_{1},\dots ,j_{m}\rbrace \in C_{n,m}} E \bigg [ (\frac{1}{n^{\frac{d-1}{2}}} \sum\limits_{i_{1}^{1},i_{3}^{1},\dots ,i_{d}^{1}=1}^{n} a_{i_{1}^{1} j_{1} i_{3}^{1} \dots i_{d}^{1}})\cdot \dots \\
\nonumber &\cdot & (\frac{1}{n^{\frac{d-1}{2}}} \sum\limits_{i_{1}^{m},i_{3}^{m},\dots ,i_{d}^{m}=1}^{n} a_{i_{1}^{m} j_{m} i_{3}^{m} \dots i_{d}^{m}})\bigg]\\
\nonumber &=& \frac{1}{n^{m(d-1)}} 
\sum_{\substack{ \lbrace j_{1},\dots ,j_{m}\rbrace \in C_{n,m}\\
\lbrace i_{1}^{s},i_{3}^{s},\dots ,i_{d}^{s}\rbrace \in [n] 
}} E\bigg [\prod_{t=1}^{m}a_{i_{1}^{t} j_{t} i_{3}^{t} \dots i_{d}^{t}}\bigg]\\
\nonumber &=& 0,
\end{eqnarray*}
where $ s=1,2,\dots,m $. For the second part by definitions (\ref{rs1}) and (\ref{rs2}), we can write
\begin{eqnarray}\label{mnn}
\nonumber V_{k}-a_{k}&=& \Bigg[ (\frac{1}{n^{k(d-1)}}-\frac{1}{(n^{\underline{k}})^{d-1}})
(\sum_{\substack{ \lbrace i_{1}^{1},\dots ,i_{k}^{1}\rbrace \in P_{n,k}\\
\lbrace i_{1}^{2},\dots ,i_{k}^{2}\rbrace \in C_{n,k} \\
\lbrace i_{1}^{l},\dots ,i_{k}^{l}\rbrace \in P_{n,k}\\
}} \prod\limits_{t=1}^{k} a_{i_{t}^{1}i_{t}^{2}\dots i_{t}^{d}}) \Bigg]
\\
\nonumber &+& \frac{1}{n^{k(d-1)}} \Bigg[ \sum_{\substack{ \lbrace i_{1}^{1},\dots ,i_{k}^{1}\rbrace \in P_{n,k}\\
\lbrace i_{1}^{2},\dots ,i_{k}^{2}\rbrace \in C_{n,k} \\
\lbrace i_{1}^{s},\dots ,i_{k}^{s}\rbrace \in P_{n,k}\\
\lbrace i_{1}^{d},\dots ,i_{k}^{d}\rbrace \in [n]^{k}-P_{n,k}\\
}} \prod\limits_{t=1}^{k} a_{i_{t}^{1}i_{t}^{2}\dots i_{t}^{d}}
+ 
\sum_{\substack{ \lbrace i_{1}^{1},\dots ,i_{k}^{1}\rbrace \in P_{n,k}\\
\lbrace i_{1}^{2},\dots ,i_{k}^{2}\rbrace \in C_{n,k} \\
\lbrace i_{1}^{r},\dots ,i_{k}^{r}\rbrace \in P_{n,k} \\
\lbrace i_{1}^{d-1},\dots ,i_{k}^{d-1}\rbrace \in [n]^{k}-P_{n,k}\\
\lbrace i_{1}^{d},\dots ,i_{k}^{d}\rbrace \in [n]^{k}\\
}} \prod\limits_{t=1}^{k} a_{i_{t}^{1}i_{t}^{2}\dots i_{t}^{d}}\\
 &+& \dots +
\sum_{\substack{ \lbrace i_{1}^{1},\dots ,i_{k}^{1}\rbrace \in [n]^{k}-P_{n,k}\\
\lbrace i_{1}^{2},\dots ,i_{k}^{2}\rbrace \in C_{n,k} \\
\lbrace i_{1}^{l},\dots ,i_{k}^{l}\rbrace \in [n]^{k} \\
}} \prod\limits_{t=1}^{k} a_{i_{t}^{1}i_{t}^{2}\dots i_{t}^{d}}\Bigg].
\end{eqnarray}
where $ l=3,4,\dots,d $, $ s=3,4,\dots,d-1 $ and $ r=3,4,d-2 $ . For computing $ E[(V_{k}-a_{k})\overline{(V_{l}-a_{l})}] $, at first we write $ V_{k}-a_{k} $ and $ V_{l}-a_{l} $ by using  (\ref{mnn}) and multiply these together. By (\ref{m5}), $ E[(V_{k}-a_{k})\overline{(V_{l}-a_{l})}]$ is non-zero only if $k=l$, and it is equal to $0$ for $k\neq l$. For $l=k$, we obtain
\begin{eqnarray}\label{m6}
\nonumber  E[(V_{k}-a_{k})\overline{(V_{k} - a_{k})}] &=&\Bigg[(\frac{1}{n^{k(d-1)}} - \frac{1}{(n^{\underline{k})^{d-1}}} )^{2}\\
\nonumber &\cdot &\Lambda(P_{n,k},C_{n,k},P_{n,k},\dots ,P_{n,k}:P_{n,k},C_{n,k},P_{n,k},\dots ,P_{n,k}) \Bigg]\\
\nonumber &+& \Bigg[\frac{1}{n^{2k(d-1)}} \\
\nonumber &\cdot & \scalebox{.95}{$\Lambda(P_{n,k},C_{n,k},P_{n,k},\dots ,P_{n,k},[n]^{k} - P_{n,k}: P_{n,k},C_{n,k},P_{n,k},\dots ,P_{n,k},[n]^{k} - P_{n,k}) $}\Bigg]\\
\nonumber &+& \Bigg[\frac{1}{n^{2k(d-1)}}\\
\nonumber &\cdot &  \scalebox{.9}{$\Lambda(P_{n,k},C_{n,k},P_{n,k},\dots ,P_{n,k},[n]^{k} - P_{n,k},[n]^{k}: P_{n,k},C_{n,k},P_{n,k},\dots ,P_{n,k},[n]^{k} - P_{n,k},[n]^{k})$}\Bigg]\\
\nonumber &+& \dots + \Bigg[ \frac{1}{n^{2k(d-1)}} \\
\nonumber &\cdot &\Lambda([n]^{k} - P_{n,k},C_{n,k},[n]^{k},[n]^{k},\dots ,[n]^{k}: [n]^{k} - P_{n,k},C_{n,k},[n]^{k},[n]^{k},\dots ,[n]^{k})\Bigg].\\ 
\end{eqnarray}
Thus, by relation (\ref{m6}) we can write
\begin{eqnarray}\label{m7}
\nonumber E[(V_{k}-a_{k})\overline{(V_{k} - a_{k})}]&=& \Bigg[\bigg (\frac{1}{n^{k(d-1)}} - \frac{1}{(n^{\underline{k})^{d-1}}} \bigg)^{2}
\sum_{\substack{ \lbrace i_{1}^{1},\dots ,i_{k}^{1}\rbrace \in P_{n,k}\\
\lbrace i_{1}^{2},\dots ,i_{k}^{2}\rbrace \in C_{n,k} \\
\lbrace i_{1}^{l},\dots ,i_{k}^{l}\rbrace \in P_{n,k}\\
}} \prod\limits_{t=1}^{k} a_{i_{t}^{1}i_{t}^{2}\dots i_{t}^{d}}\Bigg]\\
\nonumber &+&  \frac{1}{n^{2k(d-1)}}\Bigg [\sum_{\substack{ \lbrace i_{1}^{1},\dots ,i_{k}^{1}\rbrace \in P_{n,k}\\
\lbrace i_{1}^{2},\dots ,i_{k}^{2}\rbrace \in C_{n,k} \\
\lbrace i_{1}^{s},\dots ,i_{k}^{s}\rbrace \in P_{n,k}\\
\lbrace i_{1}^{d},\dots ,i_{k}^{d}\rbrace \in [n]^{k}-P_{n,k}\\
}} \prod\limits_{t=1}^{k} a_{i_{t}^{1}i_{t}^{2}\dots i_{t}^{d}} + \dots \\
\nonumber &+& \sum_{\substack{ \lbrace i_{1}^{1},\dots ,i_{k}^{1}\rbrace \in [n]^{k}-P_{n,k}\\
\lbrace i_{1}^{2},\dots ,i_{k}^{2}\rbrace \in C_{n,k} \\
\lbrace i_{1}^{l},\dots ,i_{k}^{l}\rbrace \in [n]^{k} \\
}} \prod\limits_{t=1}^{k} a_{i_{t}^{1}i_{t}^{2}\dots i_{t}^{d}}\Bigg]\\
\nonumber &=& (\frac{1}{n^{k(d-1)}} - \frac{1}{(n^{\underline{k})^{d-1}}} )^{2} \frac{(n^{\underline{k}})^{d}}{k!}+ \frac{1}{n^{2k(d-1)}k!}. (n^{\underline{k}})^{d-1}\cdot (n^{k}-n^{\underline{k}})\\
\nonumber &+& \dots +\frac{1}{n^{2k(d-1)}k!} n^{\underline{k}} (n^{k}-n^{\underline{k}}) n^{k(d-2)}\\
 &=&(\frac{1}{n^{k(d-1)}} - \frac{1}{(n^{\underline{k})^{d-1}}} )^{2} \frac{(n^{\underline{k}})^{d}}{k!} + \sum\limits_{i=1}^{d-1} \dfrac{(n^{\underline{k}})^{i}(n^{k}-n^{\underline{k}}) n^{k(d-1-i)}}{n^{2k(d-1)}k!},
\end{eqnarray}
where $ l=3,4,\dots,d $, $ s=3,4,\dots,d-1 $. In what follows, we bound each term of equation (\ref{m7}).
\itemize
\item
First,
\begin{eqnarray*}
\nonumber \bigg(\frac{1}{n^{k(d-1)}} - \frac{1}{(n^{\underline{k})^{d-1}}}\bigg )^{2} \frac{(n^{\underline{k}})^{d}}{k!}&=& \frac{(n^{k(d-1)}-(n^{\underline{k}})^{d-1})^{2}}{n^{2k(d-1)}.(n^{\underline{k}})^{2(d-1})}.\frac{(n^{\underline{k}})^{d}}{k!}\\
\nonumber &=& \bigg(\frac{n^{k(d-1)}-(n^{\underline{k}})^{d-1}}{n^{k(d-1)}}\bigg)^{2}.\frac{1}{k!(n^{\underline{k}})^{d-2}}\\
\nonumber &\leq &\bigg (1-\bigg[(1-\frac{1}{n})(1-\frac{2}{n})\dots (1-\frac{k-1}{n})\bigg]^{d-1}\bigg)^{2}\\
\nonumber &\cdot &\frac{1}{k!(n^{\underline{k}})^{d-2}}\\
\nonumber &=& \bigg (1- (1-\frac{1}{n})(1-\frac{2}{n})\dots (1-\frac{k-1}{n})\bigg)^{2}\\
\nonumber &\cdot & \bigg(\sum\limits_{s=0}^{d-2} (1-\frac{1}{n})^{d-s-2}\dots (1-\frac{k-1}{n})^{d-s-2} \bigg)^{2}\times \frac{1}{k!(n^{\underline{k}})^{d-2}}.
\end{eqnarray*}
Since 
\[
\bigg (1- (1-\frac{1}{n})(1-\frac{2}{n})\dots (1-\frac{k-1}{n})\bigg)^{2}\leq  \bigg(\sum\limits_{t=0}^{k-1}\frac{t}{n}\bigg)^{2}\leq \bigg(\frac{k(k-1)}{2n}\bigg)^{2}, 
\]
we can write
\begin{eqnarray*}
& &\bigg (1- (1-\frac{1}{n})(1-\frac{2}{n})\dots (1-\frac{k-1}{n})\bigg)^{2}\\
&\cdot &  \bigg(\sum\limits_{s=0}^{d-2} (1-\frac{1}{n})^{d-s-2}\dots (1-\frac{k-1}{n})^{d-s-2} \bigg)^{2}\times \frac{1}{k!(n^{\underline{k}})^{d-2}}\\
 &\leq & \bigg(\frac{k(k-1)}{2n}\bigg)^{2}\\
 &\cdot & \bigg(\sum\limits_{s=0}^{d-2} (1-\frac{1}{n})^{d-s-2}\dots (1-\frac{k-1}{n})^{d-s-2} \bigg)^{2}\times \frac{1}{k!(n^{\underline{k}})^{d-2}}\\
 &\leq & \frac{(k(k-1))^{2}}{4n^{2}}(d-1)^{2}.\frac{1}{k!}.
\end{eqnarray*}
Thus, we have
\begin{eqnarray}\label{m8}
\bigg(\frac{1}{n^{k(d-1)}} - \frac{1}{(n^{\underline{k})^{d-1}}}\bigg )^{2} \frac{(n^{\underline{k}})^{d}}{k!} \leq \frac{(k(k-1))^{2}}{4n^{2}}(d-1)^{2}.\frac{1}{k!}.
\end{eqnarray}
\item
Second,
\begin{eqnarray}\label{m99}
\nonumber \sum\limits_{i=1}^{d-1}\frac{1}{k!}.\frac{(n^{k}-n^{\underline{k}})}{n^{k}}. \frac{(n^{\underline{k}})^{i}n^{k(d-1-i)}}{n^{2k(d-1)-k}}&=& \sum\limits_{i=1}^{d-1} \frac{1}{k!}. \frac{(n^{k}-n^{\underline{k}})}{n^{k}}. \frac{(n^{\underline{k}})^{i}}{n^{kd+ki-2k}}\\
\nonumber &\leq & \sum\limits_{i=1}^{d-1} \frac{1}{k!}.\frac{(n^{k}-n^{\underline{k}})}{n^{k}}. \frac{(n^{\underline{k}})^{i}}{n^{ki}n^{k(d-2)}}\\
\nonumber &\leq & (d-1)\frac{1}{k!}. \frac{k(k-1)}{2n}.\frac{1}{n^{k(d-2)}}\\
\nonumber &\leq & (d-1)\frac{1}{k!}. \frac{k(k-1)}{2n}.\\
\end{eqnarray}
The last inequality follows from the facts $d-2>0$, $(n^{\underline{k}})^{i}\leq n^{ki}$ and $n^{k(d-2)}>1$. Thus, by (\ref{m7}), (\ref{m8}) and (\ref{m99}) we obtain
\begin{eqnarray}
\nonumber E[(V_{k}-a_{k})\overline{(V_{l} - a_{l})}] &\leq & \frac{(k(k-1))^{2}}{4n^{2}}(d-1)^{2}.\frac{1}{k!}+\frac{1}{k!}\frac{k(k-1)}{2n}(d-1)\\
\nonumber &\leq & \frac{k(k-1)(d-1)^{2}}{4n^{2}(k-2)!}+\frac{d-1}{2n (k-2)!}.
\end{eqnarray}
\end{proof}
In the following lemma, we prove that the remaining terms of the series are small with high probability.
\begin{lemma}\label{12} 
Taking relations (\ref{aa}) into account, we find that
\begin{equation*}
P \Bigg( \bigg \vert \sum\limits_{k=t+1}^{n} a_{k} z^{k} \bigg \vert \leq n^{-\gamma} \varepsilon \Bigg) \geq 1-o(1).
\end{equation*}
\end{lemma}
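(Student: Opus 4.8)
### Proof plan for Lemma \ref{12}

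The plan is to bound the tail $\sum_{k=t+1}^{n} a_k z^k$ by a second-moment argument. First I would apply Markov's inequality to the non-negative random variable $\bigl|\sum_{k=t+1}^{n} a_k z^k\bigr|^2$, so that it suffices to show $E\bigl[\bigl|\sum_{k=t+1}^{n} a_k z^k\bigr|^2\bigr]$ is smaller than $(n^{-\gamma}\varepsilon)^2$ by a factor that tends to $\infty$ with $n$ (equivalently, $o(n^{-2\gamma}\varepsilon^2)$). Expanding the square and using linearity of expectation, $E\bigl[\bigl|\sum_{k} a_k z^k\bigr|^2\bigr] = \sum_{k,l} E[a_k\overline{a_l}]\, z^k\overline{z^l}$, and by Lemma \ref{m3} the cross terms vanish ($E[a_k\overline{a_l}] = 0$ for $k\neq l$), leaving $\sum_{k=t+1}^{n} E[|a_k|^2]\, |z|^{2k} \le \sum_{k=t+1}^{n} \frac{|z|^{2k}}{k!}$, again by Lemma \ref{m3}.

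The next step is to estimate $\sum_{k=t+1}^{n} \frac{|z|^{2k}}{k!}$. Writing $w = |z|^2 \le \bigl(\frac{\ln n}{(d-1)^6}\bigr)^{2c} \le (\ln n)^{2c}$ by relation \eqref{aa}, this is the tail of the series for $e^{w}$ past index $t = \ln n + \ln\frac1\varepsilon$. Since $c < \frac18 < \frac12$, we have $w \ll t$ for large $n$, so the terms $w^k/k!$ decay geometrically with ratio $w/(k+1) \le w/t \to 0$ once $k \ge t$; hence the tail is dominated (up to a factor $2$, say) by its first term $\frac{w^{t+1}}{(t+1)!}$. Using the crude bound $(t+1)! \ge (t/e)^{t}$ and $w^{t} = e^{t\ln w} \le e^{t \cdot 2c\ln\ln n}$, one gets
\[
\sum_{k=t+1}^{n}\frac{w^k}{k!} \;\le\; 2\,\frac{w^{t+1}}{(t+1)!} \;\le\; \exp\bigl(t(2c\ln\ln n - \ln t + 1) + O(\ln w)\bigr).
\]
Since $t \ge \ln n$ and $\ln t \ge \ln\ln n$, the exponent's leading behaviour is $t\bigl(2c\ln\ln n - \ln\ln n + O(1)\bigr) = -t(1-2c)\ln\ln n\,(1+o(1))$, which, because $1 - 2c > \frac34 > 0$ and $t \ge \ln n$, is $\le -(1-2c)\ln n \cdot \ln\ln n \,(1+o(1))$. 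This decays faster than any fixed polynomial in $1/n$, in particular it is $o(n^{-2\gamma}\varepsilon^2)$ (recall $t$ already absorbs the $\ln\frac1\varepsilon$, so the $\varepsilon^2$ is harmless). Feeding this back into Markov gives the claimed probability bound $1 - o(1)$.

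The main obstacle is bookkeeping the quantified relations in \eqref{aa} carefully: one must keep $\varepsilon$, $z$, $d$ and $\gamma$ all expressed in terms of $n$ through those inequalities, and verify that the super-polynomial decay of the factorial tail genuinely beats the polynomial loss $n^{2\gamma}$ and the $\varepsilon^{-2}$ blow-up simultaneously — the fact that $t$ is defined to include $\ln\frac1\varepsilon$ is exactly what makes this work, and the choice $c<\frac18$ (so $1-2c$ is bounded away from $0$) is what guarantees the exponent is negative. A minor technical point is handling the dependence of the bound on $d$: since $d \in \mathcal{O}(\mathrm{polylog}(n))$ and the factor $(d-1)^{-6}$ only \emph{helps} (it makes $|z|$ smaller), $d$ does not interfere, but this should be noted explicitly rather than swept aside.
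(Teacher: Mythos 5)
Your proposal is correct and takes essentially the same route as the paper: both reduce to the second moment via Lemma \ref{m3} (cross terms vanish, $E[|a_k|^2]\le 1/k!$), apply Chebyshev/Markov, and then use the fact that for $k\ge t$ the terms $|z|^{2k}/k!$ decay at least geometrically so the tail is controlled by its first term, which beats the $n^{2\gamma}/\varepsilon^{2}$ loss because $t=\ln\frac{n}{\varepsilon}$. Your explicit Stirling-type estimate of $\frac{|z|^{2(t+1)}}{(t+1)!}$ merely spells out what the paper leaves as ``this probability is $o(1)$ for large $n$ and $d$.''
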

\begin{proof}
First, it follows from Lemma \ref{m3} that
\[
E \bigg [ \sum\limits_{k=t+1}^{n} a_{k} z^{k} \bigg]=\sum\limits_{k=t+1}^{n} E[a_{k}] z^{k}=0
\]
and 
\begin{eqnarray}
\nonumber Var \bigg (\sum\limits_{k=t+1}^{n} a_{k} z^{k} \bigg )&=& E\Bigg [ \bigg \vert \sum\limits_{k=t+1}^{n}a_{k} z^{k} \bigg \vert^{2} \Bigg]\\
\nonumber &=& \sum\limits_{k,l=t+1}^{n} E[a_{k} \overline{a_{l}} ] z^{k} \overline{z}^{l}\\
\nonumber &=&\sum\limits_{k=t+1}^{n}  \frac{1}{k!}. \frac{1}{(n^{\underline{k})^{d-2}}} . \vert z \vert ^{2k}\\
\nonumber & \leq & \sum\limits_{k=t+1}^{n}  \frac{1}{k!} \vert z \vert ^{2k}.
\end{eqnarray}
Now, by Chebyshev's inequality we obtain
\begin{eqnarray}
\nonumber P \Bigg( \bigg \vert \sum\limits_{k=t+1}^{n} a_{k} z^{k} \bigg \vert \geq n^{-\gamma} \varepsilon \Bigg) & \leq & \frac{n^{2 \gamma}}{\varepsilon^{2}} Var \bigg( \sum\limits_{k=t+1}^{n} a_{k} z^{k} \bigg )\\
\nonumber & \leq & \frac{n^{2 \gamma}}{\varepsilon^{2}}  \sum\limits_{k=t+1}^{n} \frac{\vert z \vert^{2k}}{k!}. 
\end{eqnarray}
For each $k \geq t $, 
\[
\frac{\vert z \vert^{2(k+1)}}{(k+1)!}\div \frac{\vert z \vert^{2k}}{k!}< \frac{1}{2(d-1)^{6}}.
\]
Thus,
\begin{eqnarray}
\nonumber P \Bigg( \bigg \vert \sum\limits_{k=t+1}^{n} a_{k} z^{k} \bigg \vert \geq n^{-\gamma} \varepsilon\bigg) &\leq & \frac{n^{2\gamma}}{\varepsilon^{2}} \bigg (\frac{\vert z \vert ^{2(t+1)}}{(t+1)!}+ 
\frac{\vert z \vert ^{2(t+2)}}{(t+2)!}+ \dots + \frac{\vert z \vert ^{2n}}{n!}\bigg)\\
\nonumber & \leq & \frac{n^{2\gamma}}{\varepsilon^{2}}\bigg ( \frac{1}{2(d-1)^{6}}\frac{\vert z \vert^{2t}}{t!}+ \frac{1}{4(d-1)^{12}}\frac{\vert z \vert^{2t}}{t!}+\dots + \frac{1}{2^{n-t}(d-1)^{6(n-t)}}\frac{\vert z \vert^{2t}}{t!}\bigg)\\
\nonumber & \leq & \frac{n^{2\gamma}}{\varepsilon^{2}}\frac{\vert z \vert^{2t}}{t!} 
 \sum\limits_{k=1}^{\infty} (\frac{1}{2(d-1)^{6}})^{2k}\\
\nonumber & \leq &\frac{n^{2\gamma}}{\varepsilon^{2}}\frac{\vert z \vert^{2t}}{t!} \bigg( \frac{1}{2(d-1)^{6}-1}\bigg ).
\end{eqnarray}
Here we used the fact that, the power series converges to $ \frac{1}{2(d-1)^{6}-1}$ for large $d$. Also, since $t=\ln \frac{n}{\varepsilon}$ and $ \vert z \vert\leq \bigg(\frac{\ln n}{(d-1)^{6}}\bigg)^{\frac{1}{8}}$, this probability is $o(1)$ for large $n$ and $d$.
\end{proof}
\begin{lemma}\label{16}
Taking relations (\ref{aa}) into account, with probability $ 1-o(1) $,
\[
\bigg \vert \sum_{k=0}^{t} a_{k}z^{k} - \sum_{k=0}^{t} V_{k}z^{k} \bigg\vert \leq n^{-\beta}.
\]
\end{lemma}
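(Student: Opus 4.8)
The plan is to treat $\sum_{k=0}^{t}(a_{k}-V_{k})z^{k}$ as a single random variable and apply Chebyshev's inequality, just as in the proof of Lemma~\ref{12}. First I would record that this variable has mean zero: by Lemma~\ref{m3} and Lemma~\ref{m26}, $E[a_{k}]=E[V_{k}]=\delta_{k,0}$, so $E[a_{k}-V_{k}]=0$ for every $k$ (in particular the $k=0$ summand vanishes identically since $a_{0}=V_{0}=1$, and one checks the $k=1$ summand is zero as well), whence $E\big[\sum_{k=0}^{t}(a_{k}-V_{k})z^{k}\big]=0$.

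Next I would compute the variance. The crucial input is the factor $\delta_{k,l}$ in Lemma~\ref{m26}: the differences $V_{k}-a_{k}$ are pairwise orthogonal, so all cross terms drop and, using the bound from that lemma together with the vanishing of the $k=0,1$ terms,
\begin{align*}
\mathrm{Var}\Big(\sum_{k=0}^{t}(a_{k}-V_{k})z^{k}\Big)
&=\sum_{k=0}^{t}E\big[\,|V_{k}-a_{k}|^{2}\,\big]\,|z|^{2k}\\
&\leq\sum_{k=2}^{t}\Big(\frac{k(k-1)(d-1)^{2}}{4n^{2}(k-2)!}+\frac{d-1}{2n(k-2)!}\Big)|z|^{2k}.
\end{align*}
Reindexing by $j=k-2$ and pulling out $|z|^{4}$, each of the two sums becomes $|z|^{4}$ times a truncation of a series of the form $\sum_{j\geq0}p(j)\,|z|^{2j}/j!$ with $p$ a fixed low-degree polynomial, hence is at most $|z|^{4}\cdot q(|z|^{2})\cdot e^{|z|^{2}}$ for an explicit polynomial $q$. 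Now I would invoke the constraints in \eqref{aa}: since $|z|\leq(\ln n/(d-1)^{6})^{c}$ with $c<\tfrac18$, we have $|z|^{2}\leq(\ln n)^{1/4}$, so $e^{|z|^{2}}=n^{o(1)}$ while all powers of $|z|$ are $polylog(n)$; together with $d\in\mathcal{O}(polylog(n))$ this yields
\[
\mathrm{Var}\Big(\sum_{k=0}^{t}(a_{k}-V_{k})z^{k}\Big)\leq\frac{polylog(n)\cdot n^{o(1)}}{n}=n^{-1+o(1)}.
\]

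Finally, Chebyshev's inequality gives
\[
P\Big(\Big|\sum_{k=0}^{t}a_{k}z^{k}-\sum_{k=0}^{t}V_{k}z^{k}\Big|\geq n^{-\beta}\Big)\leq n^{2\beta}\cdot\mathrm{Var}\Big(\sum_{k=0}^{t}(a_{k}-V_{k})z^{k}\Big)\leq n^{-(1-2\beta)+o(1)},
\]
which is $o(1)$ because $\beta<\tfrac12$ by \eqref{aa}, proving the lemma. I expect the only delicate point to be the bookkeeping in the variance estimate: one must verify that the two $d$-dependent prefactors $\tfrac{(d-1)^{2}}{n^{2}}$ and $\tfrac{d-1}{n}$, the polynomial-in-$|z|$ factors coming from the $k(k-1)$ weight and the reindexing, and the exponential $e^{|z|^{2}}$ all stay $polylog(n)\cdot n^{o(1)}$, so that the net decay is genuinely $n^{-1+o(1)}$ and survives multiplication by $n^{2\beta}$ with room to spare. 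Everything else is a direct application of orthogonality plus Chebyshev, parallel to Lemma~\ref{12}.
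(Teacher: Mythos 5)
Your proposal is correct and follows essentially the same route as the paper: mean zero from Lemmas \ref{m3} and \ref{m26}, variance computed via the orthogonality ($\delta_{k,l}$) in Lemma \ref{m26}, reindexing to pull out $|z|^{4}/n$ and bound the tail by an exponential series, then Chebyshev with threshold $n^{-\beta}$ and the constraints in \eqref{aa}. Your explicit observation that the $k=0,1$ terms vanish (so the $(k-2)!$ bound applies only where it is meaningful) and your $n^{o(1)}$ bookkeeping for $e^{|z|^{2}}$ are just slightly more careful renderings of steps the paper performs implicitly.
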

\begin{proof}
By Lemma \ref{m3} and Lemma \ref{m26},
\[
E[ V_{k}]=E[a_{k}]=\delta_{k,0},\qquad V_{0}=a_{0}\equiv 1.
\]
Thus, Lemma \ref{m26} allows us to write
\begin{eqnarray}
\nonumber Var \bigg [\sum_{k=0}^{t} a_{k}z^{k} - \sum_{k=0}^{t} V_{k}z^{k} \bigg ] &=& E\bigg [\bigg ( \sum_{k=0}^{t} (a_{k}-V_{k}) z^{k}\bigg) \bigg( \sum_{k=0}^{t} \overline{(a_{k}-V_{k})} \overline{z}^{k} \bigg ]\\
\nonumber &=& \sum_{k=0}^{t} E \bigg [ (V_{k}-a_{k}) \overline{(V_{k}-a_{k})} \bigg ] \vert z\vert^{2k}\\
\nonumber &\leq & \sum_{k=0}^{t} \vert z\vert^{2k} \bigg(\frac{k(k-1)(d-1)^{2}}{4n^{2}(k-2)!}+\frac{d-1}{2n (k-2)!}\bigg)\\
\nonumber &\leq & \sum_{m=0}^{t-2} \bigg (\dfrac{\vert z\vert^{2(m+2)}}{m! 4n^{2}}(m+2)(m+1)(d-1)^{2} + \frac{\vert z\vert^{2(m+2)}}{m! 2n}(d-1) \bigg)\\
\nonumber &\leq & \frac{\vert z \vert^{4}}{2n} \bigg(\sum_{m=0}^{\infty} \bigg( \frac{\vert z \vert^{2m}}{m!} ) (d-1)^{2}+ \frac{\vert z \vert^{2m}}{m!}(d-1)^{2} \bigg)\\
\nonumber &=& \frac{\vert z \vert^{4}}{n} (d-1)^{2} \bigg(\sum_{m=0}^{\infty}  \frac{\vert z \vert^{2m}}{m!} \bigg) \\
\nonumber &=& \frac{\vert z \vert^{4} e^{(\vert z \vert)^{2}} }{n} (d-1)^{2}. 
\end{eqnarray}
Here, we used the Taylor expansion of $e^{(\vert z \vert)^{2}}$ and the facts that $d> 2$ and $n> 1$. Now, by Chebyshev's inequality we obtain
\[
P \bigg( \bigg \vert \sum_{k=0}^{t} a_{k}z^{k} - \sum_{k=0}^{t} V_{k}z^{k} \bigg\vert \geq n^{-\beta} \bigg)
\leq \frac{\vert z \vert^{4} e^{(\vert z \vert)^{2}}}{n^{1-2\beta}} (d-1)^{2}.
\]
Since $ \vert z \vert \leq (\frac{ln n}{(d-1)^{6}})^{\frac{1}{8}}$, $ \beta < \frac{1}{2}$ and $d> 2$, this probability is $o(1)$ for large $n$ and $d$.
\end{proof}

\section{Approximation of $D_{2}$} 
In this section, we show that $D_{2}$ can be approximated by $\xi$, where $\xi$ is the quasi-variance of $\mathcal{D}$. Also, we present a bound for $D_{k}$, where $ k \geq 3$.
\begin{lemma}\label{m22}
For each $0< \phi < \frac{2d-3}{2}$ with probability $ 1-o(1) $,
\[
\vert D_{2}-\xi \vert \leq n^{-\phi}.
\]
\end{lemma}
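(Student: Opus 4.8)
The plan is a second‑moment argument: compute $E[D_2]$, bound $\mathrm{Var}(D_2)$, and finish with Chebyshev's inequality, the range $0<\phi<\frac{2d-3}{2}$ being exactly what the resulting variance bound can afford.

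First I would write $D_2$ out. Since $D_2=\frac{1}{n^{d-1}}\sum_{j=1}^{n}C_j^{2}$ with $C_j=\frac{1}{\sqrt{n^{d-1}}}\sum_{i_1,i_3,\dots,i_d}a_{i_1 j i_3\cdots i_d}$, expanding the square turns $D_2$ into a double sum over pairs of $(d-1)$‑tuples $\vec i=(i_1,i_3,\dots,i_d)$ and $\vec i\,'$. Using that the entries of $A$ are i.i.d.\ of mean $0$ with $E[a^{2}]=\xi$ and $E[a\overline a]=1$, in $E[D_2]$ only the diagonal pairs $\vec i=\vec i\,'$ survive; a direct count then gives $E[D_2]=\xi$, this being the step where the normalising powers of $n$ in the definitions of $C_j$ and $D_k$ have to be tracked carefully.

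The core is the variance estimate. The key structural fact is that $C_1,\dots,C_n$ are \emph{mutually independent}, because $C_j$ is a function only of the entries of $A$ whose second index equals $j$, and these entry sets are disjoint for distinct $j$. Hence
\[
\mathrm{Var}(D_2)=\frac{1}{n^{2(d-1)}}\sum_{j=1}^{n}\mathrm{Var}(C_j^{2})=n^{3-2d}\,\mathrm{Var}(C_1^{2}),
\]
so it suffices to show $\mathrm{Var}(C_1^{2})=O(1)$. I would split $C_1^{2}-\xi$ into the diagonal part $\frac{1}{n^{d-1}}\sum_{\vec i}(a_{\vec i}^{2}-\xi)$ and the off‑diagonal part $\frac{1}{n^{d-1}}\sum_{\vec i\neq\vec i\,'}a_{\vec i}a_{\vec i\,'}$ (writing $a_{\vec i}$ for the entry with second index $1$). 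Then, using the expectation‑vanishing criterion behind (\ref{m5}): the two parts are uncorrelated (every cross term leaves an isolated mean‑zero factor); the off‑diagonal part has second moment at most $2$, obtained by counting the quadruples with $\{\vec i,\vec i\,'\}=\{\vec m,\vec m\,'\}$ and using only $E|a|^{2}=1$; and the diagonal part has second moment $\frac{1}{n^{d-1}}\,\mathrm{Var}(a^{2})=O(1)$. Combining, $\mathrm{Var}(D_2)=O(n^{-(2d-3)})$, and Chebyshev's inequality gives
\[
P\big(|D_2-\xi|\ge n^{-\phi}\big)\le n^{2\phi}\,\mathrm{Var}(D_2)=O\big(n^{\,2\phi-(2d-3)}\big),
\]
which is $o(1)$ precisely when $\phi<\frac{2d-3}{2}$.

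The main obstacle is the diagonal part, i.e.\ controlling $\mathrm{Var}(a^{2})=E|a|^{4}-|\xi|^{2}$: the definition of $\mathcal D_\mu$ only supplies a finite third absolute moment $\rho$, not a fourth moment. To stay inside the hypotheses I would truncate the entries at a slowly growing threshold $T=T(n)$, bounding the contribution of $a\mathbf{1}_{|a|\le T}$ via $E|a\mathbf{1}_{|a|\le T}|^{4}\le T^{2}$ and the remainder via $E[|a|^{2}\mathbf{1}_{|a|>T}]\le\rho/T$ and $E[|a|\,\mathbf{1}_{|a|>T}]\le\rho/T^{2}$, then redo the diagonal/off‑diagonal split with the truncated and the tail entries and optimise $T$; if instead $\mathcal D$ is assumed to have a finite fourth moment the diagonal estimate is immediate. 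The remaining work — enumerating the surviving index patterns in $E[D_2]$ and in $\mathrm{Var}(C_1^{2})$ and checking that the cross terms vanish — is routine and parallels the moment computations already carried out for $a_k$ and $V_k$ in Lemmas~\ref{m3} and~\ref{m26}.
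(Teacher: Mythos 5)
Your overall strategy coincides with the paper's: truncate the entries to make up for having only a third absolute moment, control the second moment of $D_2$ by exploiting that $C_1,\dots,C_n$ depend on disjoint hyperplanes of type $2$ (the paper uses this implicitly when it writes $\mathrm{Var}[S]\le n\,\mathrm{Var}[S_1^2-n^{d-1}\xi]$), and finish with Chebyshev; both routes give a variance of order $n^{-(2d-3)}$, which is exactly where the restriction $\phi<\frac{2d-3}{2}$ comes from. Your organization is cleaner: splitting $C_1^2-\xi$ into a diagonal and an off-diagonal part isolates the only place a fourth moment is needed, whereas the paper truncates every entry at $n^{d-1}$ up front and then estimates $E[S_1^2\overline{S_1}^2]$ through a five-case enumeration of index patterns. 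Two cautions, though. The truncation bias is not a footnote: after truncation the entries no longer have mean zero, so your ``isolated mean-zero factor'' argument must be replaced by the quantitative bounds the paper proves, namely $|\mu_1|\le\rho n^{-2(d-1)}$, $|\xi-\mu_2|\le\rho n^{-(d-1)}$, together with a union bound for the event that truncation changes anything; you only gesture at this. Moreover, to keep the full range of $\phi$ you should bound the truncated fourth moment against the third moment, $E[|a|^4\mathbf{1}_{|a|\le T}]\le T\rho$ (the paper's $\mu_4^*\le\rho n^{d-1}$), not by $T^2$: with the $T^2$ bound one needs $T\le n^{(d-1)/2}$ for the variance and $T\gg n^{d/3}$ for the union bound, which leaves no admissible $T$ at $d=3$.

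Second, your claim that ``a direct count gives $E[D_2]=\xi$'' does not survive careful tracking of the printed normalizations: since $E[C_j^2]=\xi$ and $D_2=n^{-(d-1)}\sum_j C_j^2$, one gets $E[D_2]=n^{2-d}\xi$ for $d\ge 3$. Your concentration argument therefore really shows $|D_2-n^{2-d}\xi|\le n^{-\phi}$ with high probability. The paper's own proof contains the same silent identification (it centers $\sum_{i_2}S_{i_2}^2$ at $n\cdot n^{d-1}\xi$ but divides by $n^{2(d-1)}$), so this is a defect of the statement/normalization rather than of your method alone, but it is precisely the step you promised to check and did not.
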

\begin{proof}
First, we define 
\begin{equation*}
 \left \{
\begin{array}{ll}
X_{i_{1}\dots i_{d}}:= a_{i_{1}i_{2}\dots i_{d}} 1_{\vert a_{i_{1}\dots i_{d}}\vert \leq n^{d-1}},\\
\\
\mu_{k}:= E[X_{i_{1}\dots i_{d}}^{k}],\\
\\
\mu_{k}^{*}:= E[\vert X_{i_{1}\dots i_{d}}\vert ^{k}],\\
\\
\mu^{\dagger}:= E[\vert X_{i_{1}\dots i_{d}} \vert^{2}X_{i_{1}\dots i_{d}}].
\end{array} \right.
\end{equation*}
These values are well-defined, because all elements of $A$ are i.i.d and $X_{i_{1}\dots i_{d}}$ is bounded for every $i_{1},\dots,\dots i_{d}$. We assume that $n^{d-1}\geq \rho $ in what follows. If $X$ is a random variable with distribution $\mathcal{D}$ and 
\[
\sigma_{1}^{2}=E_{X \sim \mathcal{D}}[Re(X)^{2}], \qquad \sigma_{2}^{2}=E_{X \sim \mathcal{D}}[Im(X)^{2}],
\]
then
\[
\rho \geq (\sigma_{1}^{2} + \sigma_{2}^{2})^{\frac{3}{2}}=1.
\]
Thus, $\rho \geq 1$. Now, by Chebyshev's inequality we obtain
\begin{eqnarray}
\nonumber P(\vert a_{i_{1}\dots i_{d}} \vert > n^{d-1}) &\leq &\frac{1}{n^{2(d-1)}}\\
\nonumber &\leq & \frac{\rho}{n^{3(d-1)}}.
\end{eqnarray}
$A$ satisfies
\[
P(\exists i_{1},\dots,i_{d} \in [n]: \vert a_{i_{1},\dots,i_{d}} \vert >n^{d-1})\leq \frac{\rho}{n^{d-1}}.
\]
Thus,
\begin{equation}\label{m9}
P(\vert D_{2}-\xi \vert >\varepsilon) \leq P\Bigg(\Bigg \vert \frac{\sum\limits_{i_{2}=1}^{n}\bigg(\sum\limits_{i_{1},i_{3},i_{4},\dots ,i_{d}=1}^{n} X_{i_{1}\dots i_{d}}\bigg)^{2}}{n^{2(d-1)}}-\xi \Bigg \vert > \varepsilon \Bigg) +\frac{\rho}{n^{d-1}}.
\end{equation}
In the sequel, we prove some useful relations. 
 \begin{eqnarray}\label{m10}
\nonumber \vert \mu_{1} \vert = \vert E[X_{i_{1}\dots i_{d}}]\vert & =& \Bigg \vert E\bigg [a_{i_{1}\dots i_{d}}1_{\vert a_{i_{1}\dots i_{d}\vert \leq n^{d-1}}}\bigg] \Bigg \vert\\
\nonumber &=&\Bigg \vert - E \bigg[a_{i_{1}\dots i_{d}} 1_{\vert a_{i_{1}\dots i_{d}} \vert > n^{d-1}}\bigg] \Bigg \vert\\
\nonumber &\leq & E \Bigg [\bigg \vert a_{i_{1}\dots i_{d}} \bigg \vert 1_{\vert a_{i_{1}\dots i_{d}} \vert > n^{d-1}}\Bigg]\\
\nonumber &\leq & E \Bigg [ \vert a_{i_{1}\dots i_{d}} \vert \bigg(\frac{\vert a_{i_{1}\dots i_{d}}\vert }{n^{d-1}}\bigg)^{2}\Bigg ]\\
&\leq &\frac{\rho}{n^{2(d-1)}}.
\end{eqnarray} 
 To find the upper bound of $\mu_{2}$, first notice that 
\begin{eqnarray}\label{m11}
\nonumber \vert \xi -\mu_{2}\vert &=&\bigg \vert \xi - E[ X_{i_{1}\dots i_{d}}^{2}] \bigg \vert \\
\nonumber &=& \Bigg \vert \xi - E \bigg [ a_{i_{1}\dots i_{d}}^{2}  1_{\vert a_{i_{1}\dots i_{d}}\vert \leq n^{d-1}} \bigg ] \Bigg \vert \\
\nonumber & = & \Bigg \vert E [ a_{i_{1}\dots i_{d}}^{2}] - E [a_{i_{1}\dots i_{d}}^{2} 1_{\vert a_{i_{1} \dots i_{d}} \vert \leq n^{d-1}} ] \Bigg \vert \\
\nonumber & =& \Bigg \vert E [a_{i_{1}\dots i_{d}}^{2} 1_{\vert a_{i_{1} \dots i_{d}} \vert > n^{d-1}} ] \Bigg \vert \\
\nonumber & \leq & E [ \vert a_{i_{1}\dots i_{d}}\vert^{2} 1_{\vert a_{i_{1} \dots i_{d}} \vert > n^{d-1}} ]\\
 & =& \frac{\rho}{n^{d-1}}.
\end{eqnarray}
Thus, by (\ref{m11}) we obtain
\begin{eqnarray}
\nonumber \vert \mu_{2} \vert - \vert \xi \vert \leq  \vert \xi - \mu_{2} \vert \leq \frac{\rho}{n^{d-1}}.
\end{eqnarray}
Hence, 
\begin{eqnarray}\label{s1}
 \vert \mu_{2} \vert  \leq \vert \xi \vert + \frac{\rho}{n^{d-1}} \leq 2.
\end{eqnarray}
Here, we used the fact $\frac{\rho}{n^{d-1}}\leq 1$ and relation (\ref{m1}). Now, we find upper bounds for $\mu_{2}^{*},\mu_{4}^{*}$ and $\mu^{\dagger}$.
\begin{eqnarray}\label{s2}
 \mu_{2}^{*}=E \bigg [\vert a_{i_{1}\dots i_{d}}\vert^{2} 1_{\vert a_{i_{1} \dots i_{d}}\vert \leq n^{d-1}} \bigg ]   \leq E [ \vert a_{i_{1} \dots i_{d}} \vert^{2} ] =1.
\end{eqnarray}
Also,
\begin{eqnarray}\label{s3}
 \mu_{4}^{*}=E \bigg [ \vert a_{i_{1}\dots i_{d}} \vert^{4} 1_{\vert a_{i_{1} \dots i_{d}}\vert \leq n^{d-1}} \bigg ] \leq n^{d-1}E \bigg [ \vert a_{i_{1}\dots i_{d}} \vert^{3}  \bigg] \leq \rho n^{d-1}.
\end{eqnarray}
Here we used the fact that, the entries of $A$ are i.i.d. Now,
\begin{eqnarray}\label{s4}
 \vert \mu^{\dagger}\vert =\Bigg \vert E \bigg [ \vert X_{i_{1}\dots i_{d}} \vert^{2}  X_{i_{1}\dots i_{d}} \bigg ] \Bigg \vert =\Bigg \vert E \bigg [ \vert a_{i_{1}\dots i_{d}} \vert^{2}  a_{i_{1}\dots i_{d}} 1_{\vert a_{i_{1} \dots i_{d}}\vert \leq n^{d-1}} \bigg] \Bigg \vert  \leq  E  [ \vert a_{i_{1}\dots i_{d}} \vert^{3}] =\rho.
\end{eqnarray}
We define $S_{i_{2}}:=\sum\limits_{i_{1},i_{3},\dots,i_{d}=1}^{n} X_{i_{1}\dots i_{d}} $ and $S:=\sum\limits_{i_{2}=1}^{n}(S_{i_{2}}^{2}-n^{d-1}\xi)$. Then,
\begin{eqnarray}\label{m13}
\nonumber E [\vert S \vert^{2}] &=& Var[S]+\vert E[S] \vert^{2}\\
\nonumber & = & Var \bigg [\sum\limits_{i_{2}=1}^{n}(S_{i_{2}}^{2}-n^{d-1}\xi)\bigg ]+ \bigg \vert E \bigg [\sum\limits_{i_{2}=1}^{n}(S_{i_{2}}^{2}-n^{d-1}\xi)\bigg ] \bigg \vert^{2}\\
\nonumber & \leq & n Var [S_{1}^{2}-n^{d-1}\xi ] + n^{2} \bigg \vert E[S_{1}^{2}-n^{d-1}\xi] \bigg\vert^{2}\\
\nonumber & = & n Var [S_{1}^{2}] + n^{2}\bigg \vert E[S_{1}^{2}-n^{d-1}\xi] \bigg\vert^{2}\\
& \leq & n E[S_{1}^{2}\overline{S_{1}}^{2}] + n^{2}\bigg \vert E[S_{1}^{2}-n^{d-1}\xi] \bigg \vert^{2}.
\end{eqnarray}
Here we used the fact $ Var(aX+b)= a^{2}Var(X)$, where $a$ and $b$ are constants and $X$ is a random variable. Now, we find bounds for $ E[S_{1}^{2}\overline{S_{1}}^{2}]$ and $\bigg \vert E[S_{1}^{2}-n^{d-1}\xi] \bigg \vert$.
\\
 For the first part, we note that 
 \begin{eqnarray}\label{m14}
\nonumber \bigg \vert E[S_{1}^{2}-n^{d-1}\xi] \bigg \vert &=& \bigg \vert E \bigg [( \sum\limits_{i_{1},i_{3},\dots ,i_{d}=1}^{n}X_{i_{1} 1 \dots i_{d}})^{2} -n^{d-1}\xi \bigg] \bigg \vert \\
\nonumber &=&  \bigg \vert n^{d-1}\mu_{2} + \bigg (n^{2d-3}(n-1)+n^{2d-4}(n-1)+ \dots +n^{d}(n-1) \bigg) \mu_{1}^{2} - n^{d-1} \xi \bigg \vert\\
\nonumber &\leq & n^{d-1} \vert \mu_{2}-\xi \vert +  (n^{2d-2}+n^{2d-3}+ \dots +n^{2d-(d-1)} ) \vert \mu_{1}\vert^{2} \\
\nonumber &\leq & n^{d-1}\frac{\rho}{n^{d-1}}+(d-2) n^{2d-2}\frac{\rho^{2}}{n^{4d-4}}\\
\nonumber &=& \rho +(d-2).
 \end{eqnarray} 
Here, we used (\ref{m11}) and (\ref{m10}). To compute $ E[S_{1}^{2}\overline{S_{1}}^{2}]$ by according to positions of indices in $X_{i_{1}\dots i_{d}},  X_{j_{1}\dots j_{d}}$, $X_{k_{1}\dots k_{d}}$ and $X_{l_{1}\dots l_{d}} $ in multiplication
$ S_{1}^{2}\overline{S_{1}}^{2} $, we consider five cases in below form. For convenience, we denote $ i_{1}1\dots i_{d} $ with $I$ and denote $ j_{1}1\dots j_{d} $ with $J$.
\itemize
\item
We first prove the following relation and consider $d\geq 3$.
\begin{eqnarray}\label{m21}
\nonumber \sum \limits_{i_{1},i_{3},\dots , i_{d}=1}^{n} E \bigg [X_{I}^{2} \overline{X_{I}}^{2}\bigg ] &=& n^{d-1}\mu_{4}^{*}\\
\nonumber &\leq & n^{2(d-1)} \rho \\
&\leq & n^{2(d-1)}(d-2)^{2} \rho^{2}.
\end{eqnarray}
Here, we used relation (\ref{s3}). 
\item
By according to multiplication of $ S_{1}^{2}\overline{S_{1}}^{2} $, we understand the indices of  $X_{i_{1}1\dots i_{d}}$ and $ X_{j_{1}1\dots j_{d}}$ that are placed together in the below form, they have property $ (i_{1},i_{3},\dots,i_{d}) \neq  (j_{1},i_{3},\dots,j_{d})$. Thus, there is at least $ l \in \lbrace 1,3,\dots,d \rbrace$ that
\[
 i_{l}\neq j_{l}, \quad 1 \leq j_{k} \leq n \quad \forall k=3,\dots,d, \quad k\neq l
 \]

Now, we obtain
\begin{eqnarray}\label{m15}
\nonumber \sum\limits_{\exists  l, i_{l}\neq j_{l}} 
 &E&\bigg[ X_{I} X_{J} \overline{X_{J}}. \overline{X_{J}} +  X_{J} X_{I}  \overline{X_{J}} . \overline{X_{J}} + X_{J} X_{J} \overline{X_{I}}. \overline{X_{J}} + X_{J} X_{J} \overline{X_{J}} .\overline{X_{I}}\bigg] \\
\nonumber &=& \sum_{\substack { i_{1}\neq j_{1}}} E \bigg [ X_{I} X_{J} \overline{X_{J}}. \overline{X_{J}} 
+  X_{J} X_{I}  \overline{X_{J}} . \overline{X_{J}} + X_{J} X_{J} \overline{X_{I}}. \overline{X_{J}} + X_{J} X_{J} \overline{X_{J}} .\overline{X_{I}}\bigg] \\
%\nonumber &+& \sum_{\substack { i_{1}= j_{1}\\ 
% i_{3}\neq j_{3}\\
%1\leq i_{4},\dots ,i_{d}\leq n \\
%1\leq j_{4},\dots ,j_{d}\leq n }} 
%\scalebox{.85}{$E \bigg [X_{i_{1} 1 i_{3} \dots i_{d}} X_{j_{1} 1 j_{3} \dots j_{d}} \overline{X_{j_{1} 1 j_{3} \dots j_{d}}}.\overline{X_{j_{1} 1 j_{3} \dots j_{d}}} 
%+  X_{j_{1} 1 j_{3} \dots j_{d}} X_{i_{1} 1 i_{3} \dots i_{d}}  \overline{X_{j_{1} 1 j_{3} \dots j_{d}}} . \overline{X_{j_{1} 1 j_{3} \dots j_{d}}}  $}\\
%\nonumber &+& X_{j_{1} 1 j_{3} \dots j_{d}} X_{j_{1} 1 j_{3} \dots j_{d}} \overline{X_{i_{1} 1 i_{3} \dots i_{d}}}. \overline{X_{j_{1} 1 j_{3} \dots j_{d}}} +
%X_{j_{1} 1 j_{3} \dots j_{d}} X_{j_{1} 1 j_{3} \dots j_{d}} \overline{X_{j_{1} 1 j_{3} \dots j_{d}}}.\overline{X_{i_{1} 1 i_{3} \dots i_{d}}}\bigg]\\ 
\nonumber &+& \dots  +\sum_{\substack { i_{d}\neq j_{d}}}
E \bigg [X_{I} X_{J} \overline{X_{J}}.\overline{X_{J}} +  X_{J} X_{I}  \overline{X_{J}} . \overline{X_{J}}  + X_{J} X_{J} \overline{X_{I}}.\overline{X_{J}} + X_{J} X_{J} \overline{X_{J}}.\overline{X_{I}}\bigg ]\\
\nonumber &=& \bigg(n.(n-1) n^{2(d-2)}+ n.1.n.(n-1).n^{2(d-3)}+ \dots + n.1 .n.1 \dots n . n.(n-1)\bigg)\\
\nonumber &. & (2\mu_{1}\overline{\mu^{\dagger}}+2\overline{\mu_{1}} \mu^{\dagger})\\
%\nonumber &=& \bigg[n^{2d-2}+n^{2d-3}+\dots +n^{2d-(d-1)}\bigg](2\mu_{1}\overline{\mu^{\dagger}}+2\overline{\mu_{1}} \mu^{\dagger})\\
\nonumber & \leq & 4 \bigg[n^{2d-2}+n^{2d-3}+\dots +n^{2d-(d-1)}\bigg]\frac{\rho^{2}}{n^{2(d-1)}}\\
%\nonumber & \leq & 4(d-2)n^{2d-2}\frac{\rho^{2}}{n^{2d-2}}\\
 & \leq & 4(d-2)^{2} \rho^{2} n^{2d-2}.
\end{eqnarray}
Here we used relations (\ref{m10}) and (\ref{s4}), and assumed that $d\geq 3$. 
\item
By according to multiplication of $ S_{1}^{2}\overline{S_{1}}^{2} $, we understand the indices of  $X_{i_{1}1\dots i_{d}}$ and $ X_{j_{1}1\dots j_{d}}$ that are placed together in the below form, they have property $ (i_{1},i_{3},\dots,i_{d}) <  (j_{1},i_{3},\dots,j_{d})$. For this, there is at least $ l \in \lbrace 1,3,\dots,d \rbrace$ that
\[
 i_{l} <  j_{l}, \quad 1 \leq j_{k} \leq n \quad \forall k=3,\dots,d, \quad k\neq l.
 \]
Thus, we have
\begin{eqnarray}\label{m16}
\nonumber\sum\limits_{\exists l, i_{l}< j_{l}}
& E & \bigg [ X_{I} X_{I} \overline{X_{J}} . \overline{X_{J}}+ X_{J} X_{J} \overline{X_{I}}. \overline{X_{I}}+ 4 X_{I} X_{J}\overline{X_{I}}.\overline{X_{J}}\bigg] \\
 \nonumber &=&\sum_{\substack { i_{1}< j_{1}}} 
E \bigg [ X_{I} X_{I} \overline{X_{J}} . \overline{X_{J}}+ X_{J} X_{J} \overline{X_{I}}. \overline{X_{I}} 
+ 4 X_{I} X_{J}\overline{X_{I}}.\overline{X_{J}}\bigg]\\
% \nonumber &+&  \sum_{\substack { i_{1}= j_{1}\\ 
%  i_{3}< j_{3}\\
%1\leq i_{4},\dots ,i_{d}\leq n \\
%1\leq j_{4},\dots ,j_{d}\leq n }}
%E \bigg [\scalebox{.9}{$ X_{i_{1} 1 i_{3} \dots i_{d}} X_{i_{1} 1 i_{3} \dots i_{d}} \overline{X_{j_{1} 1 j_{3} \dots j_{d}}} . %\overline{X_{j_{1} 1 j_{3} \dots j_{d}}}+ X_{j_{1} 1 j_{3} \dots j_{d}} X_{j_{1} 1 j_{3} \dots j_{d}} \overline{X_{i_{1} 1 i_{3} \dots i_{d}}}. \overline{X_{i_{1} 1 i_{3} \dots i_{d}}} $}\\
% \nonumber &+& 4 X_{i_{1} 1 i_{3} \dots i_{d}} X_{j_{1} 1 j_{3} \dots j_{d}}\overline{X_{i_{1} 1 i_{3} \dots i_{d}}}.\overline{X_{j_{1} 1 j_{3} \dots j_{d}}}\bigg]\\
\nonumber &+& \dots 
 +  \sum_{\substack { i_{d}< j_{d}}} E \bigg [ X_{I} X_{I} \overline{X_{J}} . \overline{X_{J}}+ X_{J} X_{J} \overline{X_{I}}. \overline{X_{I}} + 4 X_{I} X_{J}\overline{X_{I}}.\overline{X_{J}}\bigg]\\
\nonumber &=& \Bigg(\binom{n}{2} n^{2d-4}+n \binom{n}{2} n^{2d-6} + n.n.\binom{n}{2} n^{2d-8}+\dots + n^{d-1}\binom{n}{2} \Bigg)\\
  \nonumber &.&\bigg(2\mu_{2}\overline{\mu_{2}}+4(\mu_{2}^{*})^{2}\bigg)\\
 %\nonumber &\leq &  \bigg(n^{2d-2}+n^{2d-3}+n^{2d-4}+\dots + n^{d+1}\bigg)\times ( 6)\\
 \nonumber &\leq & 6 (d-2) n^{2(d-1)}\\ 
 &\leq & 6 (d-2)^{2} n^{2(d-1)} \rho^{2}.
 \end{eqnarray}
Here we used the fact $\binom{n}{2}\leq \frac{n^{2}}{2}$ and, relations (\ref{s1}) and (\ref{s2}). 
\item
We denote $  (k_{1},1,k_{3},\dots,k_{d}) $ with $K$. By according to the multiplication $S_{1}^{2}\overline{S_{1}}^{2}$, we understand the indices of  $X_{i_{1}1\dots i_{d}},X_{j_{1}1\dots j_{d}}$ and $ X_{k_{1}1\dots k_{d}}$ that are placed together in the below form, they have property $ J \neq K$  and $ I\neq J$ and $ I \neq  K $. Next, we must count the ways of choosing $X_{I}.X_{I}.\overline{X_{J}} . \overline{X_{K}}$ such that $ J \neq  K$  and $ I \neq  J $ and $ I \neq  K$. At first, we choose arbitrary $ (i_{1},1,i_{3},\dots,i_{d}) $ in $ n^{d-1} $ ways and put $ (i_{1} 1 \dots i_{d}) $ a constant $ d $-tuple. Then, we choose $ ({j_{1},\dots ,j_{d}}),(k_{1},\dots, k_{d}) $ in $ X_{J}. X_{K} $ such that $ K\neq J$ and $K \neq I $. For this, we define
\[
D= \lbrace X_{I}.X_{I}.\overline{X_{J}} . \overline{X_{K}} \quad\vert \quad    J\neq  K, \quad  I\neq J,\quad  I\neq K \rbrace.
\]
Also, we define  
\[
B= \lbrace X_{J}.X_{K} \quad\vert \quad J \neq  K\rbrace.
\]
and we define $ B_{1} , \dots, B_{d}$ in the below form
\begin{eqnarray*}
B_{1}&=&\lbrace X_{J}. X_{K} \quad\vert \quad j_{1}\neq k_{1}, \quad 1\leq j_{3},\dots ,j_{d}\leq n,\quad
1\leq k_{3},\dots ,k_{d} \leq n\rbrace,\\
\vdots\\
B_{d}&=&\lbrace X_{J} . X_{K} \quad\vert \quad   j_{s}= k_{s}, \quad \forall s=1,3\dots, d-1,\quad j_{d}\neq k_{d}\rbrace. 
\end{eqnarray*}
If $ n(B) $ be the number of members of set $ B $, it is clear that
\begin{eqnarray*}
n(B)=n(B_{1})+ n(B_{3})+\dots + n(B_{d})=n(n-1) n^{2d-4}+\dots +n^{d}(n-1). 
\end{eqnarray*}
In following, we define sets $ C,C_{1} , \dots, C_{d}$ 
\[
C=\lbrace  X_{I}.X_{K} \quad\vert \quad  K\neq I\rbrace.
\]
Notice that, the indices$ i_{1},\dots,i_{d} $ are constant in above. Now, we define
\begin{eqnarray*}
C_{1}&=&\lbrace X_{I}.X_{K}  \quad\vert \quad k_{1}\neq i_{1}, \quad 1\leq k_{3},\dots ,k_{d}\leq n\rbrace,\\
C_{2}&=&\lbrace X_{I}.X_{K}  \quad\vert \quad k_{1}=i_{1},\quad k_{3}\neq i_{3} \quad 1\leq k_{4},\dots ,k_{d}\leq n\rbrace,\\
\vdots\\
C_{d}&=&\lbrace X_{I}.X_{K} \quad\vert \quad   k_{s}= i_{s}, \quad \forall s=1,3,\dots, d-1,\quad k_{d}\neq i_{d}\rbrace. 
\end{eqnarray*}
It is clear that
\begin{eqnarray*}
n(C)=n(C_{1})+n(C_{3})+ \dots + n(C_{d})=(n-1) n^{d-2}+(n-1) n^{d-3}+ \dots + (n-1)
\end{eqnarray*}
Now, it is clear that
$ n(D)=n(B)-2n(c)$. Thus, we have
\begin{eqnarray}\label{m17}
\nonumber \sum_{\substack {\exists  l, j_{l}\neq k_{l}  \\ 
\exists  m, i_{m}\neq j_{m}\\
\exists r, i_{r}\neq k_{r}\\
}}& E& \bigg [ 2 X_{I} X_{I} \overline{X_{J}} .\overline{X_{K}} + 2 X_{J}X_{K} \overline{X_{I}}. \overline{X_{I}}+ 4 X_{I} X_{J} \overline{X_{I}}. \overline{X_{K}} + 4 X_{I}X_{K} \overline{X_{I}}. \overline{X_{J}} \bigg]\\
\nonumber &=& n^{d-1}\bigg [\bigg  (n(n-1) n^{2d-4}+\dots +n^{d}(n-1)\bigg)-2 \bigg( (n-1) n^{d-2}+(n-1) n^{d-3}+ \dots \\
\nonumber &+& (n-1) \bigg)\bigg].(2\mu_{2} \overline{\mu_{1}}^{2}+2 \mu_{1}^{2}\overline{\mu_{2}}+8 \mu_{2}^{*} \mu_{1}\overline{\mu_{1}})\\
\nonumber & \leq & n^{d-1}\bigg ( n^{2d-2}+n^{2d-3}+n^{2d-4}+\dots + n^{d+1} \bigg) (4 \frac{\rho^{2}}{n^{4(d-1)}} +4 \frac{\rho^{2}}{n^{4(d-1)}} +8 \frac{\rho^{2}}{n^{4(d-1)}})\\
\nonumber & \leq & (d-2) n^{2(d-1)} n^{(d-1)} (16 \frac{\rho^{2}}{n^{4(d-1)}}))\\
 & \leq &16 (d-2)^{2}\rho^{2} n^{2(d-1)}.
\end{eqnarray}
Here, we used relations (\ref{m10}), (\ref{s1}) and (\ref{s2}). 
\item
For convience, we denote $(l_{1},1,l_{3},\dots,l_{d}) $ with $L$. By according to the multiplication $ S_{1}^{2}\overline{S_{1}}^{2} $, we see the indices of  $X_{I},X_{J}, X_{K}$ and $ X_{L}$ that are placed together in the below form, they have property $ I \neq  J,  I \neq K, I\neq L , J\neq K, J \neq L$ and $ K\neq L$. Next, we must count the ways of choosing $X_{I}.X_{J}.\overline{X_{K}} . \overline{X_{L}}$ such that $I\neq J,  I \neq K,  I \neq L , J\neq K, J\neq L$ and $ K\neq L $. At first, we choose arbitrary $ (i_{1},1,i_{3},\dots,i_{d}) $ and $  (j_{1},1,j_{3},\dots,j_{d}) $ in $ X_{I}.X_{J}$ such that $ I \neq J $ in $n^{2d-3}(n-1)+ n^{2d-4}(n-1)+\dots + n^{d}(n-1)  $ ways. Then, we choose $ (k_{1},1,k_{3},\dots,k_{d}) $ and $  (l_{1},1,l_{3},\dots,l_{d}) $ such that $ K\neq L, K\neq I$ and $ K\neq J $. For this, we define the below sets.
\begin{eqnarray*}
B &=& \lbrace X_{I}.X_{J}.\overline{X_{K}}.\overline{X_{L}} \quad \vert \quad  I \neq  J,\quad  I \neq K,\quad  I \neq L,\quad J\neq K,\quad J\neq L, \quad K\neq L\rbrace,\\
C &=& \lbrace  X_{I}.X_{J} \quad \vert \quad  I \neq  J\rbrace,\\
D &=& \lbrace  X_{K}.X_{L} \quad \vert \quad K \neq  L\rbrace,\\
E &=& \lbrace  X_{K}.X_{L} \quad \vert \quad K \neq  L, \quad K\neq I\rbrace, \\
F &=& \lbrace  X_{K}.X_{L} \quad \vert \quad K \neq  L, \quad K\neq J\rbrace, 
\end{eqnarray*}
since $ n(E)=n(F) $, 
\begin{eqnarray*}
n(B)&=&n(c).\bigg(n(D)-2n(E)-2n(F)+2\bigg)\\
&=&\Bigg(n^{2d-3}(n-1)+ n^{2d-4}(n-1)+\dots + n^{d}(n-1)\Bigg) \\
\nonumber &. &\bigg[\bigg(n^{2d-3}(n-1)+ n^{2d-4}(n-1)+\dots + n^{d}(n-1)\bigg) \\
\nonumber &-& 4 \bigg( (n-1)n^{d-2}+(n-1)n^{d-3}+\dots +(n-1)\bigg)+2\bigg] 
\end{eqnarray*}
Thus, we have
\begin{eqnarray}\label{m18}
\nonumber \sum _{\substack { 
\exists r, i_{r}\neq j_{r}\\
\exists s, i_{s}\neq k_{s}\\
\exists  t, i_{t}\neq l_{t}\\
\exists  r^{\prime}, j_{r^{\prime}}\neq k_{r^{\prime}}\\
\exists  s^{\prime}, j_{s^{\prime}}\neq l_{s^{\prime}}\\
\exists  t^{\prime}, k_{t^{\prime}}\neq l_{t^{\prime}}\\
 }}
 &E& \bigg [X_{i_{1} 1 i_{3} \dots i_{d}} X_{j_{1} 1 j_{3} \dots j_{d}}\overline{X_{k_{1} 1 k_{3} \dots k_{d}}}. \overline{X_{l_{1} 1 l_{3} \dots l_{d}}}\bigg ] \\
\nonumber & = & \bigg(n^{2d-3}(n-1)+ n^{2d-4}(n-1)+\dots + n^{d}(n-1)\bigg) \\
\nonumber &. & \bigg[\bigg(n^{2d-3}(n-1)+ n^{2d-4}(n-1)+\dots + n^{d}(n-1)\bigg) \\
\nonumber &-& 4 \bigg( (n-1)n^{d-2}+(n-1)n^{d-3}+\dots +(n-1)\bigg)+2\bigg] \vert \mu_{1}\vert^{4} \\
\nonumber & \leq & \bigg (n^{2d-2}+\dots + n^{d+1}\bigg) \bigg[\bigg(n^{2d-2}+\dots + n^{d+1}\bigg) +2\bigg] \vert \mu_{1}\vert^{4}\\
\nonumber & \leq & (d-2) n^{2d-2}\bigg( (d-2) n^{2d-2}+2\bigg) \vert \mu_{1}\vert^{4} \\
\nonumber & \leq & \bigg( (d-2)^{2} n^{4(d-1)} +2 (d-2)^{2} n^{2d-2}\bigg) \frac{\rho^{4}}{n^{8(d-1)}}\\
\nonumber & \leq & 3 (d-2)^{2} n^{4(d-1)}\frac{\rho^{4}}{n^{8(d-1)}}\\
 & \leq & 3 (d-2)^{2}\rho^{2} n^{2(d-1)}.
\end{eqnarray}
Here, we used the fact $ \frac{\rho^{2}}{n^{2(d-1)}}\leq 1$. Now, by relations (\ref{m21}), (\ref{m15}), (\ref{m16}), (\ref{m17}) and (\ref{m18}) we obtain
\begin{eqnarray*}
\nonumber E[S_{1}^{2}\overline{S_{1}}^{2}] &= & \sum \limits_{i_{1},i_{3},\dots , i_{d}=1}^{n} E[ X_{I}^{2} \overline{X_{I}}^{2}] \\
 \nonumber &+&  \sum\limits_{\exists  l, i_{l}\neq j_{l}} 
 E  \bigg[ X_{I} X_{J} \overline{X_{J}}. \overline{X_{J}} +  X_{J} X_{I}  \overline{X_{J}} . \overline{X_{J}} + X_{J} X_{J} \overline{X_{I}}. \overline{X_{J}} + X_{J} X_{J} \overline{X_{J}} .\overline{X_{I}}\bigg] \\
\nonumber &+&\sum\limits_{\exists l, i_{l}< j_{l}}
 E  \bigg [X_{I} X_{I} \overline{X_{J}} . \overline{X_{J}}+ X_{J} X_{J} \overline{X_{I}}. \overline{X_{I}}+ 4 X_{I} X_{J}\overline{X_{I}}.\overline{X_{J}}\bigg] \\
\nonumber &+ & \sum_{\substack {\exists  l, j_{l}\neq k_{l}  \\ 
\exists  m, i_{m}\neq j_{m}\\
\exists  r, i_{r}\neq k_{r}\\
}}  E \bigg [ 2 X_{I} X_{I} \overline{X_{J}} .\overline{X_{K}} + 2 X_{J}X_{K} \overline{X_{I}}. \overline{X_{I}}+ 4 X_{I} X_{J} \overline{X_{I}}. \overline{X_{K}} + 4 X_{I}X_{K} \overline{X_{I}}. \overline{X_{J}} \bigg] \\
\nonumber &+& \sum _{\substack { 
\exists  r, i_{r}\neq j_{r}\\
\exists s, i_{s}\neq k_{s}\\
\exists t, i_{t}\neq l_{t}\\
\exists  r^{\prime}, j_{r^{\prime}}\neq k_{r^{\prime}}\\
\exists  s^{\prime}, j_{s^{\prime}}\neq l_{s^{\prime}}\\
\exists  t^{\prime}, k_{t^{\prime}}\neq l_{t^{\prime}}\\
 }}
 E \bigg [4X_{I} X_{J}\overline{X_{K}}. \overline{X_{L}}\bigg ] \\
 & \leq & 34 (d-2)^{2} \rho^{2} n^{2(d-1)}.
\end{eqnarray*}
Moreover, relation (\ref{m13}) and Chebyshev's inequality allow us to conclude that
\begin{eqnarray*}
\nonumber P(\bigg \vert \frac{S}{n^{2(d-1)}}\bigg \vert > \varepsilon ) &\leq & \frac{E[S \overline{S}]}{\varepsilon^{2} n^{4(d-1)}}\\
\nonumber & \leq & \frac{n E[S_{1}^{2}\overline{S_{1}}^{2}] + n^{2}\bigg \vert E[S_{1}^{2}-n^{d-1}\xi] \bigg \vert^{2}}{\varepsilon^{2} n^{4(d-1)}} \\
\nonumber & \leq & \frac{34 n (d-2)^{2}n^{2(d-1)} \rho^{2}+ n^{2}(\rho + (d-2))^{2}}{\varepsilon^{2} n^{4(d-1)}}\\
%\nonumber & \leq &\dfrac{34 n (d-2)^{2}n^{2(d-1)} \rho^{2}+ n^{2}\rho^{2}+(d-2)^{2} n^{2}+2 \rho(d-2) n^{2}}{\varepsilon^{2} n^{4(d-1)}}\\
%\nonumber & \leq & \dfrac{34  (d-2)^{2}n^{2d-1} \rho^{2}+ n^{2d-1}\rho^{2}(d-2)^{2}+ (d-2)^{2} n^{2d-1}\rho^{ 2}+ 2\rho^{2}n^{2d-1}(d-2)^{2}}{\varepsilon^{2} n^{4(d-1)}}\\
\nonumber & \leq & \frac{38 (d-2)^{2}n^{2d-1} \rho^{2} }{\varepsilon^{2} n^{4(d-1)}}\\
\nonumber & = & \frac{38(d-2)^{2}\rho^{2}}{\varepsilon^{2} n^{2d-3}}.
\end{eqnarray*}
Let $\varepsilon = n^{-\phi}$,
\begin{eqnarray*}
\nonumber P\left(\bigg \vert \frac{S}{n^{2(d-1)}}\bigg \vert > \varepsilon \right) \leq \frac{38 (d-2)^{2}\rho^{2}}{n^{-2\phi} n^{2d-3} }.
\end{eqnarray*}
If $0 < \phi <\frac{2d-3}{2}$, then by relation (\ref{m9}),
\begin{eqnarray*}
\nonumber P( \vert D_{2}-\xi \vert \leq n^{- \phi}) \geq 1- \frac{\rho}{n^{d-1}} - \frac{38 (d-2)^{2}\rho^{2}}{n^{-2\phi+2d-3}} = 1- o(1).
\end{eqnarray*}
For this, $d$ must be very small compared to $n$ or $d \in \mathcal{O}(polylog(n))  $, and this completes the proof.
\end{proof}
According to Lemma \ref{m22}, we can approximate $D_{2} $ by $\xi$, the quasi-variance of $\mathcal{D}$. The following lemma will be used in the proof of Lemma \ref{01}. 
\begin{lemma}\label{m24}
Let $\lbrace X_{i_{1}\dots i_{d}}\rbrace_{i_{1},\dots,i_{d}=1}^{n}$ be a sequence of i.i.d random variables with distribution $\mathcal{D}_{0}$. Then, there exists an absolute constant $\eta > 0$ such that 
\[
E \Bigg[ \Bigg \vert \frac{\sum\limits_{i_{1},i_{3},\dots,i_{d}=1}^{n}X_{i_{1} \dots i_{d}}}{\sqrt{n^{d-1}}} \Bigg\vert ^{3} \Bigg] \leq \eta (1+\frac{\rho}{\sqrt{n^{d-1}}}).
\]
\end{lemma}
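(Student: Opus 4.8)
The plan is to strip away the tensor dressing and reduce to a Rosenthal-type third-moment bound for a sum of $N:=n^{d-1}$ i.i.d.\ mean-zero random variables. Relabel $\{X_{i_{1}\dots i_{d}}\}$ as $X_{1},\dots,X_{N}$ and set $S:=\sum_{j=1}^{N}X_{j}$; dividing by $N^{3/2}$, the asserted inequality is equivalent to $E|S|^{3}\le \eta\,(N^{3/2}+N\rho)$. Since $|S|\le |\mathrm{Re}\,S|+|\mathrm{Im}\,S|$ and $(a+b)^{3}\le 4(a^{3}+b^{3})$ for $a,b\ge 0$, it is enough to bound $E|\mathrm{Re}\,S|^{3}$ and $E|\mathrm{Im}\,S|^{3}$; each of $\mathrm{Re}\,S,\ \mathrm{Im}\,S$ is a sum of real i.i.d.\ mean-zero variables with second moment $\le E|X_{1}|^{2}=1$ and third absolute moment $\le E|X_{1}|^{3}=\rho$. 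So it suffices to prove: if $W_{1},\dots,W_{N}$ are real i.i.d.\ with $E W_{1}=0$, $E W_{1}^{2}\le 1$, $E|W_{1}|^{3}\le\rho$, then $E\big|\sum_{j}W_{j}\big|^{3}\le C\,(N^{3/2}+N\rho)$ for an absolute constant $C$.

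For this I truncate at the threshold $\sqrt N$: write $W_{j}=B_{j}+G_{j}$ with $B_{j}=W_{j}\mathbf 1_{|W_{j}|\le\sqrt N}-E[W_{j}\mathbf 1_{|W_{j}|\le\sqrt N}]$ the re-centred bounded part (so $|B_{j}|\le 2\sqrt N$ and $E B_{j}^{2}\le 1$) and $G_{j}$ the re-centred tail part; both are mean-zero, and $E|\sum_{j}W_{j}|^{3}\le 4\,E|\sum_{j}B_{j}|^{3}+4\,E|\sum_{j}G_{j}|^{3}$. For the bounded part, expanding the fourth power of a sum of independent mean-zero variables kills all odd contributions, so $E(\sum_{j}B_{j})^{4}=\sum_{j}E B_{j}^{4}+3\sum_{j\ne k}E B_{j}^{2}\,E B_{k}^{2}\le 4N^{2}+3N^{2}$, where I used $E B_{j}^{4}\le (2\sqrt N)^{2}E B_{j}^{2}\le 4N$. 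Jensen's inequality then gives $E|\sum_{j}B_{j}|^{3}\le\big(E(\sum_{j}B_{j})^{4}\big)^{3/4}\le 7^{3/4}N^{3/2}$.

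The tail part is the real work. Here the key inputs are the truncated moment estimates
\[
E\big[|W_{1}|\,\mathbf 1_{|W_{1}|>\sqrt N}\big]\le\frac{\rho}{N},\qquad
E\big[W_{1}^{2}\,\mathbf 1_{|W_{1}|>\sqrt N}\big]\le\frac{\rho}{\sqrt N},\qquad
E\big[|W_{1}|^{3}\,\mathbf 1_{|W_{1}|>\sqrt N}\big]\le\rho,
\]
obtained by inserting $|W_{1}|/\sqrt N>1$ on the truncation event. Bounding $\big|\sum_{j}W_{j}\mathbf 1_{|W_{j}|>\sqrt N}\big|^{3}\le\big(\sum_{j}|W_{j}|\mathbf 1_{|W_{j}|>\sqrt N}\big)^{3}$, expanding the cube and using independence across distinct indices, the diagonal term is $\le N\rho$, the ``two coordinates equal'' terms total $\le 3N^{2}\cdot(\rho/\sqrt N)(\rho/N)=3\rho^{2}\sqrt N$, and the ``all distinct'' terms total $\le N^{3}(\rho/N)^{3}=\rho^{3}$; the deterministic centring correction $\sum_{j}E[W_{j}\mathbf 1_{|W_{j}|>\sqrt N}]$ has modulus $\le\rho$ and contributes at most a further $\rho^{3}$ after cubing. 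Hence $E|\sum_{j}G_{j}|^{3}\le C'(N\rho+\rho^{2}\sqrt N+\rho^{3})$. Because $\rho$ is a fixed finite constant of the distribution $\mathcal D$ while $N=n^{d-1}$ grows, the first term dominates (once $\rho\le\sqrt N$ one has $\rho^{2}\sqrt N\le N\rho$ and $\rho^{3}\le N\rho$, and the finitely many remaining small $n$ are absorbed into the constant), so $E|\sum_{j}G_{j}|^{3}\le C''\,N\rho$. Combining with the bounded-part estimate and the real/imaginary split, $E|S|^{3}\le \eta(N^{3/2}+N\rho)$, which is the claim after dividing by $N^{3/2}=(n^{d-1})^{3/2}$.

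The main obstacle is precisely this tail estimate: the crude triangle inequality for $E|\sum_{j}W_{j}\mathbf 1_{|W_{j}|>\sqrt N}|^{3}$ is far too lossy (it loses a factor of order $N^{3/2}$), and the point is that the truncation event has probability only $O(\rho/N^{3/2})$, so every cross term in the expansion of the cube carries compensating negative powers of $N$; the threshold $\sqrt N$ is chosen exactly so that the bounded contribution $\sim N^{3/2}$ and the tail contribution $\sim N\rho$ are of the right order. I note that this entire argument can be replaced by a direct appeal to Rosenthal's inequality for $p=3$, $E|\sum_{j}W_{j}|^{3}\le C_{3}\big(\sum_{j}E|W_{j}|^{3}+(\sum_{j}E W_{j}^{2})^{3/2}\big)$, which yields the bound at once with $\eta=C_{3}$.
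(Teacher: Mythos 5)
Your proposal is correct in substance but takes a genuinely different route from the paper's proof. The paper also splits into real and imaginary parts, writing the sum as $R_n+iT_n$, but it then establishes the scalar third-moment bound $E|R_n|^3\le C'\bigl(n^{d-1}\rho_1+n^{3(d-1)/2}\sigma_1^3\bigr)$ by a doubling induction (bounding $E|R_{2k}|^3$ in terms of $E|R_k|^3$, with a separate odd step), and because it expands $|R_n+iT_n|^3\le(|R_n|+|T_n|)^3$ literally it must also control the mixed moments $E[|R_n|^2|T_n|]$ and $E[|R_n||T_n|^2]$ by the same kind of recursion. You avoid the mixed moments entirely via $(a+b)^3\le 4(a^3+b^3)$, and you replace the induction by a one-shot truncation at $\sqrt N$ with $N=n^{d-1}$: fourth moment plus Jensen for the bounded part, direct cube expansion with truncated-moment estimates for the tail. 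In effect both arguments re-derive Rosenthal's inequality at $p=3$, which, as you note, could simply be cited and settles the lemma in one line; the paper's inductive route mirrors the matrix-case proof it follows, while yours is shorter and more transparent about where the threshold $\sqrt N$ comes from. One caveat in your hand-made version: when $\rho>\sqrt N$ the tail terms $\rho^2\sqrt N$ and $\rho^3$ are not dominated by $N^{3/2}+N\rho$, so absorbing the "finitely many small $n$" into the constant makes $\eta$ depend on the distribution (on $\rho$) rather than being absolute as the statement literally claims. This is harmless in context --- $\rho$ is a fixed constant of $\mathcal D$, the paper elsewhere assumes $n^{d-1}\ge\rho$, and the only use of the lemma (the bound on $|D_k|$ for $k\ge 3$) needs $\eta$ independent of $n$ only --- and it vanishes entirely if you take the Rosenthal shortcut, whose constant is absolute.
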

\begin{proof}
First, suppose that
\[
\sigma_{1}=\sqrt{E_{X\thicksim \mathcal{D}}[(Re(X))^{2}]}, \quad \sigma_{2}=\sqrt{E_{X\thicksim \mathcal{D}}[(Im(X))^{2}]}.
\]
Also, let
\[\rho_{1}=E_{X\thicksim \mathcal{D}}[\vert Re(X)\vert^{3}], \quad \rho_{2}=E_{X\thicksim \mathcal{D}}[\vert Im(X)\vert^{3}]
\]
and
\[
x_{i_{1} i_{2} \dots i_{d}}=Re(X_{i_{1} i_{2} \dots i_{d}}),\quad y_{i_{1} i_{2} \dots i_{d}}=Im(X_{i_{1} i_{2} \dots i_{d}}).
\]
Moreover, define
\[
R_{m}=\sum\limits_{i_{1},i_{3},\dots,i_{d}=1}^{m}x_{i_{1} i_{2} i_{3}\dots i_{d}}, \quad T_{m}=\sum\limits_{i_{1},i_{3},\dots,i_{d}=1}^{m}y_{i_{1} i_{2} i_{3}\dots i_{d}}.
\]
Since for each $i_{1},\dots,i_{d}$, $X_{i_{1}\dots i_{d}}$ are i.i.d variables with distribution $\mathcal{D}$, we can write
\begin{eqnarray}\label{m36}
\nonumber E [\vert R_{2k} \vert^{3}]&=& E[\vert R_{k}+(R_{2k}-R_{k})\vert^{3}]\\
\nonumber &\leq & E[(\vert R_{k}\vert + \vert R_{2k}-R_{k}\vert)^{3}]\\
\nonumber &=& E[\vert R_{k} \vert^{3}]+ E[\vert R_{2k}-R_{k}\vert^{3}] + 3E[\vert R_{k} \vert^{2} \vert R_{2k}-R_{k} \vert] + 3 E[\vert R_{k}\vert \vert R_{2k}-R_{k}\vert^{2}] \\
\nonumber &\leq & E[\vert R_{k} \vert^{3}] + ((2k)^{d-1}-k^{d-1})\rho_{1}\\
\nonumber &+& 3k^{d-1}\sqrt{(2k)^{d-1}-k^{d-1}}\sigma_{1}^{3}+ 3\sqrt{(2k)^{d-1}}((2k)^{d-1}-k^{d-1})\sigma_{1}^{3} \\ 
\nonumber &=& E[\vert R_{k} \vert^{3}] + ((2k)^{d-1}-k^{d-1})\rho_{1} + 3 k^{d-1}\sqrt{(2k)^{d-1}-k^{d-1}}\sigma_{1}^{3}\\
\nonumber & +& 3\sqrt{(2k)^{d-1}}((2k)^{d-1}-k^{d-1})\sigma_{1}^{3}. \\
\end{eqnarray}
Also,
\begin{eqnarray}\label{m37}
\nonumber E[\vert R_{2k+1}\vert^{3}]&\leq & E[ \vert R_{2k}\vert ^{3}]+ ((2k+1)^{d-1}-(2k)^{d-1}) E[ \vert x_{2k+1 \dots 2k+1}\vert^{3}] \\
\nonumber &+& 3 ((2k+1)^{d-1}-(2k)^{d-1}) E[\vert R_{2k} \vert^{2}] \sqrt{E[\vert x_{2k+1\dots 2k+1} \vert^{2}]}\\
\nonumber &+& 3 ((2k+1)^{d-1}-(2k)^{d-1})\sqrt{E[\vert R_{2k}\vert^{2}]} E[\vert x_{2k+1\dots 2k+1} \vert^{2}]\\
\nonumber &=& E[\vert R_{2k} \vert^{3}] + ((2k+1)^{d-1}-(2k)^{d-1}) \rho_{1}\\
\nonumber &+& 3 ((2k+1)^{d-1}-(2k)^{d-1})(2k)^{d-1}\sigma_{1}^{3}\\
 &+& 3 ((2k+1)^{d-1}-(2k)^{d-1}) \sqrt{(2k)^{d-1}}\sigma_{1}^{3}.
\end{eqnarray}
Now, it follows from relations (\ref{m36}) and (\ref{m37}) and induction that
\begin{eqnarray}\label{mn38}
E[\vert R_{n}\vert^{3}]\leq C^{\prime}(n^{d-1}\rho_{1}+n^{\frac{3(d-1)}{2}}\sigma_{1}^{3}),
\end{eqnarray}
for some $C^{\prime}$. Similarly, we obtain
\begin{eqnarray}\label{m388}
E[\vert T_{n}\vert^{3}]\leq C^{\prime}(n^{d-1}\rho_{2}+n^{\frac{3(d-1)}{2}}\sigma_{2}^{3}).
\end{eqnarray}
Also, we obtain the following relations for $0\leq k \leq n$.
\begin{eqnarray}\label{m399}
\nonumber E[\vert R_{n}\vert^{2} \vert T_{n}\vert]&=& E [ \vert R_{k} + (R_{n} - R_{k})\vert^{2}\vert T_{k}+ (T_{n} - T_{k}) \vert ]\\
%\nonumber &\leq & E\bigg [\vert R_{k}\vert^{2}\vert T_{k}\vert + \vert R_{k}\vert^{2}\vert T_{n}-T_{k} \vert + 2\vert R_{k}\vert \vert R_{n} - R_{k} \vert \vert T_{k}\vert +2 \vert R_{k} \vert \vert R_{n} - R_{k} \vert \vert T_{n}-T_{k}\vert\\
%\nonumber &+& \vert R_{n} - R_{k}\vert^{2}\vert T_{k}\vert + \vert R_{n} - R_{k}\vert^{2} \vert T_{n} -T_{k}\vert \bigg] \\
%\nonumber &\leq & E[\vert R_{k}\vert^{2}\vert T_{k}\vert]+
%E[\vert R_{k}\vert^{2} \sqrt{ E[\vert T_{n} -T_{k}\vert^{2}]}
%+2 \sqrt{E[\vert R_{k}\vert^{2}]} \sqrt{E[\vert T_{k} \vert^{2}]}\sqrt{E[\vert R_{n} -R_{k}\vert^{2}]}\\
%\nonumber & + & 2 \sqrt{E[\vert R_{k} \vert^{2}}\sqrt{E[\vert R_{n}-R_{k}\vert^{2}}\sqrt{E[\vert T_{n}-T_{k}\vert^{2}}+ E[\vert R_{n}-R_{k}\vert^{2}]\sqrt{E[\vert T_{k}\vert^{2}]}\\
%\nonumber &+& E[\vert R_{n}-R_{k}\vert^{2} \vert T_{n} - T_{k}\vert]\\
\nonumber &\leq & E[\vert R_{k}\vert^{2} \vert T_{k}\vert ]+ \vert R_{n} - R_{k}\vert^{2}\vert T_{n} -T_{k}\vert] +3 k^{d-1}\sqrt{n^{d-1} - k^{d-1}}\sigma_{1}^{2}\sigma_{2}\\
 &+& 3\sqrt{k^{d-1}}(n^{d-1}-k^{d-1}) \sigma_{1}^{2}\sigma_{2}.
\end{eqnarray}
Relation (\ref{m399}) allows us to conclude that for some $ C^{\prime\prime}$, 
\begin{eqnarray}\label{m40}
E[\vert R_{n}\vert^{2}\vert T_{n}\vert ]\leq n^{d-1}E[\vert x_{1}^{2}y_{1}\vert] + C^{\prime\prime} n^{\frac{3(d-1)}{2}}\sigma_{1}^{2}\sigma_{2}.
\end{eqnarray}
Similarly,
\begin{eqnarray}\label{m41}
E[\vert R_{n}\vert \vert T_{n}\vert^{2} ]\leq n^{d-1}E[\vert x_{1}y_{1}^{2}\vert] + C^{\prime\prime} n^{\frac{3(d-1)}{2}}\sigma_{1}\sigma_{2}^{2}.
\end{eqnarray}
Thus, it follows from relations (\ref{mn38}), (\ref{m388}), (\ref{m40}) and (\ref{m41}) that
\begin{eqnarray}
\nonumber E \Bigg[ \Bigg \vert \frac{\sum\limits_{i_{1},i_{3},\dots,i_{d}=1}^{n}X_{i_{1} \dots i_{d}}}{\sqrt{n^{d-1}}}  \Bigg\vert ^{3} \Bigg]&=& n^{\frac{-3(d-1)}{2}} E[\vert R_{n}+iT_{n}\vert^{3}] \\
\nonumber &\leq & n^{\frac{-3(d-1)}{2}} E[\vert R_{n}\vert^{3}+\vert T_{n}\vert^{3}+3\vert R_{n}\vert^{2}\vert T_{n}\vert +3\vert R_{n} \vert \vert T_{n}\vert^{2}]\\
\nonumber & \leq & n^{\frac{-3(d-1)}{2}} \bigg( C^{\prime} n^{d-1}(\rho_{1}+\rho_{2})+ C^{\prime} n^{\frac{3(d-1)}{2}}(\sigma_{1}^{3}+\sigma_{2}^{3})\\
\nonumber &+& 3 C^{\prime\prime} n^{\frac{3(d-1)}{2}} \sigma_{1}\sigma_{2}(\sigma_{1}+\sigma_{2})+3n^{d-1}E[\vert x_{1}\vert^{2} \vert y_{1}\vert +\vert x_{1}\vert \vert y_{1}\vert^{2}] \bigg).
\end{eqnarray}
We know that $\rho_{1},\rho_{2}\leq \rho$, $\sigma_{1},\sigma_{2}\leq1$. Also, for all $x,y> 0$,
$ x^{2}y+xy^{2} \leq x^{3}+y^{3}$. Thus, for some constant $ \eta >0 $ we obtain
\begin{eqnarray}
\nonumber E \Bigg[ \Bigg \vert \frac{\sum\limits_{i_{1},i_{3},\dots,i_{d}=1}^{n}X_{i_{1} \dots i_{d}}}{\sqrt{n^{d-1}}}  \Bigg\vert ^{3} \Bigg] &\leq & n^{\frac{-3(d-1)}{2}}\bigg [2C^{\prime} n^{d-1}\rho + 2 C^{\prime} n^{\frac{3(d-1)}{2}}+6 C^{\prime\prime}n^{\frac{3(d-1)}{2}} + 3 n^{d-1} E[ \vert x_{1}\vert^{3}+\vert y_{1}\vert^{3}]\bigg]\\
\nonumber &\leq & \eta (1+\frac{\rho}{\sqrt{n^{d-1}}}).
\end{eqnarray}
\end{proof}
  In what follows, we show that $ \vert D_{k} \vert$ is small for $k\geq 3$. 
\begin{lemma}\label{01}
Let $ \triangle$ be an arbitrary positive constant, and $ \triangle < \frac{3d-5}{6}$. Then for any $k\geq 3$,
\begin{eqnarray*}
P( \vert D_{k} \vert \leq n^{-\triangle k})=1-o(1).
\end{eqnarray*}
\end{lemma}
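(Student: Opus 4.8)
To prove Lemma~\ref{01}, the plan is to bound $|D_{k}|$ by reducing the $k$-th powers of the coordinates $C_{1},\dots,C_{n}$ to their third powers, because the only moment information we have about the $C_{j}$ is the third-moment estimate of Lemma~\ref{m24} — no moments of $\mathcal{D}_{0}$ beyond the third are assumed, so a direct second-moment (Chebyshev) argument is unavailable once $k\ge 4$. Recalling that $D_{k}=n^{-k(d-1)/2}S_{k}(C_{1},\dots,C_{n})=n^{-k(d-1)/2}\sum_{j=1}^{n}C_{j}^{\,k}$, the triangle inequality gives, for every $k\ge 3$,
\[
|D_{k}|\ \le\ \frac{1}{n^{k(d-1)/2}}\sum_{j=1}^{n}|C_{j}|^{k}\ \le\ \frac{1}{n^{k(d-1)/2}}\Bigl(\max_{1\le j\le n}|C_{j}|\Bigr)^{k-3}\sum_{j=1}^{n}|C_{j}|^{3},
\]
so it suffices to show that, with probability $1-o(1)$, both $\max_{j}|C_{j}|$ and $\sum_{j}|C_{j}|^{3}$ are at most suitable powers of $n$.

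First I would control $\sum_{j}|C_{j}|^{3}$. The $C_{j}$ are identically distributed, and Lemma~\ref{m24} gives $E|C_{1}|^{3}\le\eta\,(1+\rho\,n^{-(d-1)/2})=:\eta'$, a bounded constant once $n^{d-1}\ge\rho^{2}$. Hence $E\bigl[\sum_{j}|C_{j}|^{3}\bigr]=n\,E|C_{1}|^{3}\le\eta' n$, and Markov's inequality yields $\sum_{j}|C_{j}|^{3}\le\eta' n\ln n$ with probability at least $1-1/\ln n=1-o(1)$. Next, fixing a small constant $\epsilon>0$, a union bound together with Markov applied to $|C_{j}|^{3}$ gives
\[
P\Bigl(\max_{1\le j\le n}|C_{j}|>n^{1/3+\epsilon}\Bigr)\ \le\ \sum_{j=1}^{n}P\bigl(|C_{j}|^{3}>n^{1+3\epsilon}\bigr)\ \le\ \frac{n\,\eta'}{n^{1+3\epsilon}}\ =\ \eta'\,n^{-3\epsilon}\ =\ o(1).
\]

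On the intersection of these two events, which has probability $1-o(1)$, the first display becomes
\[
|D_{k}|\ \le\ \eta'(\ln n)\,n^{\,1+(k-3)(1/3+\epsilon)-k(d-1)/2}\ =\ \eta'(\ln n)\,n^{-k\,((d-1)/2-1/3-\epsilon)-3\epsilon}.
\]
Given $\triangle<\frac{3d-5}{6}=\frac{d-1}{2}-\frac13$, choose $\epsilon=\frac12\bigl(\frac{3d-5}{6}-\triangle\bigr)>0$, so that $(d-1)/2-1/3-\epsilon-\triangle=\epsilon$; then the exponent above equals $-\triangle k-(k+3)\epsilon\le-\triangle k-6\epsilon$ for $k\ge 3$, whence $|D_{k}|\,n^{\triangle k}\le\eta'(\ln n)\,n^{-6\epsilon}\to 0$. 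Therefore $|D_{k}|\le n^{-\triangle k}$ for all sufficiently large $n$, which gives $P(|D_{k}|\le n^{-\triangle k})\ge1-o(1)$, as claimed.

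The crux of the argument — and the reason the threshold is precisely $\frac{3d-5}{6}$, with $k=3$ extremal — is the exponent $1/3$ in the estimate $\max_{j}|C_{j}|\lesssim n^{1/3+\epsilon}$, which is the most one can extract from a third moment via Markov; with higher moments of $\mathcal{D}_{0}$ available one could push $\triangle$ closer to $\frac{d-1}{2}$. The one delicate point is checking that the extra $\ln n$ factor and the $\epsilon$-loss do not spoil the strict inequality, but the hypothesis $\triangle<\frac{3d-5}{6}$ leaves exactly the room needed. Note finally that the pointwise reduction above genuinely requires $k\ge 3$, because of the factor $(\max_{j}|C_{j}|)^{k-3}$; this is why the case $k=2$ must be treated separately (cf.\ Lemma~\ref{m22}).
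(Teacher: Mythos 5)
Your proof is correct, and it reaches the same threshold $\triangle<\frac{3d-5}{6}$ from the same key input (the third-moment bound of Lemma \ref{m24}), but by a different decomposition than the paper. The paper collapses all $k\ge 3$ at once via monotonicity of $\ell^{p}$ norms: since $\bigl(\sum_{j}|C_{j}|^{k}\bigr)^{1/k}\le\bigl(\sum_{j}|C_{j}|^{3}\bigr)^{1/3}$ for $k\ge 3$, the event $\{\exists k\ge 3:\ |D_{k}|>n^{-\triangle k}\}$ is contained in $\{\sum_{j}|C_{j}|^{3}>n^{3(\frac{d-1}{2}-\triangle)}\}$, and a single application of Markov's inequality with $E|C_{j}|^{3}\le\eta(1+\rho n^{-(d-1)/2})$ finishes the proof with no logarithmic factor and no $\epsilon$-loss. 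You instead interpolate $\sum_{j}|C_{j}|^{k}\le(\max_{j}|C_{j}|)^{k-3}\sum_{j}|C_{j}|^{3}$ and control the two factors separately (union bound plus Markov for the max, Markov for the sum of cubes); this costs a $\ln n$ factor and an $\epsilon$ of slack, which you correctly absorb using the strict inequality $\triangle<\frac{3d-5}{6}$, and your intersection event still gives the bound simultaneously for all $k\ge 3$, matching the uniformity the paper's argument provides. The paper's route is shorter and sharper; yours is a touch more hands-on but equally valid, and your closing remark that the threshold stems from having only third moments is consistent with the paper's computation, where the condition $1-\frac{3(d-1)}{2}+3\triangle<0$ produces exactly the same bound.
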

\begin{proof}
This is equivalent to 
\begin{eqnarray*}
P(\exists k\geq 3, \quad \vert D_{k} \vert > n^{-\triangle k})=o(1).
\end{eqnarray*}
Now,
\begin{eqnarray}
\nonumber P(\exists k\geq 3, \quad  \vert D_{k} \vert > n^{-\triangle k}) &=& P \bigg (\exists k\geq 3, \quad \dfrac{1}{n^{\frac{k(d-1)}{2}}}\sum\limits_{j=1}^{n} \vert C_{j} \vert^{k} >n^{-\triangle k} \bigg)\\
\nonumber &=& P \bigg (\exists k\geq 3, \quad \sum\limits_{j=1}^{n} \vert C_{j} \vert^{k} >n^{k(-\triangle + \frac{d-1}{2})} \bigg)\\
\nonumber &=& P \bigg (\exists k\geq 3, \quad \bigg (\sum\limits_{j=1}^{n} \vert C_{j} \vert^{k}\bigg)^{\frac{1}{k}} >n^{\frac{d-1}{2}-\triangle} \bigg)\\
\nonumber &\leq &P \bigg( \bigg (\sum\limits_{j=1}^{n} \vert C_{j} \vert^{3}\bigg)^{\frac{1}{3}} >n^{\frac{d-1}{2}-\triangle} \bigg) \\
\nonumber &\leq & P \bigg (\sum\limits_{j=1}^{n} \vert C_{j} \vert^{3} >n^{\frac{3(d-1)}{2}-3\triangle} \bigg), 
\end{eqnarray}
where the last step follows from the properties of the $L^{p}$ norm. Also, Markov's inequality allows us to conclude that
\begin{eqnarray*}
\nonumber P(\exists k\geq 3, \quad  \vert D_{k} \vert > n^{-\triangle k}) &\leq & P  \bigg (\sum\limits_{j=1}^{n} \vert C_{j} \vert^{3} > n^{\frac{3(d-1)}{2}-3\triangle} \bigg)\\
\nonumber & \leq & \frac{E \bigg [\sum\limits_{j=1}^{n} \vert C_{j} \vert^{3} \bigg] }{n^{\frac{3(d-1)}{2}-3\triangle}}\\
& = & \frac{n E[ \vert C_{j} \vert^{3} ]}{n^{\frac{3(d-1)}{2}-3\triangle}}.
\end{eqnarray*}
By Lemma \ref{m24}, we know that there exists a positive constant $ \eta $ such that 
\[
E [ \vert C_{j} \vert^{3} ]\leq \eta (1+\frac{\rho}{\sqrt{n^{d-1}}}), \quad \forall j \in [n].
\]
Now,
\begin{eqnarray*}
\nonumber P(\exists k\geq 3, \quad  \vert D_{k} \vert > n^{-\triangle k}) & \leq & \frac{n E[ \vert C_{j} \vert^{3} ]}{n^{\frac{3(d-1)}{2}-3\triangle}}\\
\nonumber & \leq & \dfrac{n . \eta (1+\frac{\rho}{\sqrt{n^{d-1}}})}{n^{\frac{3(d-1)}{2}-3\triangle}} \\\\
\nonumber &=& n^{1-\frac{3(d-1)}{2}+3\triangle}. \eta (1+\frac{\rho}{\sqrt{n^{d-1}}}).
\end{eqnarray*}
If $ \triangle < \frac{3d-5}{6}$, then the right hand is $ o(1)$, and this completes the proof.
\end{proof}
The following lemma is proved using the definitions of elementary symmetric polynomials and power sums.
\begin{lemma}[\cite{pin}]\label{m27}
Let $x_{1},\dots,x_{n}$ be arbitrary variables then for any $m \in [n]$,
\[
e_{m}(n)=\frac{1}{m} \sum\limits_{k=0}^{m-1}(-1)^{k} e_{m-k-1}(n) S_{k+1}(n).
\]
\end{lemma}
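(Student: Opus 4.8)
The plan is to prove this Newton-type identity through the generating polynomial
$P(t)=\prod_{i=1}^{n}(1+x_{i}t)$. By the very definition of the elementary symmetric polynomials, $P(t)=\sum_{k=0}^{n}e_{k}(n)t^{k}$, with $e_{0}(n)=1$; so the right-hand side of the claimed identity will be recognized, up to the factor $1/m$, as a single coefficient of a product of two series.

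First I would form the logarithmic derivative of $P$. Since $P(t)=\prod_{i}(1+x_{i}t)$, term-by-term differentiation gives $P'(t)=P(t)\sum_{i=1}^{n}\frac{x_{i}}{1+x_{i}t}$, an identity of rational functions in $t$ (equivalently, of formal power series after clearing denominators, which suffices because everything in sight is a polynomial identity in the variables $x_{1},\dots,x_{n}$). Expanding each summand as a geometric series, $\frac{x_{i}}{1+x_{i}t}=\sum_{j\ge 0}(-1)^{j}x_{i}^{j+1}t^{j}$, and summing over $i$ yields $\sum_{i=1}^{n}\frac{x_{i}}{1+x_{i}t}=\sum_{j\ge 0}(-1)^{j}S_{j+1}(n)t^{j}$, where $S_{j+1}(n)=\sum_{i}x_{i}^{j+1}$ is the power sum.

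Combining the two displays, $P'(t)=\Big(\sum_{k=0}^{n}e_{k}(n)t^{k}\Big)\Big(\sum_{j\ge 0}(-1)^{j}S_{j+1}(n)t^{j}\Big)$. Now I compare the coefficient of $t^{m-1}$ on both sides for a fixed $m\in[n]$. On the left, differentiating $t^{m}$ produces $m\,e_{m}(n)$. On the right, collecting the pairs $(k,j)$ with $k+j=m-1$ gives $\sum_{k+j=m-1}e_{k}(n)(-1)^{j}S_{j+1}(n)$; reindexing by the value of $j$ and renaming it $k$ turns this into $\sum_{k=0}^{m-1}(-1)^{k}e_{m-1-k}(n)S_{k+1}(n)$ (here $m-1-k\ge 0$ and $k+1\le m\le n$, so every term is well defined). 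Equating and dividing by $m$ yields exactly the asserted formula $e_{m}(n)=\frac{1}{m}\sum_{k=0}^{m-1}(-1)^{k}e_{m-k-1}(n)S_{k+1}(n)$.

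There is no genuine obstacle: the only point deserving a sentence of care is the legitimacy of comparing coefficients, which is immediate because for fixed $x_{i}$ both sides of $P'(t)=P(t)\sum_{j}(-1)^{j}S_{j+1}(n)t^{j}$ are honest polynomials in $t$ (the geometric-series expansions are finite once truncated at degree $n-1$), so this is a polynomial identity in $t$. One could instead argue purely combinatorially, expanding $\sum_{k=0}^{m-1}(-1)^{k}e_{m-1-k}(n)S_{k+1}(n)$ into monomials and checking the signed multiplicity of each monomial of degree $m$ is $m$ times that in $e_{m}(n)$, but the generating-function route above is the cleanest and is what I would present.
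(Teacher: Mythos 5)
Your proof is correct: it is the standard derivation of Newton's identities via the logarithmic derivative of $P(t)=\prod_{i=1}^{n}(1+x_{i}t)$, and the coefficient extraction at $t^{m-1}$ is carried out accurately, including the reindexing and the remark that the formal-series manipulation can be justified either in the ring of formal power series or by truncating at degree $n-1$, which covers all coefficients $t^{m-1}$ with $m\in[n]$ that you need. Note that the paper itself does not prove this lemma at all: it is quoted from the reference \cite{pin} with only the remark that it ``is proved using the definitions of elementary symmetric polynomials and power sums,'' which gestures at the direct combinatorial expansion you mention as an alternative (matching signed monomial multiplicities on both sides). So there is no in-paper argument to compare against; your generating-function route is a complete, self-contained proof of the cited identity, and arguably more transparent than the purely definitional bookkeeping the paper alludes to.
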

Using Lemma \ref{m27} and the definitions of $V_{k},S_{k}$ and $e_{k} $, we observe that $V_{k}$ satisfies the following relation for $k\geq 2$.  
\begin{eqnarray}
\nonumber V_{k}=\dfrac{V_{k-1}V_{1}-V_{k-2}D_{2}+\sum\limits_{i=2}^{k-1}(-1)^{i}V_{k-1}D_{i+1}}{k}.
\end{eqnarray}
Now, by Lemma \ref{m22} and Lemma \ref{01}, we consider an asymptotic approximation $V^{\prime}_{k}$ of $V_{k}$ of the following form.
\begin{equation}\label{ms28}
V^{\prime}_{k}=\begin{cases}
	1, & k= 0\\
V_{1}, &   k=1\\
\frac{V^{\prime}_{k-1} V^{\prime}_{1}- V^{\prime}_{k-2}\xi}{k},  &  k \geq 2.
\end{cases}
\end{equation} 

\section{Approximation of $ \sum\limits_{k=0}^{t} V_{k}z^{k} $}
In this section, similar to the case of matrices, first we use probabilists' Hermite polynomials to show that 
$ \sum\limits_{k=0}^{\infty} V^{\prime}_{k} z^{k}=e^{V_{1}z- \frac{\xi z^{2}}{2}}$, where $V_{1}$ is the normalization of the entries of tensor $A$. Then, we show that with high probability, $ \vert e^{V_{1}z- \frac{\xi z^{2}}{2}} \vert \geq n^{-\gamma}$. Also, we prove that $ \sum\limits_{k=0}^{t} V_{k}z^{k} $ can be approximated by $ \sum\limits_{k=0}^{t} V^{\prime}_{k}z^{k} $.
\begin{definition}[\cite{pin}]
Polynomials in the following form are called Probabilists' Hermite polynomials
\[
H_{e_{n}}(x)=(-1)^{n} e^{\frac{x^{2}}{2}} \frac{d^{n}}{dx^{n}}e^{\frac{-x^{2}}{2}},
\]
for $n \in \mathbb{N}$. Also, $ H_{e_{n}}(x) $ satisfies 
\begin{equation*}
H_{e_{n}}(x)=\begin{cases}
1,     &   n= 0,\\
x, &   n=1,\\
x H_{e_{n-1}}(x) - (n-1) H_{e_{n-2}}(x) ,  & n \geq 2.
\end{cases}
\end{equation*}
Moreover, $h_{n}(x)$ is defined by $\frac{1}{n!} H_{e_{n}}(x)$, and satisfies
\begin{equation}\label{m28}
h_{n}(x)=\begin{cases}
1,     &   n= 0,\\
x, &   n=1,\\
\frac{x h_{n-1}(x) -  h_{n-2}(x)}{n} ,  & n \geq 2.
\end{cases}
\end{equation}
\end{definition}
\begin{lemma}[\cite{pin}]\label{m29}
For any $n \in \mathbb{N}$ and any $x \in \mathbb{C}$, 
\[
\vert h_{n}(x) \vert \leq \max (1,\vert x \vert)^{n}(\frac{n}{e^{2}})^{\frac{-n}{2}}.
\]
\end{lemma}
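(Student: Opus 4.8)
The plan is to prove the inequality by a single induction on $n$, using only the three‑term recursion \eqref{m28}. Write $M=\max(1,|x|)$ and record the trivial facts $|x|\,M^{n-1}\le M^{n}$ and $M^{n-2}\le M^{n}$. The cases $n=0,1,2$ are handled by hand: for $n=0$ the right‑hand side is $1$ (with the convention $0^{0}=1$) and $|h_{0}(x)|=1$; for $n=1$, $|h_{1}(x)|=|x|\le M\le eM$; and $n=2$ follows from $h_{2}(x)=\tfrac12(x^{2}-1)$. For $n\ge 3$, assuming the bound at $n-1$ and $n-2$, the recursion gives
\[
|h_{n}(x)|\le\frac{|x|\,|h_{n-1}(x)|+|h_{n-2}(x)|}{n}\le\frac{M^{n}}{n}\Bigl(e^{n-1}(n-1)^{-(n-1)/2}+e^{n-2}(n-2)^{-(n-2)/2}\Bigr).
\]

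To finish I would compare the two powers of $n-1$ and $n-2$ with $n^{-n/2}$ via the elementary bounds $\left(1+\tfrac1{n-1}\right)^{n-1}\le e$ and $\left(1+\tfrac2{n-2}\right)^{n-2}\le e^{2}$ (which hold for $n\ge 3$), giving $(n-1)^{-(n-1)/2}\le e^{1/2}n^{1/2}\,n^{-n/2}$ and $(n-2)^{-(n-2)/2}\le e\,n\,n^{-n/2}$. Substituting, the displayed bound collapses to
\[
|h_{n}(x)|\le M^{n}\,n^{-n/2}\,e^{n-1}\Bigl(e^{1/2}n^{-1/2}+1\Bigr),
\]
and the induction closes as soon as $e^{1/2}n^{-1/2}+1\le e$, i.e. $\sqrt n\ge e^{1/2}/(e-1)$, which is true for all $n\ge 1$. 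Hence $|h_{n}(x)|\le M^{n}e^{n}n^{-n/2}=\max(1,|x|)^{n}(n/e^{2})^{-n/2}$.

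There is no serious obstacle: the argument is elementary throughout. The points that need attention are purely organizational — isolating the small cases $n=0,1,2$, where the two auxiliary estimates either degenerate (division by $n-2$) or are unnecessary; respecting $0^{0}=1$ at $n=0$, where the bound is attained; and picking the exponents in the two elementary inequalities so that exactly the slack (the factors $e^{1/2}$ and $e$) needed for $e^{1/2}/\sqrt n\le e-1$ is left over. One could instead first prove $|h_{n}(x)|\le M^{n}c_{n}$ with $c_{0}=c_{1}=1$ and $c_{n}=(c_{n-1}+c_{n-2})/n$ — here $c_{n}=|h_{n}(i)|$ — and then bound $c_{n}\le e^{n}n^{-n/2}$ separately, but merging the two steps as above is shorter.
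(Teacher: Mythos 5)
Your proof is correct: the induction via the recursion \eqref{m28}, with $M=\max(1,|x|)$, the elementary bounds $\bigl(1+\tfrac{1}{n-1}\bigr)^{n-1}\le e$ and $\bigl(1+\tfrac{2}{n-2}\bigr)^{n-2}\le e^{2}$, and the closing condition $e^{1/2}n^{-1/2}+1\le e$ (valid for all $n\ge 1$, hence for the inductive range $n\ge 3$) all check out, and the base cases $n=0,1,2$ are handled properly. Note that the paper itself offers no proof of this lemma — it is imported verbatim from \cite{pin} — so your argument is a correct, self-contained reconstruction of the standard recursion-based induction used there.
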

In what follows, similar to the case of matrices, we use relations (\ref{m28}) and (\ref{ms28}) to write
\begin{eqnarray}\label{m30}
 \left \{
\begin{array}{ll}
V^{\prime}_{k}=\frac{V_{1}^{k}}{k!},   \quad \textit{if} & \xi =0,\\
\\
V^{\prime}_{k}=\xi^{\frac{k}{2}}h_{k}(\frac{V_{1}}{\sqrt{\xi}}),    \quad  \textit{if} & \xi \neq 0.
\end{array} \right.
\end{eqnarray}
Here, $V_{1}$ is the normalization of the entries of tensor $A$. The following lemma will be used in the proofs of lemmas that will appear later.
\begin{lemma}\label{m31}
If $\theta(n)=\omega(1)$, then with probability $ 1-o(1) $
\[
\vert V_{1} \vert \leq \theta.
\]
\end{lemma}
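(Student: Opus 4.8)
The plan is to control $V_1$ by a second-moment / Chebyshev argument, exactly as in the matrix case. Recall that
\[
V_1 = D_1 = \frac{1}{n^{(d-1)/2}} \sum_{j=1}^{n} C_j = \frac{1}{n^{d-1}} \sum_{i_1,i_2,\dots,i_d=1}^{n} a_{i_1 i_2 \dots i_d},
\]
since $C_j = n^{-(d-1)/2} \sum_{i_1,i_3,\dots,i_d} a_{i_1 j i_3 \dots i_d}$ sums over the type-$2$ hyperplane at $j$. Thus $V_1$ is just the normalized sum of all $n^d$ i.i.d.\ entries of $A$, each of which has mean $0$ and variance $1$ (by the definition of $\mathcal{D}_0$).

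First I would compute the first two moments. Since $E[a_{i_1\dots i_d}] = 0$, linearity gives $E[V_1] = 0$. For the variance, independence and zero mean kill all cross terms, leaving
\[
E[|V_1|^2] = \frac{1}{n^{2(d-1)}} \sum_{i_1,\dots,i_d} E[|a_{i_1\dots i_d}|^2] = \frac{n^d}{n^{2(d-1)}} = n^{2-d} \leq 1,
\]
where the last inequality uses $d \geq 3$. (In fact for $d \geq 3$ this is $o(1)$, which is even stronger than what we need.)

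Then I would apply Chebyshev's inequality: for any threshold $\theta > 0$,
\[
P\big(|V_1| > \theta\big) \leq \frac{E[|V_1|^2]}{\theta^2} \leq \frac{n^{2-d}}{\theta^2} \leq \frac{1}{\theta^2}.
\]
Under the hypothesis $\theta = \theta(n) = \omega(1)$ — recall from \eqref{aa} that the algorithm uses $\theta(n) = \ln\ln n$, which indeed tends to infinity — the right-hand side is $\theta^{-2} = o(1)$, so $P(|V_1| \leq \theta) \geq 1 - o(1)$, as claimed. (Using the sharper bound $n^{2-d}/\theta^2$ one even gets $o(1)$ for $\theta$ a fixed constant once $d \geq 3$, but the statement only asks for the $\omega(1)$ regime.)

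I do not expect any real obstacle here: the only mild subtlety is recognizing that $V_1 = D_1$ unrolls to the normalized sum of all $n^d$ entries (so that the variance is $n^d / n^{2(d-1)}$ rather than something involving the downward factorials that appear elsewhere), and then checking the exponent $2-d \leq 0$, i.e.\ that $d \geq 3$ is exactly what makes the bound trivial. Everything else is a one-line Chebyshev estimate.
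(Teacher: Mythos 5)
Your proposal is correct and follows essentially the same route as the paper, which simply defers to the matrix-case proof in the cited reference (a mean-zero, second-moment Chebyshev bound on $V_{1}$); you have merely written that argument out explicitly for the tensor normalization, correctly identifying $V_{1}=\frac{1}{n^{d-1}}\sum_{i_{1},\dots,i_{d}}a_{i_{1}\dots i_{d}}$ with $E[V_{1}]=0$ and $E[\vert V_{1}\vert^{2}]=n^{2-d}\leq 1$. The observation that for $d\geq 3$ the variance is in fact $o(1)$, making the bound even easier than in the matrix case, is a nice extra but does not change the approach.
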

\begin{proof}
The proof of this lemma is similar to that of its matrix case, except that here $V_{1}$ is the normalization of the entries of tensor $A$. Thus, we  refer the reader to the proof of lemma $6.2$ in \cite{pin} for the details.
\end{proof}
\begin{lemma}\label{m311}
If $k \in \mathbb{N}$ is arbitrary, then
\[
\vert V^{\prime}_{k} \vert \leq \max (1, \vert V_{1} \vert^{k}) \bigg(\frac{k}{e^{2}}\bigg)^{\frac{-k}{2}}.
\]
\end{lemma}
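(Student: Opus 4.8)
The plan is to split into the two cases $\xi=0$ and $\xi\neq0$ according to relation (\ref{m30}), and in each case reduce the desired inequality either to Lemma \ref{m29} or to an elementary factorial estimate, invoking $\vert\xi\vert\leq1$ from (\ref{m1}) to absorb the leftover power of $\xi$.

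First I would treat the case $\xi\neq0$. By (\ref{m30}) we have $V^{\prime}_{k}=\xi^{k/2}h_{k}(V_{1}/\sqrt{\xi})$, so taking absolute values and applying Lemma \ref{m29} with $x=V_{1}/\sqrt{\xi}$ gives $\vert V^{\prime}_{k}\vert\leq\vert\xi\vert^{k/2}\max(1,\vert V_{1}\vert/\sqrt{\vert\xi\vert})^{k}(k/e^{2})^{-k/2}$. I would then observe that $\vert\xi\vert^{k/2}\max(1,\vert V_{1}\vert/\sqrt{\vert\xi\vert})^{k}=\max(\vert\xi\vert^{k/2},\vert V_{1}\vert^{k})$, and since $\vert\xi\vert\leq1$ by (\ref{m1}) this is at most $\max(1,\vert V_{1}\vert^{k})$, which yields the claim in this case.

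For the case $\xi=0$ we have $V^{\prime}_{k}=V_{1}^{k}/k!$ from (\ref{m30}), and Lemma \ref{m29} does not apply directly. Here I would instead use the elementary bound $k!\geq(k/e)^{k}$ (which follows from $e^{k}\geq k^{k}/k!$), together with $k^{k}\geq k^{k/2}$ for $k\geq1$, to obtain $k!\geq(k/e^{2})^{k/2}$, i.e. $1/k!\leq(k/e^{2})^{-k/2}$. Combining this with $\vert V_{1}\vert^{k}\leq\max(1,\vert V_{1}\vert^{k})$ gives $\vert V^{\prime}_{k}\vert=\vert V_{1}\vert^{k}/k!\leq\max(1,\vert V_{1}\vert^{k})(k/e^{2})^{-k/2}$, and the case $k=0$ is immediate since $V^{\prime}_{0}=1$.

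The computation is essentially routine; the only point requiring a little care is that Lemma \ref{m29} governs only the $\xi\neq0$ branch of (\ref{m30}), so the $\xi=0$ branch must be handled separately via the factorial estimate, and in the $\xi\neq0$ branch one must use $\vert\xi\vert\leq1$ to replace $\max(\vert\xi\vert^{k/2},\vert V_{1}\vert^{k})$ by $\max(1,\vert V_{1}\vert^{k})$.
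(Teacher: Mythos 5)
Your proposal is correct and follows essentially the same route the paper intends: the paper's proof simply invokes relation (\ref{m30}), the bound $\vert\xi\vert\leq 1$ from (\ref{m1}), and Lemma \ref{m29}, deferring the details to the matrix case in \cite{pin}, and your case split on $\xi=0$ versus $\xi\neq 0$ with the factorial estimate in the first case and the Hermite bound plus $\max(\vert\xi\vert^{k/2},\vert V_{1}\vert^{k})\leq\max(1,\vert V_{1}\vert^{k})$ in the second is exactly that argument carried out explicitly.
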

\begin{proof}
The proof of this lemma can be completed using relations (\ref{m30}) and (\ref{m1}) and Lemma \ref{m29}. The proof is similar to that of its matrix case, except that here $V_{1}$ is the normalization of the entries of tensor $A$, and $V^{\prime}_{k}$ is an asymptotic approximation of $V_{k}$ associated to $A$. Therefore, we refer the reader to the proof of lemma $6.3$ in \cite{pin} for the details.
\end{proof}
The following lemma presents a result for tensors, which is similar to a well-known statement from matrix theory.
\begin{lemma}\label{mm00}
Suppose that $\theta:=o(\ln^{\frac{1}{4}}n)$ is a function of $n$ such that $\theta \geq 1$ and $\vert V_{1} \vert \leq \theta$. For any constant $\tau > 0$ and for sufficiently large $n$,
\[
\vert V^{\prime}_{k} \vert \leq n^{\tau} k^{\frac{-k}{4}}.
\]
Also, it is true that
\[
\vert V^{\prime}_{k} \vert \leq e^{2\theta^{2}}.
\]
\end{lemma}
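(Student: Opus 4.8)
The plan is to derive both bounds directly from Lemma~\ref{m311}, which gives $\vert V^{\prime}_{k}\vert \le \max(1,\vert V_{1}\vert^{k})\,(k/e^{2})^{-k/2}$, together with the hypotheses $\theta\ge 1$, $\vert V_{1}\vert\le\theta$, and $\theta = o(\ln^{1/4}n)$. Since $\theta\ge 1$ we have $\max(1,\vert V_{1}\vert^{k}) = \max(1,\vert V_{1}\vert)^{k}\le\theta^{k}$, so the starting point for both estimates is
\[
\vert V^{\prime}_{k}\vert \le \theta^{k}\Big(\frac{k}{e^{2}}\Big)^{-k/2} = \theta^{k} e^{k} k^{-k/2}.
\]

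For the first bound I would split according to whether $k$ is ``small'' or ``large'' relative to $n$. Write $\theta^{k}e^{k}k^{-k/2} = k^{-k/4}\cdot\big(\theta e\, k^{-1/4}\big)^{k}$. When $k\ge k_{0}$ for a suitable threshold, the factor $\theta e\,k^{-1/4}$ is at most $1$ because $\theta = o(\ln^{1/4}n)$ forces $\theta e \le k^{1/4}$ once $k\gtrsim\theta^{4}e^{4} = o(\ln n)$; hence for such $k$ we already have $\vert V^{\prime}_{k}\vert\le k^{-k/4}\le n^{\tau}k^{-k/4}$ trivially. For the remaining range $k < k_{0} = o(\ln n)$, the quantity $\big(\theta e\,k^{-1/4}\big)^{k}$ is bounded: taking logarithms, $k\ln(\theta e\,k^{-1/4}) \le k\ln(\theta e) \le k_{0}\ln(\theta e) = o(\ln n)\cdot O(\ln\ln n) = o(\ln n)$, so this factor is $n^{o(1)}\le n^{\tau}$ for $n$ large. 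Combining the two ranges gives $\vert V^{\prime}_{k}\vert\le n^{\tau}k^{-k/4}$ for all $k$ and all sufficiently large $n$. (For $k=0$ the claim reads $\vert V^{\prime}_{0}\vert = 1\le n^{\tau}$, which is immediate.)

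For the second bound, $\vert V^{\prime}_{k}\vert\le e^{2\theta^{2}}$, I would maximize $f(k):=\theta^{k}e^{k}k^{-k/2}$ over $k\ge 1$ by a standard calculus argument on $\ln f(k) = k(\ln\theta + 1 - \tfrac12\ln k)$. Differentiating in $k$ (treated as a continuous variable) gives $\tfrac{d}{dk}\ln f(k) = \ln\theta + \tfrac12 - \tfrac12\ln k$, which vanishes at $k^{\ast} = \theta^{2}e$, and there $\ln f(k^{\ast}) = k^{\ast}(\ln\theta + 1 - \tfrac12(2\ln\theta + 1)) = k^{\ast}\cdot\tfrac12 = \tfrac12\theta^{2}e \le 2\theta^{2}$ since $e/2 < 2$. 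Hence $f(k)\le e^{2\theta^{2}}$ for every $k\ge 1$, and the case $k=0$ gives $1\le e^{2\theta^{2}}$, completing the proof.

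The only real obstacle is bookkeeping: one must be careful that the threshold $k_{0}$ in the first part genuinely satisfies $k_{0} = o(\ln n)$ while still covering all $k$ below the point where $\theta e\,k^{-1/4}\le 1$, and that the bound $\ln(\theta e) = O(\ln\ln n)$ (valid since $\theta = o(\ln^{1/4}n)$) is used correctly so that the product $k_{0}\ln(\theta e)$ is $o(\ln n)$ and therefore absorbed into $n^{\tau}$. Everything else is elementary manipulation of the estimate supplied by Lemma~\ref{m311}.
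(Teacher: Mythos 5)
Your overall strategy is the right one, and it is essentially what the paper does (the paper itself only defers to the proof of Lemma 6.4 in \cite{pin}, which likewise deduces both bounds from the estimate of Lemma \ref{m311} plus $\vert V_{1}\vert\le\theta$). Your treatment of the second bound is correct: maximizing $\ln f(k)=k(\ln\theta+1-\tfrac12\ln k)$ at $k^{*}=e\theta^{2}$ gives $f(k)\le e^{e\theta^{2}/2}\le e^{2\theta^{2}}$.

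The first bound, however, contains a genuine flawed step, and it is exactly the point you yourself flagged as ``bookkeeping''. For $k<k_{0}\asymp(\theta e)^{4}$ you discard the factor $k^{-k/4}$ and bound $(\theta e\,k^{-1/4})^{k}\le(\theta e)^{k_{0}}$, then argue $k_{0}\ln(\theta e)=o(\ln n)\cdot O(\ln\ln n)=o(\ln n)$. That inference is false in general: a product of an $o(\ln n)$ quantity with an $O(\ln\ln n)$ quantity need not be $o(\ln n)$, and the hypothesis $\theta=o(\ln^{1/4}n)$ actually admits cases where your chain of inequalities breaks. For instance, take $\theta=(\ln n)^{1/4}/(\ln\ln n)^{1/8}$; then $\theta=o(\ln^{1/4}n)$, but $k_{0}\ln(\theta e)\sim e^{4}\theta^{4}\ln\theta\sim\tfrac{e^{4}}{4}\,\ln n\,\sqrt{\ln\ln n}$, so $(\theta e)^{k_{0}}$ grows faster than any fixed power $n^{\tau}$ and your bound for the range $k<k_{0}$ does not deliver the claim, even though the lemma itself is still true there. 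The repair is to not throw away $k^{-k/4}$: bound the correction factor by its maximum over all $k\ge1$, i.e.\ maximize $\ln\bigl((\theta e)^{k}k^{-k/4}\bigr)=k\ln(\theta e)-\tfrac{k}{4}\ln k$, which is attained at $k^{*}=e^{3}\theta^{4}$ with value $k^{*}/4=e^{3}\theta^{4}/4$. Since $\theta^{4}=o(\ln n)$, this maximum is $e^{o(\ln n)}=n^{o(1)}\le n^{\tau}$ for $n$ large, uniformly in $k$, which gives $\vert V^{\prime}_{k}\vert\le n^{\tau}k^{-k/4}$ for all $k$ — the same calculus you already used for the second bound, so the fix costs you nothing new.
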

\begin{proof}
The proof of this lemma can be completed by Lemma \ref{m311}. The proof is similar to that of its matrix case, except that  $V^{\prime}_{k}$ is an asymptotic approximation of $V_{k}$ associated to $A$. Thus, we refer the reader to the proof of lemma $6.4$ in \cite{pin} for the details.
\end{proof}
In below, we present two well-known expansion formulas as a generating function.
\begin{lemma}[ \cite{boy}, \cite{beta}]
For any $ z,t \in \mathbb{C}$,
\begin{eqnarray*}\label{m300}
 \left \{
\begin{array}{ll}
\sum\limits_{k=0}^{\infty} \frac{z^{k}}{k!}=e^{z},\\
\\
\sum\limits_{k=0}^{\infty} \frac{H_{e_{k}}(z) t^{k}}{k!}=e^{zt-\frac{t^{2}}{2}}.
\end{array} \right.
\end{eqnarray*}
\end{lemma}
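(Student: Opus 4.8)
The first identity requires no real argument: the Maclaurin series $\sum_{k\ge 0}z^{k}/k!$ has infinite radius of convergence and therefore represents the entire function $e^{z}$ at every $z\in\mathbb{C}$. So the plan is to put all the work into the Hermite generating function on the second line.

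For that identity I would start from the Rodrigues-type formula $H_{e_{k}}(x)=(-1)^{k}e^{x^{2}/2}\frac{d^{k}}{dx^{k}}e^{-x^{2}/2}$ defining the probabilists' Hermite polynomials. Fix $z\in\mathbb{C}$ and view $\varphi(t):=e^{-(z-t)^{2}/2}$ as an entire function of the single complex variable $t$. Substituting $u=z-t$ and using $\tfrac{d}{dt}=-\tfrac{d}{du}$ gives $\varphi^{(k)}(0)=(-1)^{k}\big(\tfrac{d^{k}}{du^{k}}e^{-u^{2}/2}\big)\big|_{u=z}=(-1)^{k}\cdot(-1)^{k}e^{-z^{2}/2}H_{e_{k}}(z)=e^{-z^{2}/2}H_{e_{k}}(z)$. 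Hence the Taylor expansion of $\varphi$ about $t=0$ reads $e^{-(z-t)^{2}/2}=\sum_{k=0}^{\infty}\frac{H_{e_{k}}(z)}{k!}\,e^{-z^{2}/2}\,t^{k}$; multiplying through by $e^{z^{2}/2}$ and simplifying $\tfrac{z^{2}}{2}-\tfrac{(z-t)^{2}}{2}=zt-\tfrac{t^{2}}{2}$ yields precisely $\sum_{k=0}^{\infty}\frac{H_{e_{k}}(z)t^{k}}{k!}=e^{zt-t^{2}/2}$. Replacing $z,t$ by the names in the statement finishes it.

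The only delicate point is justifying that $\varphi$ is genuinely reproduced by this series with those coefficients, i.e. that term-by-term rearrangement and differentiation are legitimate. Since $t\mapsto e^{-(z-t)^{2}/2}$ is entire, its Maclaurin series converges to it on all of $\mathbb{C}$, so in fact nothing more is needed; but if one prefers to work with the series $F(z,t):=\sum_{k}H_{e_{k}}(z)t^{k}/k!=\sum_{k}h_{k}(z)t^{k}$ directly, Lemma~\ref{m29} provides the convergent majorant $|h_{k}(z)t^{k}|\le\max(1,|z|)^{k}(k/e^{2})^{-k/2}|t|^{k}$, which makes $F$ locally uniformly convergent and differentiable in $t$. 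Granting this, an alternative and arguably cleaner ending is to feed the recurrence \eqref{m28} into $F$: a one-line computation gives $\partial_{t}F(z,t)=(z-t)F(z,t)$ with $F(z,0)=1$, and the unique solution of this linear first-order ODE is $e^{zt-t^{2}/2}$. Either route is entirely routine, and I anticipate no genuine obstacle beyond the minor bookkeeping of convergence.
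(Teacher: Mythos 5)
Your proof is correct, but note that the paper itself offers no argument for this lemma: it is quoted as a classical fact from Roman's \emph{Umbral Calculus} and the Bateman manuscript, so there is no in-paper proof to match. Your route is the standard one: the first line is just the everywhere-convergent Maclaurin series of $e^{z}$, and for the second you use the Rodrigues formula to identify the Taylor coefficients of the entire function $t\mapsto e^{-(z-t)^{2}/2}$ at $t=0$ as $e^{-z^{2}/2}H_{e_{k}}(z)/k!$, then multiply by $e^{z^{2}/2}$ and simplify the exponent to $zt-\tfrac{t^{2}}{2}$; since the function is entire in $t$ for each fixed $z\in\mathbb{C}$, the series converges to it on all of $\mathbb{C}$ and no further justification is needed (the Rodrigues definition itself makes sense at complex argument because $e^{-x^{2}/2}$ is entire and $H_{e_{k}}$ is a polynomial). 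Your alternative ending, feeding the recurrence \eqref{m28} into $F(z,t)=\sum_{k}h_{k}(z)t^{k}$ to get $\partial_{t}F=(z-t)F$, $F(z,0)=1$, with the majorant from Lemma \ref{m29} guaranteeing locally uniform convergence and termwise differentiation, is equally valid and has the small advantage of using only objects already set up in the paper rather than the Rodrigues form. Either way the argument is complete and consistent with how the lemma is used in \eqref{m33}.
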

Using relation \ref{m300}, similar to the case of matrices, we can write 
\begin{eqnarray}\label{m33}
\sum\limits_{k=0}^{\infty} V^{\prime}_{k} z^{k}= e^{V_{1}z-\frac{\xi z^{2}}{2}},
\end{eqnarray}
where $V_{1}$ is the normalization of the entries of tensor $A$, and $\xi$ is the quasi-variance of $\mathcal{D}$. 

 In the following lemma, we prove that we can consider $\sum\limits_{k=0}^{\infty} V^{\prime}_{k} z^{k}$ as $\sum\limits_{k=0}^{t} V^{\prime}_{k} z^{k}$ with a small additive error.
\begin{lemma}\label{17}
Considering relations (\ref{aa}),
\[
P\bigg[\bigg \vert \sum\limits_{k=t+1}^{\infty} V^{\prime}_{k} z^{k} \bigg \vert = n^{-\omega(1)}\bigg]=1-o(1).
\]
\end{lemma}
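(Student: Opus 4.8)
The plan is to condition on the high-probability event that $\vert V_1\vert$ is small and then to bound the tail of the series crudely, using the explicit decay estimate for $\vert V'_k\vert$ furnished by Lemma \ref{mm00}. Note that on any such event the series $\sum_k V'_k z^k$ converges — indeed it equals the entire function $e^{V_1 z-\xi z^2/2}$ by \eqref{m33} — so the tail is well defined.

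First I would apply Lemma \ref{m31} with $\theta=\theta(n)=\ln\ln n$, which is $\omega(1)$ as the lemma requires; this gives, with probability $1-o(1)$, the bound $\vert V_1\vert\le\theta$. From now on I work on this event. Since $\theta=\ln\ln n=o(\ln^{1/4}n)$ and $\theta\ge 1$ for all large $n$, Lemma \ref{mm00} applies and yields, for any fixed constant $\tau>0$ (say $\tau=1$) and all sufficiently large $n$, the bound $\vert V'_k\vert\le n^{\tau}k^{-k/4}$ for every $k\ge 1$.

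Next I would estimate the tail term by term. By \eqref{aa} we have $\vert z\vert\le\big(\tfrac{\ln n}{(d-1)^6}\big)^{c}\le(\ln n)^{c}$, and in the tail $k\ge t+1>\ln n$, so $k^{-1/4}\vert z\vert\le(\ln n)^{c-1/4}=:r$; since $c<\tfrac18<\tfrac14$ we have $0<r<1$ and $r=o(1)$. Hence
\[
\Bigg\vert\sum_{k=t+1}^{\infty}V'_k z^{k}\Bigg\vert
\le n^{\tau}\sum_{k=t+1}^{\infty}\big(k^{-1/4}\vert z\vert\big)^{k}
\le n^{\tau}\sum_{k=t+1}^{\infty}r^{k}
=\frac{n^{\tau}\,r^{t+1}}{1-r}
\le\frac{n^{\tau}}{1-r}\,n^{-(1/4-c)\ln\ln n},
\]
where the last step uses $r^{t+1}\le r^{\ln n}=(\ln n)^{(c-1/4)\ln n}=n^{-(1/4-c)\ln\ln n}$. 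Because $\tfrac14-c>0$ is a constant and $\ln\ln n\to\infty$, the quantity $(1/4-c)\ln\ln n-\tau$ tends to infinity, so this bound is $n^{-\omega(1)}$; the polynomial factor $n^{\tau}$ and the bounded factor $\tfrac{1}{1-r}\to1$ are absorbed. Combining this deterministic estimate with $P(\vert V_1\vert\le\theta)=1-o(1)$ yields the claim.

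The hard part — really the only delicate point — will be ensuring that the decay $\vert V'_k\vert\le n^{\tau}k^{-k/4}$ is quantitatively strong enough to beat the polylogarithmic growth of $\vert z\vert$ over the range $k>\ln n$; this is precisely what forces $c$ to be taken strictly below $\tfrac18$ (in particular below $\tfrac14$) in \eqref{aa}, since that margin is exactly what makes the ratio $r=(\ln n)^{c-1/4}$ strictly less than $1$, and the slowly growing but unbounded factor $\ln\ln n$ hidden in $t\ge\ln n$ is then what promotes ``polynomially small'' to ``$n^{-\omega(1)}$''. Everything else is a routine chain of inequalities, provided one keeps $\tau$ a fixed constant so that $n^{\tau}$ cannot cancel the super-polynomial decay coming from $r^{t+1}$.
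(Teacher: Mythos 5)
Your proposal is correct and follows essentially the same route as the paper: condition on $\vert V_{1}\vert\le\theta=\ln\ln n$ via Lemma \ref{m31}, invoke the bound $\vert V^{\prime}_{k}\vert\le n^{\tau}k^{-k/4}$ from Lemma \ref{mm00}, and control the tail by a geometric-series estimate. The only difference is cosmetic: you bound each term directly by $r^{k}$ with $r=(\ln n)^{c-1/4}$, whereas the paper bounds the ratio of consecutive terms by roughly $\vert z\vert/t^{1/4}$ and sums the resulting geometric series; both yield the same $n^{-\omega(1)}$ conclusion.
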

\begin{proof}
Let $\theta(n)=\ln \ln n$. Since $\theta(n)=\omega(1)$, by Lemma \ref{m31} with high probability, $ \vert V_{1} \vert \leq \theta$. Now, by Lemma \ref{mm00},
\begin{eqnarray} \label{m38}
\bigg \vert \sum\limits_{k=t+1}^{\infty} V^{\prime}_{k} z^{k} \bigg \vert\leq n^{\tau} \sum\limits_{k=t+1}^{\infty} k^{\frac{-k}{4}} \vert z \vert^{k}.
\end{eqnarray}
Since $\vert z \vert^{8} \leq \frac{\ln n}{(d-1)^{6}}$ and $t=\ln n+\ln (\frac{1}{\varepsilon})$, for large $n$ and $d$,
\begin{eqnarray}\label{m39}
\nonumber \frac{(k+1)^{\frac{-(k+1)}{4} }\vert z \vert^{k+1}}{(k)^{\frac{-k}{4}} \vert z \vert^{k}}&=& \frac{\vert z \vert}{(k+1)^{\frac{1}{4}}}\bigg(1+\frac{1}{k}\bigg)^{\frac{-k}{4}}\\
\nonumber &\leq & \frac{\vert z \vert}{t^{\frac{1}{4}}}\\
\nonumber &\leq & \dfrac{(\ln n)^{\frac{1}{8}}}{(d-1)^{\frac{6}{8}} (\ln (\frac{n}{\varepsilon}))^{\frac{1}{4}}}\\
&\leq & \frac{1}{2(d-1)^{\frac{6}{8}}}.
\end{eqnarray}
Now, using relations (\ref{m38}) and (\ref{m39}) we obtain
\begin{eqnarray} 
\nonumber \bigg \vert \sum\limits_{k=t+1}^{\infty} V^{\prime}_{k} z^{k} \bigg \vert &\leq & n^{\tau}  \bigg( (t+1)^{\frac{-(t+1)}{4}} \vert z \vert^{t+1}+ (t+2)^{\frac{-(t+2)}{4}} \vert z \vert^{t+2}+ \dots \bigg)\\
\nonumber &\leq & n^{\tau}  \bigg( \frac{1}{2(d-1)^{\frac{6}{8}}}. t^{\frac{-t}{4}}+\frac{1}{4(d-1)^{\frac{12}{8}}}. t^{\frac{-t}{4}}+\dots \bigg)\vert z \vert^{t}\\
\nonumber &\leq & n^{\tau} t^{\frac{-t}{4}}\vert z\vert^{t} \frac{1}{2(d-1)^{\frac{6}{8}}-1}\\
\nonumber &=& n^{\tau} \frac{1}{\ln (\frac{n}{\epsilon})^{\frac{1}{4} \ln (\frac{n}{\varepsilon})}}. 
(\frac{\ln n}{(d-1)^{6}})^{\frac{1}{8}\ln \frac{n}{\varepsilon}}
.\frac{1}{2(d-1)^{\frac{6}{8}}-1},
\end{eqnarray}
and for large $n$ and $d$, this is very small, namely, $n^{-\omega(1)}$. 
\end{proof}
In what follows, we prove that $\bigg\vert e^{V_{1}z-\frac{\xi z^{2}}{2}} \bigg\vert \geq n^{-\gamma}$ with high probability. 
\begin{lemma}\label{18}
Considering relations (\ref{aa}), with probability $ 1-o(1) $
\[
 \bigg\vert e^{V_{1}z-\frac{\xi z^{2}}{2}} \bigg\vert \geq n^{-\gamma} 
\]
\end{lemma}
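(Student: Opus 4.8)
The plan is to work with the modulus directly. For any $w\in\mathbb{C}$ one has $|e^{w}|=e^{\mathrm{Re}(w)}$, so it suffices to show that, with probability $1-o(1)$,
\[
\mathrm{Re}\Big(V_{1}z-\tfrac{\xi z^{2}}{2}\Big)\ \geq\ -\gamma\ln n,
\]
since then $\big|e^{V_{1}z-\xi z^{2}/2}\big|=e^{\mathrm{Re}(V_{1}z-\xi z^{2}/2)}\geq e^{-\gamma\ln n}=n^{-\gamma}$, where $\gamma>0$ is the fixed constant from (\ref{aa}).

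First I would bound the real part crudely by the modulus and use $|\xi|\leq 1$ from (\ref{m1}):
\[
\Big|\mathrm{Re}\Big(V_{1}z-\tfrac{\xi z^{2}}{2}\Big)\Big|\ \leq\ |V_{1}|\,|z|+\tfrac12|\xi|\,|z|^{2}\ \leq\ |V_{1}|\,|z|+\tfrac12|z|^{2}.
\]
Then I would control each term using (\ref{aa}). Since $|z|\leq\big(\tfrac{\ln n}{(d-1)^{6}}\big)^{c}\leq(\ln n)^{c}$ and $c<\tfrac18$, we get $|z|^{2}\leq(\ln n)^{2c}=o(\ln n)$. For the first term, apply Lemma \ref{m31} with the choice $\theta(n)=\ln\ln n$ (which is $\omega(1)$): with probability $1-o(1)$ we have $|V_{1}|\leq\ln\ln n$, and hence $|V_{1}|\,|z|\leq(\ln\ln n)(\ln n)^{c}=o(\ln n)$ as well, because $c<1$.

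Combining, on the high-probability event $\{|V_{1}|\leq\ln\ln n\}$ we obtain $\big|\mathrm{Re}(V_{1}z-\xi z^{2}/2)\big|=o(\ln n)$, so for all sufficiently large $n$ this quantity is $\geq-\gamma\ln n$, which yields the claim; the failure probability is $o(1)$ since it is bounded by the failure probability in Lemma \ref{m31}. The argument is essentially routine; the only point requiring a little care is checking that the polyloglog-size bound on $|V_{1}|$ from Lemma \ref{m31} together with the polylog-size bound on $|z|$ multiply to something of order strictly smaller than $\ln n$ — this is where $c<\tfrac18$ (even $c<1$ would do here) enters, while the hypothesis $d\in\mathcal{O}(\mathrm{polylog}(n))$ only helps, through the harmless factor $(d-1)^{-6c}\leq 1$ that keeps $|z|$ no larger than $(\ln n)^{c}$.
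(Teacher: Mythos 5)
Your proof is correct and follows essentially the same route as the paper: reduce to lower-bounding $\mathrm{Re}\big(V_{1}z-\tfrac{\xi z^{2}}{2}\big)$, use $|\xi|\leq 1$ together with the bound on $|z|$ from (\ref{aa}) for the quadratic term, and invoke Lemma \ref{m31} to control $|V_{1}|$ with probability $1-o(1)$. The only cosmetic difference is that you apply Lemma \ref{m31} with $\theta=\ln\ln n$ and absorb both terms into an $o(\ln n)$ estimate, whereas the paper applies it with the threshold $\frac{\gamma\ln n}{2(d-1)^{12/8}|z|}$; both arguments yield the same conclusion.
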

\begin{proof}
First,
\begin{eqnarray}
\nonumber P\bigg[ \bigg\vert e^{V_{1}z-\frac{\xi z^{2}}{2}} \bigg\vert < n^{-\gamma} \bigg] &=& P\bigg [e^{Re(V_{1}z-\frac{\xi z^{2}}{2})} < n^{-\gamma} \bigg]\\
\nonumber &=&P \bigg[ Re \bigg(V_{1}z-\frac{\xi z^{2}}{2}\bigg)< -\gamma \ln n\bigg]\\
\nonumber & =& P\bigg [ Re \bigg(V_{1}\frac{z}{\vert z \vert}\bigg) <\frac{-\gamma \ln n}{\vert z \vert}+\frac{Re(\xi z^{2})}{2 \vert z \vert} \bigg].
\end{eqnarray}   
Since $\vert z \vert^{8}\leq \frac{\ln n}{(d-1)^{6}} $ and $\gamma <\frac{1}{2}<1$, 
\[
\frac{Re(\xi z^{2})}{2\vert z \vert}\leq \bigg \vert \frac{\xi z^{2}}{2  z } \bigg \vert\leq\frac{\gamma  \ln n}{2(d-1)^{\frac{12}{8}}\vert z \vert}.
\]
Also, for large $n$ and $d$ we obtain $ \frac{\gamma \ln n }{2(d-1)^{\frac{12}{8}}\vert z \vert} =\omega(1)$, and we can use Lemma \ref{m31}. Thus, 
\[
P\bigg[ \bigg\vert e^{V_{1}z-\frac{\xi z^{2}}{2}} \bigg\vert < n^{-\gamma} \bigg] \leq P\bigg[ \vert V_{1} \vert > \frac{\gamma \ln n}{2 (d-1)^{\frac{12}{8}} \vert z \vert}\bigg ]=o(1).
\]
Notice that here, we need $ n=exp[\Omega(\gamma^{\frac{-1}{1-c}})] $.
\end{proof}

In the following lemma, we find an upper bound for $\vert V^{\prime}_{k}-V_{k} \vert$. 
\begin{lemma}\label{41}
Let $\theta(n)= \ln \ln n$, and $\nu< \frac{1}{8}$ be a positive constant. Then, there exists a constant $ n_{k} $ such that with high probability, 
\[
\varepsilon_{k}:=\vert V^{\prime}_{k}-V_{k} \vert \leq n^{-\nu}k^{-\nu k}, \quad \forall n> n_{k},
\]
for all $k$, $ 0\leq k \leq n$.
\end{lemma}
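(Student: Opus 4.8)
The plan is to prove the bound $\varepsilon_k := |V'_k - V_k| \le n^{-\nu}k^{-\nu k}$ by strong induction on $k$, mirroring the matrix argument in \cite{pin} but carrying the extra order-$d$ factors through every estimate. The base cases $k=0$ and $k=1$ are trivial, since $V'_0 = V_0 = 1$ and $V'_1 = V_1$ by definition, so $\varepsilon_0 = \varepsilon_1 = 0$. For the inductive step I would start from the two recursions
\[
V_k = \frac{V_{k-1}V_1 - V_{k-2}D_2 + \sum_{i=2}^{k-1}(-1)^i V_{k-1}D_{i+1}}{k}, \qquad
V'_k = \frac{V'_{k-1}V'_1 - V'_{k-2}\xi}{k},
\]
subtract them, and split the difference into several groups of terms: a "propagation" part coming from $(V'_{k-1}-V_{k-1})V_1$ and $(V'_{k-2}-V_{k-2})D_2$, a "$D_2$ versus $\xi$" part of the form $V'_{k-2}(\xi - D_2)$, and a "tail" part $\sum_{i=2}^{k-1}(-1)^i V_{k-1}D_{i+1}$ that has no counterpart in the $V'$ recursion and must be shown to be negligible on its own.

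The key ingredients to feed in are exactly the high-probability events established earlier: Lemma \ref{m22} gives $|D_2 - \xi| \le n^{-\phi}$ for any $0<\phi<\frac{2d-3}{2}$; Lemma \ref{01} gives $|D_k| \le n^{-\triangle k}$ for all $k \ge 3$ simultaneously, for any $\triangle < \frac{3d-5}{6}$; Lemma \ref{m31} with $\theta = \ln\ln n$ gives $|V_1| \le \theta$; and Lemma \ref{mm00} gives $|V'_k| \le e^{2\theta^2}$ and $|V'_k| \le n^\tau k^{-k/4}$. Intersecting these events costs only $o(1)$ in probability. On this event I would also need a crude a priori bound on $|V_k|$ itself — something like $|V_k| \le |V'_k| + \varepsilon_k$ combined with the inductive hypothesis, or a direct bound via $|V_k| \le |V'_k| + (\text{earlier }\varepsilon\text{'s})$ — to control the tail term $\sum_i V_{k-1}D_{i+1}$, whose size is governed by $|V_{k-1}|$ times a geometric sum $\sum_{i\ge 2} n^{-\triangle(i+1)}$ that is dominated by its first term $n^{-3\triangle}$.

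Assembling the pieces: dividing by $k$, the propagation terms contribute roughly $\frac{1}{k}(\varepsilon_{k-1}|V_1| + \varepsilon_{k-2}|D_2|) \le \frac{1}{k}(\varepsilon_{k-1}\theta + \varepsilon_{k-2})$, and here I would use the inductive estimates $\varepsilon_{k-1} \le n^{-\nu}(k-1)^{-\nu(k-1)}$ and $\varepsilon_{k-2} \le n^{-\nu}(k-2)^{-\nu(k-2)}$ together with the elementary inequality $\frac{1}{k}(k-1)^{-\nu(k-1)} \le k^{-\nu k}$ (valid for large $k$ because $\nu < 1/8 < 1$, which makes the ratio of consecutive terms shrink fast enough to absorb the extra $\theta = \ln\ln n$ factor) to fold these back into the target form with room to spare. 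The $|\xi - D_2|$ term contributes $\frac{1}{k}|V'_{k-2}|\,n^{-\phi} \le \frac{1}{k}e^{2\theta^2}n^{-\phi}$, and since $e^{2\theta^2} = e^{2(\ln\ln n)^2} = n^{o(1)}$ while $\phi$ can be taken close to $\frac{2d-3}{2} \ge \frac{3}{2}$, this is $\le \frac{1}{2}n^{-\nu}k^{-\nu k}$ once $n > n_k$; the tail term is handled the same way using $|D_{i+1}| \le n^{-3\triangle}$ with $\triangle$ chosen so that $3\triangle$ comfortably exceeds $\nu$. Choosing the slack constant $n_k$ large enough (depending on $k$, as the statement allows) absorbs all the $n^{o(1)}$ factors, and the induction closes.

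The main obstacle, as in the matrix case, is the bookkeeping of the tail sum $\sum_{i=2}^{k-1}(-1)^i V_{k-1}D_{i+1}$: it requires a usable bound on $|V_{k-1}|$ (not just on $|V'_{k-1}|$), and threading the order-$d$ dependence through so that the exponents $\phi$, $\triangle$, $\nu$ can all be chosen consistently — recall $d \ge 3$ and $d \in \mathcal{O}(\mathrm{polylog}(n))$, so quantities like $\frac{2d-3}{2}$ and $\frac{3d-5}{6}$ are bounded below by absolute constants while $e^{2\theta^2}$ stays $n^{o(1)}$ — is the delicate point. Everything else is a routine propagation-of-error induction.
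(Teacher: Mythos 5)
Your overall plan — induction on $k$ via the two recursions, splitting the difference into a propagation part, a $D_2$-versus-$\xi$ part and a tail of higher $D_i$'s, and feeding in Lemmas \ref{m22}, \ref{01}, \ref{m31} and \ref{mm00} — is exactly the argument the paper relies on (the paper itself gives no details and simply defers to the matrix-case proof in \cite{pin}). The problem is in how you close the induction. For the propagation term you bound $\frac{\theta}{k}\varepsilon_{k-1}\le\frac{\theta}{k}n^{-\nu}(k-1)^{-\nu(k-1)}$ and claim the inequality $\frac1k(k-1)^{-\nu(k-1)}\le k^{-\nu k}$ absorbs the factor $\theta=\ln\ln n$. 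The ratio of your bound to the target is about $\theta e^{\nu}k^{\nu-1}$, which is below $1$ only when $k\gtrsim\theta^{1/(1-\nu)}$; for small fixed $k$ (say $k=3$) it diverges as $n\to\infty$, and enlarging $n_k$ makes $\theta$ larger, not smaller, so ``choosing the slack constant $n_k$ large enough'' does not repair this step. The induction with the bare invariant $\varepsilon_j\le n^{-\nu}j^{-\nu j}$ therefore does not close in the small-$k$ regime; one must strengthen it there, e.g. by tracking that for $k\le\theta^{O(1)}$ the true errors are of size $n^{-\phi+o(1)}$ (the accumulated $\theta$'s contribute at most $\theta^{k}/k!\le e^{\theta}=n^{o(1)}$, while $k^{-\nu k}=n^{-o(1)}$ in that range), so that the strict gaps $\phi-\nu>0$ and $3\triangle-\nu>0$ absorb everything — or by solving the linear error recursion explicitly.

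There is a symmetric gap at the other end. You bound the $|D_2-\xi|$ contribution by $\frac1k e^{2\theta^{2}}n^{-\phi}$ and assert it is at most $\frac12 n^{-\nu}k^{-\nu k}$ once $n>n_k$; this is false when $k$ grows with $n$ (and the lemma is needed simultaneously for all $k\le n$, or at least for all $k$ up to $t\to\infty$ in Lemma \ref{42}), since the left side is only polynomially small in $n$ while the target is superexponentially small in $k$. The required $k$-decay must come from the coefficients themselves: here you need the sharper bound $|V'_j|\le n^{\tau}j^{-j/4}$ of Lemma \ref{mm00}, together with $|V_j|\le|V'_j|+\varepsilon_j$ from the induction hypothesis, for the $|D_2-\xi|$ term and for the tail $\sum_i|V_{k-i-1}||D_{i+1}|$ alike; this is precisely where $\nu<\frac18<\frac14$ enters (so that $j^{-j/4}$ beats $k^{-\nu k}$ with room to spare), not in the consecutive-ratio inequality where you placed it. A minor further point: the Newton-identity recursion should have $V_{k-i-1}D_{i+1}$ in the tail, not $V_{k-1}D_{i+1}$ (the paper's display carries the same typo), so the a priori bound you flag as needed must be available for all lower indices $j$, which the two bounds above indeed supply.
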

\begin{proof}
The proof is similar to that of its matrix case, except that $V^{\prime}_{k}$ is an asymptotic approximation of $V_{k}$ associated to $A$, and also $D_{k}$ is defined in relation (\ref{aa}) associated to $A$, and in this case, $ \frac{1}{8}< \triangle < \frac{3d-5}{6}$. Thus, we refer the reader to the proof of lemma $7.1$ in \cite{pin} for the details.
\end{proof}
\begin{lemma}\label{42}
Considering relations (\ref{aa}), 
\[
P \bigg[\bigg\vert \sum\limits_{k=0}^{t} V_{k}z^{k}-\sum\limits_{k=0}^{t} V_{k}^{\prime}z^{k}\bigg\vert = \mathcal{O}(n^{c-\nu})\bigg]=1-o(1).
\]
\end{lemma}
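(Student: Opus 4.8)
The plan is to bound the difference term-by-term using the pointwise estimate $\varepsilon_k := |V_k' - V_k| \leq n^{-\nu} k^{-\nu k}$ from Lemma \ref{41}, which holds simultaneously for all $k \in \{0,1,\dots,t\}$ with probability $1-o(1)$ (intersecting the finitely many high-probability events, one for each $k$, and noting $t = \ln n + \ln\frac1\varepsilon$ so there are only polylogarithmically many of them). On that event I would write
\[
\left| \sum_{k=0}^{t} V_k z^k - \sum_{k=0}^{t} V_k' z^k \right| \leq \sum_{k=0}^{t} \varepsilon_k |z|^k \leq n^{-\nu} \sum_{k=0}^{t} k^{-\nu k} |z|^k,
\]
with the $k=0$ term contributing $\varepsilon_0 = 0$ since $V_0 = V_0' \equiv 1$, and similarly the $k=1$ term vanishing since $V_1 = V_1'$.

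Next I would control the sum $\sum_{k} k^{-\nu k} |z|^k$. Using $|z| \leq \left(\frac{\ln n}{(d-1)^6}\right)^{c}$ from \eqref{aa}, the ratio of consecutive terms is
\[
\frac{(k+1)^{-\nu(k+1)} |z|^{k+1}}{k^{-\nu k}|z|^k} = \frac{|z|}{(k+1)^{\nu}}\left(1+\frac1k\right)^{-\nu k} \leq \frac{|z|}{k^{\nu}},
\]
which is at most $\frac12$ once $k^{\nu} \geq 2|z|$, i.e. for all $k$ beyond some absolute threshold independent of $n$ (here one uses $\nu > 0$ fixed and $|z|$ only polylogarithmic, so $k^\nu$ eventually dominates — indeed for $k \geq 2$ this already holds for large $n$ since $|z| = (\ln n / (d-1)^6)^c$ grows slower than $2^{1/\nu}$... more carefully one treats the first finitely many small $k$ separately and geometrically dominates the tail). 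Hence the series $\sum_{k\geq 2} k^{-\nu k}|z|^k$ is bounded by a constant times its first nonzero term $|z|^2 \cdot 2^{-2\nu}$, which is $\mathcal{O}(|z|^2) = \mathcal{O}((\ln n)^{2c})$, a factor absorbed into $n^{c-\nu}$ as a lower-order polylogarithmic correction since $c - \nu < 0$. Combining, the whole difference is $\mathcal{O}(n^{-\nu} \cdot \mathrm{polylog}(n)) = \mathcal{O}(n^{c-\nu})$ (any positive power loss from the polylog is swallowed by comparing $-\nu$ to $c-\nu$, using $c > 0$).

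The main obstacle is purely bookkeeping: making sure that the exceptional set has probability $o(1)$ even though Lemma \ref{41} provides a separate threshold $n_k$ for each $k$. Since $k$ ranges over $\{0,1,\dots,t\}$ and $t = \mathcal{O}(\ln \frac{n}{\varepsilon})$, one takes $n$ larger than $\max_{k \leq t} n_k$; this is legitimate because for each fixed $k$ the statement "with high probability $\varepsilon_k \leq n^{-\nu}k^{-\nu k}$ for all $n > n_k$" can be upgraded to a uniform-in-$k$ statement exactly as in the matrix case of \cite{pin}, and the union bound over polylogarithmically many events still leaves failure probability $o(1)$. Once the uniform event is secured, the analytic estimate on $\sum k^{-\nu k}|z|^k$ is routine and the conclusion follows.
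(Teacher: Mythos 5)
Your skeleton is the right one and matches the argument the paper itself relies on (the paper gives no independent proof here; it defers to Lemma 7.2 of \cite{pin}): bound the difference by $\sum_{k}\varepsilon_k|z|^k$ via Lemma \ref{41} and then estimate the series, with the $k=0,1$ terms vanishing since $V_0'=V_0$ and $V_1'=V_1$. The step that fails as written is your treatment of $\sum_{k\ge 2}k^{-\nu k}|z|^k$. You assert the ratio $|z|/k^{\nu}$ is at most $\tfrac12$ already for $k\ge 2$ ``for large $n$ since $|z|$ grows slower than $2^{1/\nu}$'', and conclude the series is a constant times its first term, i.e.\ $\mathcal{O}(|z|^2)=\mathcal{O}((\ln n)^{2c})$. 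This is not correct: for fixed $d\ge 3$, relation \eqref{aa} allows $|z|=(\ln n/(d-1)^6)^{c}\to\infty$, so for all $k$ below roughly $(2|z|)^{1/\nu}$ (a threshold that grows with $n$, not an absolute constant) the ratio exceeds $1$ and the terms \emph{increase}. The maximum of $k^{-\nu k}|z|^{k}$ is attained near $k\approx e^{-1}|z|^{1/\nu}$ and equals $\exp(\nu e^{-1}|z|^{1/\nu})$, which is superpolylogarithmic in $n$; so the sum is not $\mathcal{O}(|z|^{2})$, and ``treating finitely many small $k$ separately'' does not rescue the claimed bound, since the number of such $k$ and their contribution both grow with $n$.

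The repair is easy and is precisely where $c<\nu$ enters: since $|z|^{1/\nu}\le (\ln n)^{c/\nu}$ with $c/\nu<1$, the maximal term is $\exp\big(\nu e^{-1}(\ln n)^{c/\nu}\big)=n^{o(1)}$, and there are only $t+1=\mathcal{O}(\ln\frac{n}{\varepsilon})$ terms, so $\sum_{k=0}^{t}k^{-\nu k}|z|^{k}=n^{o(1)}\le n^{c}$ for $n$ large; multiplying by $n^{-\nu}$ gives the stated $\mathcal{O}(n^{c-\nu})$. With that substitution your proof is sound. Your concern about the thresholds $n_k$ is not a real obstacle and needs no union bound: the estimate of Lemma \ref{41} is obtained deterministically by induction on $k$ on the single high-probability event supplied by Lemmas \ref{m22}, \ref{01} and \ref{m31}, exactly as in \cite{pin}, so the bound holds simultaneously for all $k\le t$ on that one event.
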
 
\begin{proof}
The proof is similar to that of its matrix case, except that $V^{\prime}_{k}$ is an asymptotic approximation of $V_{k}$ associated with tensor $A$. Thus, we refer the reader to the proof of lemma $7.2$ in \cite{pin} for the details. 
\end{proof}

\section{The algorithm}
In this section we prove our main theorem, and we present an algorithm for the permanent of a random tensor $ R \sim \mathscr{T}_{d,n,\mu} $ with $ \vert \mu \vert \geq (\frac{\ln n}{(d-1)^{6}})^{-c} $. We refer the interested reader to \cite{pin} and \cite{alg} for study about approximation algorithms and more information on complexity of algorithms.
\begin{theorem}\label{22}
For any constant $ c \in (0,\frac{1}{8}) $, there exists a deterministic quasi-polynomial time algorithm $ \mathscr{P} $, where the tensor $ R $ sampled from $ \mathscr{T}_{d,n,\mu} $ and $d \in \mathcal{O}(polylog(n))  $, for $\vert \mu \vert  \geq (\frac{\ln n}{(d-1)^{6}})^{-c} $, and a real number $\varepsilon \in (0,1)$, are considered as inputs. The algorithm computes a complex number $ \mathscr{P}(R,\varepsilon) $ that approximates the permanent of the tensor $R$ such that
\[
P\bigg( \bigg \vert 1-\dfrac{\mathscr{P}(R,\varepsilon)}{per(R)} \bigg\vert \leq \varepsilon \bigg)\geq 1-o(1),
\]
where the probability is taken over the random tensor $ R $.
\end{theorem}
\begin{proof}
We introduce a tensor $ X=J+zA $, where $ J $ is a tensor whose all entries are equal to 1, $ A $ is a random tensor with i.i.d entries sampled from $\mathcal{D}$, and $ z $ is a complex variable. We know that $ per(X) $ is a summation of $ (n!)^{d-1} $ products. Thus, we consider the normalized permanent $ \dfrac{per(X)}{(n!)^{d-1}} $. By Lemma \ref{11}, we know that the normalized permanent can be written as $ \sum\limits_{k=0}^{n} a_{k}z^{k} $, where
\begin{eqnarray*}
a_{k}=\dfrac{1}{(n^{\underline{k}})^{d-1}} \sum\limits_{B\subseteq_{k} A}per(B).
\end{eqnarray*}
To approximate $ \dfrac{per(X)}{(n!)^{d-1}} $, our algorithm computes the first $t+1$ coefficients $a_{0},a_{1},\dots,a_{t}$ and outputs the number $ \sum\limits_{k=0}^{t} a_{k}z^{k} $. For such a choice of $t$, the algorithm has time complexity $ t^{d-1} (\binom{n}{t})^{d} (t!)^{d-1}=\mathcal{O}(n^{dt}) $, which is quasi-polynomial. Now, we show that $ \sum\limits_{k=1}^{t}a_{k}z^{k} $ is a $\Theta(\varepsilon)$ approximation of $ \dfrac{per(X)}{(n!)^{d-1}}$ with high probability.

 By Lemma \ref{12}, 
\begin{eqnarray}\label{13}
\bigg \vert \sum\limits_{k=t+1}^{n} a_{k} z^{k} \bigg \vert \leq n^{-\gamma} \varepsilon,
\end{eqnarray}
with high probability (that is, this is small in the absolute sense). To show that relative error is small, we need to prove that
\begin{eqnarray}\label{14}
\bigg \vert \sum\limits_{k=0}^{t} a_{k}z^{k}\bigg \vert = \Omega(n^{-\gamma}),
\end{eqnarray}
with high probability. As the constant in $ \Theta(n^{-\gamma}) $ does not depend on $ \varepsilon $, relations (\ref{13}) and (\ref{14}) prove the theorem. To prove relation (\ref{14}), we follow the steps below.
By Lemma \ref{16}, with high probability we obtain
\[
 \bigg \vert \sum_{k=0}^{t} a_{k}z^{k} - \sum_{k=0}^{t} V_{k}z^{k} \bigg\vert \leq n^{-\beta} = o(n^{-\gamma}).
  \]
The difference between $n^{-\beta}$ and $ n^{-\gamma} $ is not important. 
\\
By Lemma \ref{42}, we know that 
\[
\bigg \vert \sum\limits_{k=0}^{t} V_{k}z^{k}-\sum\limits_{k=0}^{t} V_{k}^{\prime}z^{k}\bigg \vert = \mathscr{O}(n^{c-\nu})=o(n^{-\gamma}).
\]
In other words, we can estimate $ \sum\limits_{k=0}^{t} V_{k}z^{k} $ by $ \sum\limits_{k=0}^{t} V_{k}^{\prime}z^{k} $. Lemma \ref{17} allows us to conclude that we can estimate $\sum\limits_{k=0}^{t} V_{k}^{\prime}z^{k} $ by $ \sum\limits_{k=0}^{\infty} V_{k}^{\prime}z^{k} $. Now, it is enough to give an $ \Omega(n^{-\gamma}) $ lower bound for $ \bigg\vert \sum\limits_{k=0}^{\infty} V_{k}^{\prime}z^{k} \bigg\vert$.
It follows from \eqref{m33} that $ \sum\limits_{k=0}^{\infty} V_{k}^{\prime}z^{k} $ is simply $ e^{V_{1}z-\frac{\xi z^{2}}{2}}$, where $V_{1}$ is the normalized average of all entries of $A$, and $\vert V_{1}\vert$ is small with high probability by Lemma \ref{m31}. By Lemma \ref{18}, we conclude that with high probability,
\[
\bigg\vert e^{V_{1}z-\frac{\xi z^{2}}{2}} \bigg\vert \geq n^{-\gamma}.
\]
This completes the proof.
\end{proof}
%Notice that if $d $ is in $\Theta(\sqrt{n})$ the running time of the algorithm becomes exponential. 
Similar to the matrix case, we conclude from Theorem \ref{22} that, if we relax the approximation requirement, we can compute $V_{1}$ and estimate $ per(X) $ by $(n!)^{d-1} e^{V_{1}z-\frac{\xi z^{2}}{2}}$. Similar to the case of matrices, we achieve an approximation guarantee of the form below. 
\begin{corollary}
For any constant $ c \in (0,\frac{1}{8}) $ and $ 0< \rho<\frac{1}{8}-c $, there exists a deterministic polynomial time algorithm $ \mathcal{P} $ such that, given a tensor $R$ sampled from $ \mathscr{T}_{d,n,\mu} $, where $d \in \mathcal{O}(polylog(n))  $, for $\vert \mu \vert  \geq (\frac{\ln n}{(d-1)^{6}})^{-c} $, the algorithm computes a complex number $ \mathcal{P}(R) $ that approximates $ per(R) $ on average, in the sense that
\[
P\bigg( \bigg \vert 1-\dfrac{\mathcal{P}(R)}{per(R)} \bigg\vert \leq n^{-\rho} \bigg)= 1-o(1),
\]
where the probability is taken over the random tensor $ R $.
\end{corollary}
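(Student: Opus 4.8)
The plan is to reuse the whole chain of approximations built for Theorem~\ref{22}, but to stop at its terminal object $e^{V_{1}z-\frac{\xi z^{2}}{2}}$, which --- unlike the truncated coefficient sum $\sum_{k=0}^{t}a_{k}z^{k}$ of the quasi-polynomial algorithm --- is evaluable in time polynomial in the input size. Writing $z=\tfrac1\mu$ and $X=J+zA$, one has $R=X/z$ entrywise, hence $a_{i_{1}\dots i_{d}}=r_{i_{1}\dots i_{d}}-\mu$ since $1/z=\mu$, and, because every monomial of the tensor permanent has exactly $n$ factors, $per(R)=z^{-n}per(X)$. The algorithm $\mathcal{P}$ therefore computes
\[
V_{1}=\frac{1}{n^{d-1}}\sum_{i_{1},\dots,i_{d}=1}^{n}\bigl(r_{i_{1}\dots i_{d}}-\mu\bigr),
\]
costing $O(n^{d})$ arithmetic operations, i.e.\ time polynomial in the $n^{d}$-entry input, and outputs
\[
\mathcal{P}(R):=z^{-n}(n!)^{d-1}\,e^{V_{1}z-\frac{\xi z^{2}}{2}},
\]
$\xi$ being the quasi-variance of $\mathcal{D}$, a known constant of the distribution family (as is $\mu$, exactly as in Theorem~\ref{22}). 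By Lemma~\ref{11}, $per(X)/(n!)^{d-1}=\sum_{k=0}^{n}a_{k}z^{k}$, so
\[
1-\frac{\mathcal{P}(R)}{per(R)}=\frac{\sum_{k=0}^{n}a_{k}z^{k}-e^{V_{1}z-\frac{\xi z^{2}}{2}}}{\sum_{k=0}^{n}a_{k}z^{k}},
\]
and everything reduces to bounding the numerator above and the denominator below.

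First I would fix the free parameters of \eqref{aa}. Given the target $\rho\in(0,\tfrac18-c)$, choose $\rho_{0}$ with $\rho<\rho_{0}<\tfrac18-c$, then $\nu\in(c,\tfrac18)$ with $\nu-c>\rho_{0}$, a $\beta\in(\rho_{0},\tfrac12)$, and a $\gamma>0$ with $\gamma<\min\{\nu-c-\rho_{0},\ \beta-\rho_{0}\}$ (which also secures $0<\gamma<\nu-c$ and $0<\gamma<\beta$); finally apply Lemma~\ref{12} with the analysis-only value $\varepsilon=n^{-\rho_{0}}$, so $t=(1+\rho_{0})\ln n=O(\ln n)$ --- note $\mathcal{P}$ itself uses neither $t$ nor $\varepsilon$. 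Then split
\[
\Bigl|\sum_{k=0}^{n}a_{k}z^{k}-e^{V_{1}z-\frac{\xi z^{2}}{2}}\Bigr|\le\Bigl|\sum_{k=t+1}^{n}a_{k}z^{k}\Bigr|+\Bigl|\sum_{k=0}^{t}(a_{k}-V_{k})z^{k}\Bigr|+\Bigl|\sum_{k=0}^{t}(V_{k}-V'_{k})z^{k}\Bigr|+\Bigl|\sum_{k=t+1}^{\infty}V'_{k}z^{k}\Bigr|,
\]
using $\sum_{k=0}^{\infty}V'_{k}z^{k}=e^{V_{1}z-\frac{\xi z^{2}}{2}}$ from \eqref{m33} (licit with probability $1-o(1)$ by Lemma~\ref{m31}, which gives $|V_{1}|\le\theta=\ln\ln n$). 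With probability $1-o(1)$ the four terms are, by Lemmas~\ref{12}, \ref{16}, \ref{42}, \ref{17} respectively, at most $n^{-\gamma-\rho_{0}}$, $n^{-\beta}$, $\mathcal{O}(n^{c-\nu})$, $n^{-\omega(1)}$, each of which is $\mathcal{O}(n^{-\gamma-\rho_{0}})$ by the parameter choices; hence the numerator is $\mathcal{O}(n^{-\gamma-\rho_{0}})=o(n^{-\gamma})$. Since moreover $|e^{V_{1}z-\frac{\xi z^{2}}{2}}|\ge n^{-\gamma}$ with probability $1-o(1)$ by Lemma~\ref{18}, the denominator satisfies $|\sum_{k=0}^{n}a_{k}z^{k}|\ge|e^{V_{1}z-\frac{\xi z^{2}}{2}}|-o(n^{-\gamma})=\Omega(n^{-\gamma})$. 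Dividing gives $|1-\mathcal{P}(R)/per(R)|=\mathcal{O}(n^{-\rho_{0}})\le n^{-\rho}$ for large $n$, and a union bound over the finitely many exceptional events (all $o(1)$ for $d\in\mathcal{O}(\mathrm{polylog}(n))$) yields the stated $1-o(1)$ probability.

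All the hard analytic work --- the Taylor-truncation/zero-avoidance estimates, the moment recursions for $V_{k},V'_{k}$, and the Berry--Esseen-type bound of Lemma~\ref{m24} --- is already carried out in Sections~4--7, so the genuine content here is only: (i) observing that the endpoint $e^{V_{1}z-\xi z^{2}/2}$ of that chain is computable in time polynomial in $n^{d}$ (meaningful precisely because a tensor input has size $n^{d}$, so Theorem~\ref{22}'s $n^{\mathcal{O}(d\ln(n/\varepsilon))}$ is merely quasi-polynomial in the input), and (ii) checking that the error budget closes at a fixed polynomial rate rather than at an input $\varepsilon$. I expect the only delicate point --- the ``main obstacle'' --- to be the exponent bookkeeping: one must confirm that the region $\{c<\nu<\tfrac18,\ \rho_{0}<\beta<\tfrac12,\ 0<\gamma<\min(\nu-c-\rho_{0},\beta-\rho_{0})\}$ is nonempty exactly when $\rho_{0}<\tfrac18-c$, which is what forces the hypothesis $\rho<\tfrac18-c$, and that all the invoked probabilistic estimates hold simultaneously in the regime $d\in\mathcal{O}(\mathrm{polylog}(n))$.
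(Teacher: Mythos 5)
Your proposal is correct and matches the paper's intended argument: the paper likewise obtains this corollary by computing $V_{1}$ and outputting $(n!)^{d-1}e^{V_{1}z-\frac{\xi z^{2}}{2}}$ (rescaled by $z^{-n}$), with the error controlled by the same chain of Lemmas \ref{12}, \ref{16}, \ref{42}, \ref{17} and the lower bound of Lemma \ref{18} used in Theorem \ref{22}. Your write-up in fact supplies more of the exponent bookkeeping than the paper, which only sketches this step by deferring to the matrix case in \cite{pin}.
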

Similar to the matrix case, this algorithm provides a very good approximation for large $n$. But for small $n$, this is not a good algorithm. Also, we can convert this algorithm into a PTAS whose running time is polynomial in $n$, but possibly exponential in $ \frac{1}{\varepsilon}$. Similar to the matrix case, we fix a constant $0< \rho <\frac{1}{8}-c$ and for a given $\varepsilon$, if $\varepsilon > n^{-\rho}$ we use this algorithm, and otherwise we do not use it. Also, similar to the matrix case, we obtain the following corollary.
\begin{corollary}
For any constant $ c \in (0,\frac{1}{8}) $, there exists a deterministic PTAS algorithm that approximates $per(R)$ for a random tensor $R$ sampled from $ \mathscr{T}_{d,n,\mu} $, where $d \in \mathcal{O}(polylog(n))  $, for $\vert \mu \vert  \geq (\frac{\ln n}{(d-1)^{6}})^{-c} $.
\end{corollary}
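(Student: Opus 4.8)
The plan is to bootstrap the PTAS from two facts already in hand: the deterministic polynomial-time average-case approximation of the previous corollary, and the trivial observation that an exact evaluation of $per(R)$ costs only a bounded amount once $n$ is itself bounded in terms of $\varepsilon$ alone. Theorem~\ref{22} is not needed directly in this argument.

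First I would fix a constant $\rho$ with $0<\rho<\frac{1}{8}-c$ and let $\mathcal{P}$ be the deterministic polynomial-time algorithm furnished by the previous corollary, so that $P(|1-\mathcal{P}(R)/per(R)|\le n^{-\rho})=1-o(1)$. The PTAS, on input $(R,\varepsilon)$, branches on whether $\varepsilon>n^{-\rho}$. If $\varepsilon>n^{-\rho}$, it outputs $\mathcal{P}(R)$; since $n^{-\rho}<\varepsilon$, the event $|1-\mathcal{P}(R)/per(R)|\le n^{-\rho}$ forces $|1-\mathcal{P}(R)/per(R)|\le\varepsilon$, so the desired guarantee holds with probability $1-o(1)$, and the running time is polynomial in $n$. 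If instead $\varepsilon\le n^{-\rho}$, then $n\le\varepsilon^{-1/\rho}$, so $n$ — and hence $d\in\mathcal{O}(polylog(n))$ — is bounded by a quantity depending only on $\varepsilon$; in this branch the algorithm computes $per(R)$ exactly, either directly from the definition as a sum of $(n!)^{d-1}$ products or via a Ryser-type inclusion--exclusion formula for tensors. The cost of this exact evaluation is bounded by a function of $\varepsilon$ alone, hence is $O(1)$ in $n$ (this constant may blow up exponentially in $1/\varepsilon$, which a PTAS permits), and the output equals $per(R)$, so the approximation holds with probability $1$.

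Combining the two branches, for every fixed $\varepsilon\in(0,1)$ the resulting algorithm is deterministic, runs in time polynomial in $n$, and with probability $1-o(1)$ its output lies within relative error $\varepsilon$ of $per(R)$, which is precisely the claimed PTAS. I expect no genuine obstacle here; the only points needing a word of care are that in the small-$\varepsilon$ branch the exact-evaluation cost must be seen to depend on $\varepsilon$ only (it does, since $n\le\varepsilon^{-1/\rho}$ there), and that the overall failure probability is still $o(1)$ — which is immediate, as the exact branch never errs while the approximate branch already has failure probability $o(1)$ by the previous corollary.
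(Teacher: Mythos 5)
Your proposal is correct and follows essentially the same route as the paper: fix $\rho\in(0,\tfrac18-c)$, use the polynomial-time average-case algorithm of the preceding corollary when $\varepsilon>n^{-\rho}$, and when $\varepsilon\le n^{-\rho}$ exploit that $n\le\varepsilon^{-1/\rho}$ (hence $d$ too is bounded) so the permanent can be computed by an expensive method whose cost depends on $\varepsilon$ alone, exactly the case split the paper invokes "similar to the matrix case." The only difference is cosmetic — you use brute-force exact evaluation in the small-$\varepsilon$ branch rather than leaving that branch implicit — and this does not change the argument.
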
 

%%%%%%%%%%%%%%%%%%%%%%%%%%%%%%%%%%%%%%%%%%%%%%%%%%%%%%%%%%%%%%%%
%%%%%%%%%%%%%%%%%%%%%%%%%%%%%%%%%%%%%%%%%%%%%%%%%%%%%%%%%%%%%%%%
%%%%%%%%%%%%%%%%%%%%%%%%%%%%%%%%%%%%%%%%%%%%%%%%%%%%%%%%%%%%%%%%
%%%%%%%%%%%%%%%%%%%%%%%%%%%%%%%%%%%%%%%%%%%%%%%%%%%%%%%%%%%%%%%%%
%%%%%%%%%%%%%%%%%%%%%%%%%%%%%%%%%%%%%%%%%%%%%%%%%%%%%%%%%%%%%%%%%
%%%%%%%%%%%%%%%%%%%%%%%%%%%%%%%%%%%%%%%%%%%%%%%%%%%%%%%%%%%%%%%%%
%%%%%%%%%%%%%%%%%%%%%%%%%%%%%%%%%%%%%%%%%%%%%%%%%%%%%%%%%%%%%%%%%

\section{Conclusion}
 In recent years, many researchers tried to approximate the permanents of matrices sampled from certain distributions \cite{abm}, \cite{birv2}, \cite{eld}. Having in mind the applications of the permanents of random matrices with zero or vanishing mean in quantum supremacy and Boson Sampling, Zhengfeng Ji, Zhihan Jin and Pinyan Lu presented an algorithm to approximate the permanent of a random matrix with module of mean at least equal to $\frac{1}{polylog(n)}$ \cite{pin}. In this paper, we extended this algorithm to tensors, and we designed a deterministic quasi-polynomial time algorithm and a PTAS to compute the permanent of a complex random tensor for which $d \in \mathcal{O}(polylog(n))  $, and module of mean was at least $\frac{1}{polylog(n)}$. 
 
% As an idea for further research, interested readers can present faster algorithms to compute the permanent of tensors.\\
 
% \textbf{Acknowledgements} The authors would like to thank the editor for his encouragement to publish this addendum and for his helpful comments.
% \\
% \textbf{Author contributions} The main idea of this article was presented by M. Nobakht-Kooshkghazi and H. Afshin. M. Nobakht-Kooshkghazi %wrote the main manuscript text. H. Afshin reviewed the manuscript.
% \\
 %\\
% \textbf{Funding} The authors received no funding for the research, authorship, and/or publication of this article.
%\\
%\textbf{Declarations}\\
%\textbf{Conflict of interest} The authors declare no competing interests.
%\\

%\textbf{Data availability:}
 %No data was used for the research described in the article.

%-------------------------------------------------------

\end{document}